\colorlet{myGreen}{green!50!black}
\colorlet{myLightgreen}{green}
\colorlet{myRed}{red!90!black}
\definecolor{myBlue}{rgb}{0.25, 0.0, 1.0}
\definecolor{myLightBlue}{rgb}{0.39, 0.58, 0.93}
\colorlet{myViolet}{myBlue!55!myRed}
\definecolor{myOrange}{rgb}{1.0, 0.66, 0.07}
\definecolor{CornflowerBlue}{rgb}{0.39, 0.58, 0.93}
\definecolor{DarkGoldenrod}{rgb}{0.72, 0.53, 0.04}
\definecolor{BritishRacingGreen}{rgb}{0.0, 0.26, 0.15}
\definecolor{DarkMagenta}{rgb}{0.55, 0.0, 0.55}
\definecolor{AO}{rgb}{0.0, 0.5, 0.0}
\definecolor{BostonUniversityRed}{rgb}{0.8, 0.0, 0.0}
\definecolor{myRed}{rgb}{0.8, 0.0, 0.0}
\definecolor{DarkMidnightBlue}{rgb}{0.0, 0.2, 0.4}
\definecolor{DarkTangerine}{rgb}{1.0, 0.66, 0.07}
\definecolor{AppleGreen}{rgb}{0.55, 0.71, 0.0}
\definecolor{BrightUbe}{rgb}{0.82, 0.62, 0.91}
\definecolor{Amethyst}{rgb}{0.6, 0.4, 0.8}
\definecolor{DarkGray}{rgb}{0.52, 0.52, 0.51}
\definecolor{Gray}{rgb}{0.66, 0.66, 0.66}
\definecolor{BananaYellow}{rgb}{1.0, 0.88, 0.21}
\definecolor{Amber}{rgb}{1.0, 0.75, 0.0}
\definecolor{LightGray}{rgb}{0.83, 0.83, 0.83}
\definecolor{PrincetonOrange}{rgb}{1.0, 0.56, 0.0}
\definecolor{DeepCarrotOrange}{rgb}{0.91, 0.41, 0.17}
\definecolor{CarrotOrange}{rgb}{0.93, 0.57, 0.13}
\definecolor{MidnightBlue}{rgb}{0.1, 0.1, 0.44}
\definecolor{Magenta}{rgb}{0.50, 0.0, 0.50}
\definecolor{BrightPink}{rgb}{1.0, 0.0, 0.5}
\definecolor{BrilliantRose}{rgb}{1.0, 0.33, 0.64}
\definecolor{ChromeYellow}{rgb}{1.0, 0.65, 0.0}
\definecolor{HotMagenta}{rgb}{1.0, 0.11, 0.81}
\definecolor{Amethyst}{rgb}{0.6, 0.4, 0.8}
\newtheorem{environment}{Environment}[section]
\newtheorem{lemma}[environment]{Lemma}
\crefname{lemma}{lemma}{lemmata}
\newtheorem{observation}[environment]{Observation}
\crefname{observation}{observation}{observations}
\newtheorem{theorem}[environment]{Theorem}
\crefname{theorem}{theorem}{Theorems}
\crefname{section}{section}{sections}
\newtheorem{definition}[environment]{Definition}
\crefname{definition}{definition}{definitions}
\newtheorem{claim}{Claim}[environment]
\crefname{claim}{claim}{claims}
\newtheorem{conjecture}[environment]{Conjecture}
\crefname{conjecture}{conjecture}{Conjectures}
\newtheorem{corollary}[environment]{Corollary}
\crefname{corollary}{corollary}{corollaries}
\crefname{figure}{figure}{figures}
\DeclareMathOperator{\atw}{\alpha\mbox{-}\mathsf{tw}}
\title{Excluding a Ladder as an Induced Minor in Graphs Without Induced Stars}
\author{\medskip Mujin Choi\thanks{Supported by the Institute for Basic Science (IBS-R029-C1)}$\hspace{0.19cm} ^{1,}$$^{2}$ and Sebastian Wiederrecht\thanks{Sebastian Wiederrecht's research was partially supported by the Institute for Basic Science (IBS-R029-C1).}$\hspace{0.18cm}^{3}$\\
        \small $^{1}$Department of Mathematical Sciences, KAIST, Daejeon, Korea \\
        \small $^{2}$Discrete Mathematics Group, Institute for Basic Science (IBS), Daejeon, South Korea \\
        \small $^{3}$School of Computing, KAIST, Daejeon, Korea \\
        \small Email: \href{mailto:mujinchoi@kaist.ac.kr}{mujinchoi@kaist.ac.kr}, \href{mailto:wiederrecht@kaist.ac.kr}{wiederrecht@kaist.ac.kr} \\
}
\date{\today}
\begin{document}

\maketitle

\begin{abstract}

A $k$-ladder is the graph obtained from two disjoint paths, each with $k$ vertices, by joining the $i$th vertices of both paths with an edge for each $i\in\{ 1,\ldots,k\}$.
In this paper, we show that for all positive integers $k$ and $d$, the class of all $K_{1,d}$-free graphs excluding the $k$-ladder as an induced minor has a bounded tree-independence number.

We further show that our method implies a number of known results:
We improve the bound on the tree-independence number for the class of $K_{1,d}$-free graphs not containing a wheel as an induced minor given by Choi, Hilaire, Milanič, and Wiederrecht \cite{choi2025excludinginducedwheelminor}.
Furthermore, we show that the class of $K_{1,d}$-free graphs not containing a theta or a prism, whose paths have length at least $k$, as an induced subgraph has bounded tree-independence number.
This improves a result by Chudnovsky, Hajebi, and Trotignon \cite{chudnovsky2024treeindependencenumberiii}.
Finally, we extend the induced Erd\H{o}s-P{\'o}sa result of Ahn, Gollin, Huynh, and Kwon in $K_{1,d}$-free graphs from long induced cycles to any graph that is an induced minor of the $k$-ladder where every edge is subdivided exactly once.
\end{abstract}

\section{Introduction}\label{sec:Intro}
The Grid Theorem by Robertson and Seymour is a fundamental theorem in modern structural graph theory.
The theorem states that there exists a function $f\colon\mathbb{N}\to\mathbb{N}$ such that for every positive integer $k$, every graph with treewidth at least $f(k)$ contains the $(k\times k)$-grid as a minor.
The theorem has a wide range of applications in both structural algorithmic graph theory \cite{Thilikos2015Bidimensionality,Korhonen2024Minorcontainmentinalmostlineartime}.

Due to its pivotal role in the (algorithmic) theory of graph minors, there have been numerous attempts (successful and unsuccessful) to find similar theorems for other minor-like relations combined with an appropriate width parameter.
In \cite{Directedgridtheorem}, Kawarabayashi and Kreutzer proved an analogue of the Grid Theorem for directed graphs where the role of treewidth is taken on by directed treewidth and the containment relation is called butterfly minors.
In \cite{GEELENgridtheoremforvertexminor2023}, Geelen, Kwon, McCarty, and Wollan proved that any grpah of large rankwidth contains a graph called the ``combarability grid'' as a vertex minor.
In \cite{Wollan2015Structure}, Wollan proved that graph of sufficiently large tree-cut width contain a large wall -- a subcubic variant of the grid -- as an immersion.
This theorem was recently strengthened to also hold for strong immersions by Diestel, Jacobs, Knappe, and Wollan \cite{diestel2025gridtheoremstrongimmersions} with a slight tweak to the definition of tree-cut-width.

All of the containment relations above have in common that they allow for the deletion of edges.
Finding appropriate generalizations of the Grid Theorem for the setting of induced subgraphs\footnote{For graphs $G$ and $H$, we say that $H$ is an \emph{induced subgraph} of $G$ if $H$ can be obtained from $G$ by a sequence of vertex deletions.
We say that $G$ is \emph{$H$-free} if $G$ has no induced subgraph isomorphic to $H$.} or \textsl{induced minors} is an ongoing and seemingly much harder challenge.

For graphs $G$ and $H$, we say that $H$ is an \emph{induced minor} of $G$ if $H$ can be obtained from $G$ by a sequence of vertex deletions and edge contractions.
Indeed, for the setting the induced minors, the picture is somewhat hazy.
It appears to be unlikely that there is a unique parametric graph -- such as the grid -- which obstructs large treewidth when considering induced minors (or induced subgraphs), see \cite{alecu2025graphessentiallargetreewidth} for example.
On the other hand, there are multiple good -- and pairwise distinct -- options for defining a variant of treewidth fit for the study of the induced setting  \cite{dallard2024treewidthversuscliquenumber2, nguyen2025coarsetreewidth}.
These alternatives to treewidth appear to exhibit a slightly tamer behavior under certain circumstances.
For this paper we focus on the notion called \textsl{tree-independence number} or \textsl{$\alpha$-treewidth} \cite{dallard2024treewidthversuscliquenumber2,YolovMinorMatchingHypertreeWidth}.

Recently, the problem of understanding hereditary graph classes of large $\alpha$-treewidth has drawn much attention, and several partial results have emerged  \cite{treeindependencenumberievenholediamondpyramid,dallard2024treewidthversuscliquenumber4,chudnovsky2024treeindependencenumberiii}.
In the setting of \textsl{bounded degree}, Korhonen \cite{Korhonen2023InducedGrid} proved that, first of all, there is no qualitative difference between treewidth and $\alpha$-treewidth, and second a strong variant of the Grid Theorem holds for induced minors.
That is, here the only family obstructing treewidth is the family of grids.

\begin{theorem}[\cite{Korhonen2023InducedGrid}]\label{thm:inducedgrid}
There exists a function $f_{\ref{thm:inducedgrid}}(k,d)\in\mathcal{O}(k^{10}+2^{d^5})$ such that for a positive integer $k$, each graph $G$ with $\mathsf{tw}(G)\geq f_{\ref{thm:inducedgrid}}(k,\Delta(G))$ contains the $(k\times k)$-grid as an induced minor, where $\Delta(G)$ is the maximum degree of $G$.
\end{theorem}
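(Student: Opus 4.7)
The plan is to start from the classical Grid Theorem of Robertson and Seymour, which already hands us a large wall (or grid) as an ordinary minor in any graph $G$ of large treewidth. Concretely, I would fix a target size $N$ that is polynomial in $k$ (with some slack to absorb later losses) and invoke the classical theorem with a treewidth threshold of order $N^{\mathcal{O}(1)}$, giving us a grid minor with branch sets $B_{i,j}$ for $(i,j)$ in an $N\times N$ grid. The first observation is that since $\Delta(G)\leq d$, we can trim each branch set to a minimal subtree realising the required connections, so each $|B_{i,j}|$ is bounded by a function of $d$ alone (essentially a BFS tree of depth $\mathcal{O}(d)$ from a pivot vertex). This bounded branch-set size is what will let us count and regularise the ``bad'' edges later.

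The real difficulty is that while adjacent branch sets are connected by an edge, \emph{non-adjacent} branch sets may also be joined by edges (or by short external paths through vertices outside $\bigcup B_{i,j}$), and these shortcuts destroy the induced minor structure. My plan is to classify each branch set $B_{i,j}$ by a ``type'' recording, for each vertex of $B_{i,j}$, the pattern of its neighbourhood in $V(G)\setminus B_{i,j}$ up to distance a small constant, truncated to where it meets other branch sets. Because the degree is at most $d$ and branch sets have bounded size, a crude upper bound on the number of such types is $2^{\mathrm{poly}(d)}$; optimizing the encoding should give the $2^{d^{5}}$ appearing in the statement. This is a finite coloring of the cells of the $N\times N$ grid.

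Next, I would apply a two-dimensional Ramsey/Gallai argument to the coloring: as long as $N$ is sufficiently large compared with $k$ and the number of types, we can extract a sub-grid of side $k^{\mathcal{O}(1)}$ on which every branch set has the same type. On such a monochromatic sub-grid, the pattern of external neighbours is translation-invariant, so the non-adjacent-pair edges form a highly structured ``shift'' pattern. A case analysis on this shift pattern then shows that, after possibly deleting a constant fraction of rows and columns and rerouting branch sets through the leftover ``buffer'' cells, all shortcuts can be eliminated, leaving an induced $k\times k$-grid minor. Balancing the losses at each step (polynomial from the Ramsey extraction, polynomial from the buffer deletion, and the initial polynomial dependence of the Grid Theorem) should give a bound of the claimed form $\mathcal{O}(k^{10}+2^{d^{5}})$.

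The main obstacle, and the technically delicate step, is handling \emph{long external shortcuts}: paths of length more than a constant that leave a branch set, travel through the rest of $G$, and re-enter another non-adjacent branch set. These cannot be read off from a local neighborhood type, so purely local Ramsey regularization does not suffice. My intended remedy is to iterate: after the local clean-up, re-model the resulting sub-grid as a minor in $G$ and redo the classification with respect to a larger radius. Bounded degree ensures that a vertex at distance $r$ from a branch set has at most $d^{r}$ choices, so the type space grows in a controlled way, and with the right choice of $r=\mathcal{O}(\log d)$ or so, a single additional Ramsey extraction absorbs all long shortcuts. Verifying that this iterative scheme terminates without blowing past the $2^{d^{5}}$ budget is where I expect the main effort to lie.
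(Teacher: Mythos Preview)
The paper does not prove this theorem at all: it is quoted verbatim from Korhonen's work and used as a black box, so there is no ``paper's own proof'' to compare your proposal against.

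That said, your sketch contains a genuine gap that would make it fail regardless. You claim that because $\Delta(G)\le d$, each branch set $B_{i,j}$ can be trimmed to a minimal subtree of size bounded by a function of $d$ alone, ``essentially a BFS tree of depth $\mathcal{O}(d)$ from a pivot vertex''. This is false: a minimal subtree spanning four designated connection vertices is a Steiner tree, and its size is governed by the pairwise distances between those vertices, not by the maximum degree. In a bounded-degree graph the branch sets of a grid minor can still be arbitrarily large. Since your entire type-classification and Ramsey scheme hinges on there being only $2^{\mathrm{poly}(d)}$ possible local types of a branch set, and that count came from the bounded-size assumption, the whole argument collapses at this first step.

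Korhonen's actual proof does not attempt to bound branch-set sizes. Instead it works with a large wall as a \emph{subgraph}, exploits the fact that in a bounded-degree graph each vertex of the wall is adjacent to at most $d$ others, and uses a careful row/column selection argument (together with a result on induced subdivisions in sparse graphs) to extract an induced sub-wall. The bounded degree is used to control adjacencies between wall vertices directly, not to shrink minor models.
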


Within the realm of induced substructures, a natural generalization of bounding the maximum degree would be bounding the maximum `induced' degree.
That means, as bounding the maximum degree by $d-1$ can be expressed as forbidding $K_{1,d}$ as a subgraph, we may instead forbid $K_{1,d}$ as an induced subgraph.
In this sense, we focus on $K_{1,d}$-free graphs in this paper.

As hinted at before, in the $K_{1,d}$-free case, treewidth is not an appropriate width parameter to consider.

While, due to \cref{thm:inducedgrid}, in the $K_{1,d}$-\textsl{subgraph} free setting, there is no qualitative difference between treewidth and $\alpha$-treewidth, $K_{1,d}$-free graphs permit arbitrarily large cliques and thus, a distinction between the two parameters emerges.
Hence, it seems reasonable to focus on tree-independence number as it allows for large cliques.
The \emph{tree-independence number} was defined independently by Yolov \cite{YolovMinorMatchingHypertreeWidth}, and Dallard, Milanič, and Štorgel \cite{dallard2021treewidthversuscliquenumber1}.
This modified width parameter measures the independence number of each bag in a tree-decomposition, instead of their cardinality and we denote\footnote{In \cite{dallard2021treewidthversuscliquenumber1}, the authors denoted the tree-independence number by $\mathsf{tree}$-$\alpha$.} it by $\alpha$-$\mathsf{tw}$.
In \cite{dallard2024treewidthversuscliquenumber4}, the authors conjectured an induced analogue of the Grid Theorem for $K_{1,d}$-free graphs, using the tree-independence number.

\begin{conjecture}[\cite{dallard2024treewidthversuscliquenumber4}]\label{conj:K1dfreeinducedgrid}
    There exists a function $f_{\ref{conj:K1dfreeinducedgrid}}:\mathbb{N}^2\rightarrow \mathbb{N}$ such that every $K_{1,d}$-free graph $G$ with $\alpha$-$\mathsf{tw}(G)\geq f_{\ref{conj:K1dfreeinducedgrid}}(k,d)$ contains the $(k\times k)$-grid as an induced minor.
\end{conjecture}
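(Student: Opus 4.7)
The plan is to use this paper's main theorem for the $k$-ladder as the base case of a bootstrapping argument that extends the ladder, one column at a time, into a full $(k\times k)$-grid. Recall that the $k$-ladder is precisely the $(k\times 2)$-grid, so the paper already confirms \cref{conj:K1dfreeinducedgrid} at $c=2$ columns with some function $f(k,d)$; the task is to iterate from $c-1$ columns to $c$ columns while keeping the construction induced.

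Concretely, I would attempt to prove an inductive statement of the following form: there exists $g(k,c,d)$ with $g(k,2,d) = f(k,d)$ such that every $K_{1,d}$-free graph $G$ with $\alpha$-$\mathsf{tw}(G) \geq g(k,c,d)$ contains the $(k\times c)$-grid as an induced minor. For the inductive step I would first pick a parameter $k' \gg k$ and apply the $(c-1)$-column case with $k'$ rows to obtain a model $\mu$ of the $(k'\times(c-1))$-grid $H$ in $G$. The hope is that after removing $V(\mu)$ a large-$\alpha$-$\mathsf{tw}$ pocket survives in a neighborhood of the rightmost column of $H$; invoking the paper's ladder theorem inside that pocket yields a new ladder to glue on. The inflation $k' \gg k$ provides slack so that a Ramsey argument can select $k$ of the $k'$ rows whose rightmost branch sets attach to the new ladder in a consistent, column-compatible pattern.

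The hardest step will be ensuring that the attachment is \emph{induced}: in the induced minor world one cannot freely re-route paths as in the classical minor world, so one must simultaneously guarantee that the new branch sets (i) do not create unintended adjacencies with any old branch set and (ii) do not get partially absorbed into an existing branch set. Here the $K_{1,d}$-free hypothesis should do the work, since it bounds the independence number of any neighborhood and therefore constrains the ways a new ladder can attach to $H$. I expect this induced-attachment lemma, made quantitative and uniform in $k$ and $c$, to be the genuine obstacle; analogues of the cleaning and separation machinery developed in this paper would need to be lifted from the single-ladder setting to the partially built grid setting, where the outer boundary of $H$ effectively plays the role of a highly structured ``anchor''.

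A complementary route, should the induction fail to close cleanly, is to combine the paper's methods with Korhonen's proof of \cref{thm:inducedgrid}~\cite{Korhonen2023InducedGrid}: replace the bounded-degree control used there with the ``bounded induced degree'' control afforded by $K_{1,d}$-freeness, while using the ladder theorem in place of Korhonen's initial degree-based grid extraction to seed the construction. Either way, going from a ladder to a genuine $(k\times k)$-grid while remaining induced is the core new challenge that \cref{conj:K1dfreeinducedgrid} poses beyond the results established in the present paper.
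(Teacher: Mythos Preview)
The statement you are attempting to prove is \cref{conj:K1dfreeinducedgrid}, which the paper does \emph{not} prove; it is stated as an open conjecture, and the paper's contribution is the ladder case (the $(k\times 2)$-grid). Consequently there is no ``paper's own proof'' to compare against, and your proposal should be read as an attack on an open problem rather than a reconstruction of an existing argument.

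Your inductive scheme has a genuine gap already at the very first step, going from $c=2$ to $c=3$. This step is exactly \cref{conj:doubleladder} in the paper's conclusion, which the authors pose as open and for which they explain concrete obstructions. Two of these directly undercut your plan. First, your ``large-$\alpha$-$\mathsf{tw}$ pocket near the rightmost column'' need not exist: after deleting the model $\mu$ of the $(k'\times(c-1))$-grid there is no mechanism forcing high $\alpha$-treewidth to survive \emph{adjacent to the last column} of $\mu$; the paper's separator arguments (\cref{thm:pathandcycle}, \cref{thm:somethingpath}) produce connectivity between two objects, not localized residual complexity next to a prescribed boundary. Second, and more seriously, the ``induced attachment lemma'' you flag as the hardest step is precisely what is missing. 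The authors point out that in $K_{1,d}$-free graphs one loses control over $\alpha(N_2[v])$, so the distance-$1$ cleaning used in \cref{lem:cleanshuffledropeladder} does not upgrade to the distance-$2$ control one would need to keep the new ladder non-adjacent to all but the last column of $\mu$; they also note that the Erd\H{o}s--Szekeres sorting step relies on the rail being a path and does not extend when the anchor is a more complex subgraph such as a ladder. Your sketch names these difficulties but does not resolve them, so as written it is a program rather than a proof.
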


Over the past few years, various set of authors proved the above conjecture for several small planar graphs.
This includes tripods \cite{dallard2024treewidthversuscliquenumber4}, disjoint cycles with length at least $\ell$ \cite{ahn2024coarseerdhosposatheorem}, and wheels \cite{choi2025excludinginducedwheelminor}.
Also in \cite{chudnovsky2024treeindependencenumberiii}, the authors showed that under the same condition, one can find either a theta or a prism as an induced subgraph whenever $\alpha$-treewidth is large.
These results suggest strong evidence toward the ultimate goal: confirming \cref{conj:K1dfreeinducedgrid}.

In a similar manner, recently Gollin, Hatzel, and Wiederrecht \cite{gollin2025graphssimplestructuremaximal} extended \Cref{thm:inducedgrid} to a wider class of $K_{1,d}$-free graphs, but still with a considerably strong sparsity assumption.


\subsection{Our results}
The main result of this paper is that the exclusion of a ladder graph as an induced minor in $K_{1,d}$-free graphs results in bounded tree-independence number.

A \emph{$k$-ladder} is a graph obtained from two disjoint induced paths $P_1,P_2$ with $k$ vertices, by adding an edge between the $i$-th vertices of both paths for each $i\in[k]$.
We call $P_1$ and $P_2$ the \emph{rail paths}, and the edges between them the \emph{rungs}.
The \emph{$k$-skinny ladder} is the graph obtained from the $k$-ladder by subdividing each rung once.
See \cref{fig:ladder} for an illustration.

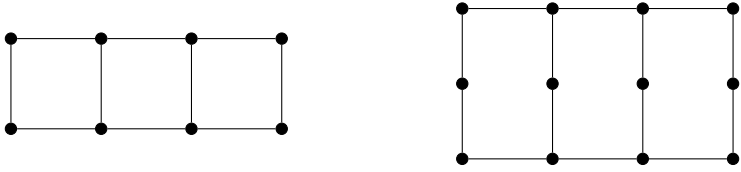
\begin{figure}
    \centering
    \begin{tikzpicture}
        \tikzset{enclosed/.style={draw, circle, inner sep=0pt, minimum size=.15cm,  fill=black}}
        \node[enclosed] (A) at (0,0) [] {};
        \node[enclosed] (B) at (1.2,0) [] {};
        \node[enclosed] (C) at (2.4,0) [] {};
        \node[enclosed] (D) at (3.6,0) [] {};
        \node[enclosed] (E) at (0,1.2) [] {};
        \node[enclosed] (F) at (1.2,1.2) [] {};
        \node[enclosed] (G) at (2.4,1.2) [] {};
        \node[enclosed] (H) at (3.6,1.2) [] {};
        \draw (A) -- (B) node[] () {};
        \draw (B) -- (C) node[] () {};
        \draw (C) -- (D) node[] () {};
        \draw (E) -- (F) node[] () {};
        \draw (F) -- (G) node[] () {};
        \draw (G) -- (H) node[] () {};
        \draw (A) -- (E) node[] () {};
        \draw (B) -- (F) node[] () {};
        \draw (C) -- (G) node[] () {};
        \draw (D) -- (H) node[] () {};
        \node[enclosed] (I) at (6,-0.4) [] {};
        \node[enclosed] (J) at (7.2,-0.4) [] {};
        \node[enclosed] (K) at (8.4,-0.4) [] {};
        \node[enclosed] (L) at (9.6,-0.4) [] {};
        \node[enclosed] (M) at (6,0.6) [] {};
        \node[enclosed] (N) at (7.2,0.6) [] {};
        \node[enclosed] (O) at (8.4,0.6) [] {};
        \node[enclosed] (P) at (9.6,0.6) [] {};
        \node[enclosed] (Q) at (6,1.6) [] {};
        \node[enclosed] (R) at (7.2,1.6) [] {};
        \node[enclosed] (S) at (8.4,1.6) [] {};
        \node[enclosed] (T) at (9.6,1.6) [] {};
        \draw (I) -- (J) node[] () {};
        \draw (J) -- (K) node[] () {};
        \draw (K) -- (L) node[] () {};
        \draw (I) -- (M) node[] () {};
        \draw (J) -- (N) node[] () {};
        \draw (K) -- (O) node[] () {};
        \draw (L) -- (P) node[] () {};
        \draw (Q) -- (M) node[] () {};
        \draw (R) -- (N) node[] () {};
        \draw (S) -- (O) node[] () {};
        \draw (T) -- (P) node[] () {};
        \draw (Q) -- (R) node[] () {};
        \draw (R) -- (S) node[] () {};
        \draw (S) -- (T) node[] () {};
    \end{tikzpicture}
    \caption{A $4$-ladder (left) and a $4$-skinny ladder (right)}
    \label{fig:ladder}
\end{figure}

\begin{theorem}\label{thm:mainresult}
    There exists a function $\tau:\mathbb{N}^2\rightarrow \mathbb{N}$ such that for all positive integers $k$ and $d\geq 2$ and every $K_{1,d}$-free graph $G$, either
    \begin{enumerate}
        \item $G$ contains the $k$-ladder as an induced minor, or
        \item $\alpha$-$\mathsf{tw}(G)\leq \tau(k,d)$.
    \end{enumerate}
\end{theorem}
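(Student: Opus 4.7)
The plan is to convert large tree-independence number into a long induced path together with many parallel attachments along it, and then carve a clean $k$-ladder as an induced minor from the resulting configuration via Ramsey-style pruning. Throughout, the $K_{1,d}$-free hypothesis is what prevents the attachments from collapsing into large stars that resist linearization.

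\textbf{Step 1: A long induced path.} Starting from $\alpha$-$\mathsf{tw}(G)>\tau(k,d)$, the first step is to produce a long induced path $P\subseteq G$. Large tree-independence number yields, by the standard duality, a family of independent vertex sets that cannot be separated by a bag of small independence number. Combined with $K_{1,d}$-freeness, this should force a long geodesic, because otherwise balls of small radius around an independent set would themselves form an $\alpha$-small separator, contradicting large $\alpha$-$\mathsf{tw}$. The length of $P$ has to be chosen as an iterated function of $k$ and $d$ in order to absorb the losses in later steps.

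\textbf{Step 2: Many linearized attachments.} Next I would extract many pairwise disjoint induced subgraphs $Q_1,\dots,Q_N$ of $G-V(P)$, each with at least two neighbors on $P$ and interior anticomplete to $P$. A standard branch-set argument produces such $Q_i$, and $K_{1,d}$-freeness bounds the number of mutually anticomplete attachments at any single vertex of $P$, so a pigeonhole gives uniform control on how the $Q_i$'s attach. Applying an Erd\H{o}s--Szekeres argument to the attachment patterns, one can keep a large subfamily whose attachment feet occur along $P$ as disjoint, non-crossing intervals.

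\textbf{Step 3: Extracting the $k$-ladder.} The crux is then to turn this non-crossing attachment family into a $k$-ladder as an induced minor. Contracting each $Q_i$ to a single rung vertex $r_i$, I would try to build a second rail $P'$ from short induced paths among the $r_i$ in $G-V(P)$. The main obstacle is enforcing the three conditions simultaneously: $P'$ is induced, $V(P')$ is anticomplete to $V(P)$ except at the intended rungs, and the rungs are vertex-disjoint. Each failure mode -- a chord along $P'$, an unwanted rail-to-rail edge, or a pair of overlapping rungs -- exhibits a local independent set on which $K_{1,d}$-freeness forces a bounded-size pigeonhole; iterating this cleanup with a carefully decreasing schedule on the parameters should produce the clean $k$-ladder. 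I expect this step to dominate the growth of $\tau(k,d)$, and getting the nested pigeonhole book-keeping to close up is the delicate combinatorial heart of the proof.
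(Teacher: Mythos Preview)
Your proposal has a genuine gap at Step~2, and it is precisely the gap the paper identifies as the central obstacle.

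You write that ``a standard branch-set argument produces such $Q_i$''---pairwise disjoint induced subgraphs of $G-V(P)$, each with at least two neighbours on $P$ and interior anticomplete to $P$. There is no such standard argument. What you need here is, in effect, an approximate induced Menger theorem: many pairwise non-adjacent connections between two regions of a $K_{1,d}$-free graph whenever no set of small independence number separates them. This is exactly the statement the paper flags as open (Conjecture~1.6) and works around. Nothing in your outline explains where the $Q_i$ come from; large $\alpha$-treewidth gives you a long dominating path and a strong bramble, but it does not by itself hand you many anticomplete attachments along that path. The Erd\H{o}s--Szekeres step you describe is fine once you \emph{have} the attachments, and indeed the paper uses it too (Lemma~4.2), but it is downstream of the real difficulty.

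The paper's workaround is structurally different from your single-path-plus-attachments picture. It first splits the bramble's dominating path into \emph{two} non-adjacent induced paths $P_1,P_2$ whose closed neighbourhoods cannot be separated by a set of small independence number (Theorem~5.2). Then, crucially, it proves a Menger-like statement (Theorem~6.1) in which one side $H$ is fixed but the other side is \emph{allowed to change}: an iterative procedure repeatedly looks for $\ell$ non-adjacent shortest paths from $N[H]$ to the current target; if it fails, the paths it did find become the new target for the next round, and after $\ell+1$ rounds the accumulated paths are stitched into a brand-new rail path $P'$ disjoint from both $P_1$ and $P_2$. So the second rail of the rope ladder is not a subpath of the original $P$, nor is it built from paths ``among the $r_i$'' as you suggest---it is manufactured inside the separating region. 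Your Step~3 plan of connecting contracted rung vertices by short induced paths in $G-V(P)$ has no mechanism guaranteeing those paths exist, are induced, or are anticomplete to $P$; the paper sidesteps this entirely by building the rail from the iterative residue.
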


Actually, we show a stronger statement, that is \cref{thm:mainresult} remains true even if \textsl{$k$-ladder} is replaced by \textsl{$k$-skinny ladder}.

\begin{theorem}\label{thm:themainresult}
    There exists a function $\tau:\mathbb{N}^2\rightarrow \mathbb{N}$ such that for all positive integers $k$ and $d\geq 2$ and every $K_{1,d}$-free graph $G$, either
    \begin{enumerate}
        \item $G$ contains the $k$-skinny ladder as an induced minor, or
        \item $\alpha$-$\mathsf{tw}(G)\leq \tau(k,d)$.
    \end{enumerate}
\end{theorem}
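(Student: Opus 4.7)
The plan is to prove the contrapositive: assuming $G$ is $K_{1,d}$-free with $\alpha$-$\mathsf{tw}(G) > \tau(k,d)$ for a sufficiently large function $\tau$, construct a $k$-skinny ladder as an induced minor of $G$. The overall strategy is a multi-step extraction in the spirit of \cite{choi2025excludinginducedwheelminor,chudnovsky2024treeindependencenumberiii}: from a witness of large tree-independence number, extract a long induced ``spine'' with many attached branches (a ``comb''), then produce a second spine running parallel to it, and finally identify the subdivided rungs.

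First, I would invoke the standard duality (going back to \cite{dallard2021treewidthversuscliquenumber1,dallard2024treewidthversuscliquenumber2}) that large $\alpha$-$\mathsf{tw}$ is equivalent, up to bounded loss, to the existence of a weight function $w$ on $V(G)$ — namely $w(X)=\alpha(G[X])$ — that admits no balanced separator of small weight. A tangle-like argument then produces many pairwise-disjoint connected subgraphs and many induced paths between them that cannot be simultaneously cut by any set of bounded independence number. Combining this with a Ramsey/pigeonhole extraction analogous to those in \cite{choi2025excludinginducedwheelminor,dallard2024treewidthversuscliquenumber4} yields a long induced spine $P$ and many pairwise disjoint induced ``teeth'' $Q_1,\ldots,Q_t$, each attached to $P$ at a single vertex or along a short interval, with well-separated attachment positions.

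Next, I would clean the comb using the $K_{1,d}$-free hypothesis: since no vertex has $d$ pairwise non-adjacent neighbors, an iterated Ramsey argument on inter-tooth adjacencies produces a large sub-comb in which the teeth interact in a highly structured way and in which the adjacencies between the far endpoints $y_1,\ldots,y_s$ of the teeth and the rest of the graph are tightly controlled. The technical heart of the argument is to then route an induced path $P'$ that visits, or is linkable to, the far endpoints in such a way that $P'$ is anticomplete to the interior of $P$ and carries no unwanted chords back to the teeth. The fact that the target is the \emph{skinny} ladder is crucial here: each rung carries a subdivision vertex that decouples adjacency constraints between $P$ and $P'$, so minor stray adjacencies between teeth and the second rail can be absorbed into the rung, whereas the non-subdivided $k$-ladder would require much tighter control.

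To produce $P'$, I would either iterate the obstruction-to-separators argument inside the subgraph that lives beyond the teeth, or run a two-sided version of the extraction yielding teeth on both sides of an initially broader structure. The $K_{1,d}$-free assumption again prevents too many teeth from clustering at a common vertex, so a second induced spine can be extracted whose intersection with the teeth gives the required $k$ rungs; contracting each tooth to its path to $P$ then realizes the $k$-skinny ladder as an induced minor. The main difficulty will be to simultaneously maintain inducedness of both rails, anticompleteness between them, and separation of $k$ good rungs during the pigeonholing, while keeping the Ramsey dependencies tight enough that $\tau(k,d)$ remains a well-defined finite function.
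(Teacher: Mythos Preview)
Your proposal has a genuine gap at its core: the step ``route an induced path $P'$ that visits, or is linkable to, the far endpoints'' is exactly the obstacle the paper is built around, and your two suggested workarounds (iterate the separator argument beyond the teeth, or run a two-sided extraction) do not address it. The problem is that there is no induced Menger theorem for $K_{1,d}$-free graphs, so once you have fixed the teeth of your comb and their far endpoints $y_1,\dots,y_s$, nothing forces an induced path through (or near) a large subset of them that is anticomplete to $P$ and to the tooth interiors. A comb with many teeth is easy to obtain; what is hard is producing the \emph{second} rail, and your plan treats this as a routine routing step when it is the whole difficulty.

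The paper's proof avoids this trap by never committing to a second rail in advance. It first finds two non-adjacent induced paths $P_1,P_2$ whose closed neighborhoods cannot be separated by a set of small independence number (Theorem~\ref{thm:twopaths}, via strong brambles and a dominating path). The key step is then Theorem~\ref{thm:somethingpath}: fix $H=P_1$, and greedily search for many pairwise non-adjacent $N[H]$--$N[P_2]$ paths. If this fails, the paths found so far form a new ``target layer'' $\overline{\mathcal{Q}}^1$, and one repeats the search from $N[H]$ towards $\overline{\mathcal{Q}}^1$ instead of $N[P_2]$; iterating, either some layer alone yields many non-adjacent rung paths to a single $Q_j^{i-1}$ (which then serves as the new rail), or after $\ell+1$ layers one \emph{assembles} the new rail $P'$ by concatenating tails of one path chosen per layer. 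The crucial idea is that the second rail path is allowed to change --- it is discovered by the process, not prescribed in advance --- and this is what substitutes for the missing induced Menger theorem. Only after this does a cleaning step (Lemma~\ref{lem:cleanshuffledropeladder}, via Erd\H{o}s--Szekeres on both rails) turn the resulting shuffled rope ladder into a $k$-rope ladder, which contains the $k$-skinny ladder as an induced minor. Your plan reverses the order (clean a comb first, then seek the second rail) and thereby loses the freedom that makes the construction go through.
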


Indeed, our proof naturally gives \cref{thm:themainresult} due to the way we construct our induced ladder minor.
Within our proof we find two induced paths joined by a collection of pairwise vertex-disjoint and non-adjacent paths.
We then want to apply the Theorem of Erd\H{o}s and Szekeres to order these paths in a way that resembles the $k$-ladder.
As we are considering induced minors, this ``sorting'' process is unable to discard unwanted paths if they are of length $1$.
Hence, we need to guarantee that each paths between the two rail paths has length at least $2$, which then results in us finding the $k$-skinny ladder as an induced minor.

From the perspective of \Cref{conj:K1dfreeinducedgrid}, we may consider the $k$-ladder as the first floor of the $(k\times k)$-grid.
Indeed, we believe that our main theorem can be regarded as a first major starting point toward an induced grid theorem for $K_{1,d}$-free graphs.
Moreover, our proofs are constructive, and with the result of \cite{choi2025excludinginducedwheelminor}, they imply an algorithm that, for a given $K_{1,d}$-free graph $G$, either finds the $k$-ladder as an induced minor, or constructs a tree-decomposition of $G$ whose bags have independence number less than $\tau(k,d)$, in time $|V(G)|^{g(k,d)}$, where $g$ is a computable function.
\medskip

The strengthening from ladders to skinny ladders allows us to deduce simple and unified proofs for several known results in the area as discussed below.

In \cite{choi2025excludinginducedwheelminor}, the authors proved that every $K_{1,d}$-free graph excluding a wheel as an induced minor has bounded tree-independence number where the bound is given by $f_{\ref{thm:forbiddingthewheel}}(\ell,d)\in\mathcal{O}(d\ell^{11}+2^{(\max\{ \ell,d\})^5})$.
By using our method, we are able to considerably improve the function, with much a simpler proof.

Here the $\ell$-wheel, denoted by $W_{\ell}$ is the graph obtained from a cycle $C_{\ell}$ with $\ell$ vertices by introducing a new vertex and joining it to all vertices of $C_{\ell}$.

\begin{restatable}{theorem}{thmfindingwheel}\label{thm:findingwheel}
    For all positive integers $d$ and $\ell\geq 3$, if $G$ is a $K_{1,d}$-free graph with $\alpha$-$\mathsf{tw}(G)\geq8\ell (d-1)+8d-14$, then $G$ contains $W_\ell$ as an induced minor.
\end{restatable}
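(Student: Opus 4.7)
The plan is to mirror the approach used for \cref{thm:themainresult} but to arrange the configuration obtained into an induced cycle with a single ``hub'' attached by non-adjacent paths, rather than two parallel rail paths joined by subdivided rungs. The core reduction is the following observation: to realize $W_\ell$ as an induced minor of $G$, it suffices to exhibit an induced cycle $C$, a vertex $v \notin V(C)$, and a family $Q_1,\ldots,Q_\ell$ of pairwise internally disjoint and pairwise non-adjacent induced paths, with $Q_i$ joining $v$ to some $p_i \in V(C)$, each $Q_i$ disjoint from $V(C)\setminus\{p_i\}$, and the $p_i$'s distinct. Taking the hub branch set $H=\{v\}$ and the rim branch sets $R_i=(V(Q_i)\setminus\{v\})\cup A_i$, where $A_i$ is the arc of $C$ from $p_i$ up to (but excluding) $p_{i+1}$ indexed cyclically, then yields a model of $W_\ell$: the $R_i$'s are disjoint and connected; $H$ is adjacent to each $R_i$ through the edge of $Q_i$ at $v$; consecutive $R_i,R_{i+1}$ are adjacent via $C$; and non-consecutive $R_i,R_j$ are non-adjacent since $C$ is induced and the $Q_i$'s are pairwise non-adjacent.

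The bulk of the work therefore reduces to constructing such a configuration whenever $\alpha\text{-}\mathsf{tw}(G) \geq 8\ell(d-1)+8d-14$ and $G$ is $K_{1,d}$-free. For this I would invoke the same path-discovery machinery that underpins \cref{thm:themainresult}: large tree-independence number in a $K_{1,d}$-free graph produces a long induced path $P$ together with a large collection of pairwise non-adjacent induced cross-paths attached to $P$. The $K_{1,d}$-free hypothesis serves throughout as the pruning tool, converting an arbitrary neighborhood into an independent-like family at the fixed cost of a factor $d-1$. Crucially, in the wheel setting only one of the two Erd\H{o}s--Szekeres applications required by the ladder proof is needed, because one endpoint of each cross-path already coincides with the hub $v$ rather than landing on a second parallel path. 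After pigeonholing, $P$ can then be closed into the induced cycle $C$ through a short induced subpath absorbed into $v$.

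Quantitatively, I expect the threshold $8\ell(d-1)+8d-14$ to arise by combining (i) an additive $O(d)$ overhead for locating and closing an initial long induced path, (ii) a factor $8(d-1)$ per attached cross-path accounting for the repeated pruning that keeps the $Q_i$'s pairwise non-adjacent and internally disjoint from $C$, and (iii) a small final adjustment to ensure that each $Q_i$ meets $C$ at a distinct vertex and has positive length. Summing these contributions over the $\ell$ paths produces the stated bound and delivers the significant quantitative improvement over the $\mathcal{O}(d\ell^{11}+2^{\max\{\ell,d\}^5})$ bound of \cite{choi2025excludinginducedwheelminor}.

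The principal obstacle is ensuring the simultaneous closure of $P$ into an induced cycle and the preservation of the non-adjacency properties among the cross-paths. This should require a careful case analysis in the final step of the path-finding argument, distinguishing whether the two ends of $P$ can be linked through $v$'s neighborhood directly or via an additional contractible induced segment, while maintaining the $K_{1,d}$-free counting at each branching. Once this closure step is handled, the reduction described in the first paragraph immediately produces $W_\ell$ as an induced minor.
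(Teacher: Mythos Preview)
Your reduction in the first paragraph is already fatal. You ask for a single vertex $v$ together with $\ell$ induced paths $Q_1,\dots,Q_\ell$ from $v$ to $C$ that are pairwise non-adjacent except at $v$. In particular the $\ell$ neighbours of $v$ along the $Q_i$'s would be pairwise non-adjacent, i.e.\ an independent set in $N(v)$. Since $G$ is $K_{1,d}$-free this forces $\ell\le d-1$, so the configuration you aim for simply does not exist when $\ell\ge d$. Allowing the hub to be a short induced subpath ``absorbed into $v$'' does not repair this: the same counting against an induced path caps the number of pairwise non-adjacent attachments at $2(d-1)$ (cf.\ \Cref{lem:nbdonpath}). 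The whole Erd\H{o}s--Szekeres / cross-path machinery you sketch is therefore aimed at an unreachable target.

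The paper's argument avoids this entirely and is dramatically simpler than anything resembling the ladder proof. Apply \Cref{thm:pathandcycle} with $\eta=\ell(d-1)$; this directly hands you a non-adjacent pair $(P,C)$ with $C$ already an induced cycle, such that every $N[P]$--$N[C]$ separator has independence number at least $\ell(d-1)$. No closing of a path is required. Now take $K$ to be the component of $G-C$ containing $P$; the set $S=N(C)\cap V(K)$ is such a separator, so $\alpha(S)\ge \ell(d-1)$, and since $G$ is $K_{1,d}$-free this forces $|N(K)\cap V(C)|\ge \ell$. Contracting all of $K$ to a single vertex (this is where the hub is allowed to be huge, not a single vertex of $G$) and then contracting edges of $C$ down to $\ell$ chosen attachment points gives $W_\ell$. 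The stated bound $8\ell(d-1)+8d-14$ is exactly the hypothesis of \Cref{thm:pathandcycle} at $\eta=\ell(d-1)$; there is no per-spoke pruning cost because no non-adjacent spokes are ever constructed.
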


Moreover, we improve the result of Chudnovsky, Hajebi, and Trotignon regarding so-called \textsl{three path configurations} from \cite{chudnovsky2024treeindependencenumberiii} as follows:
A graph $G$ is called a \emph{theta} if it consists of two non-adjacent vertices $v$ and $w$ and three internally disjoint paths $P_1,P_2,P_3$ from $v$ to $w$, each of length at least two, such that there are no edges between the internal vertices of $P_i$ and $P_j$ for all distinct choices for $i,j\in[3]$.
We call $v$ and $w$ the \emph{ends} of $G$.
Let $\mathsf{Theta}$ denote the class of all thetas.
A \emph{$k$-long theta} is a theta such that each of the paths $P_1,P_2,P_3$ has length at least $k$.
For each $k\geq 2$, let $\mathsf{Theta}_k$ denote the class of $k$-long thetas.
See \cref{fig:longthetaprism} for an illustration.

\begin{figure}
    \centering
    \begin{tikzpicture}[scale=1.2]
    \pgfdeclarelayer{background}
    \pgfdeclarelayer{foreground}
    \pgfsetlayers{background,main,foreground}
    \begin{pgfonlayer}{background}
        \pgftext{\includegraphics[width=6cm]{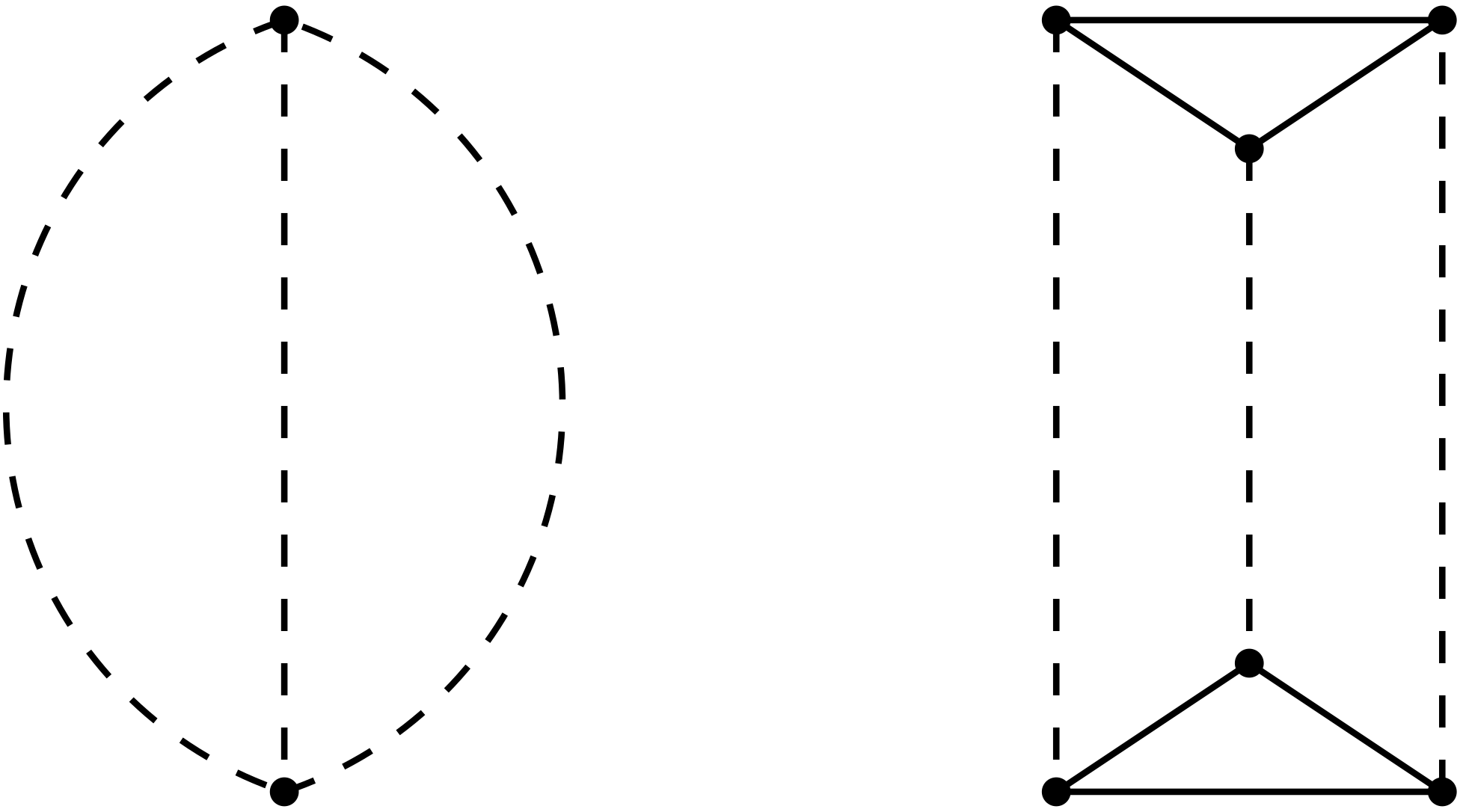}} at (C.center);
    \end{pgfonlayer}{background}
    \begin{pgfonlayer}{main}
        \node[]() at (-1.84,1.83) {$v$};
        \node[]() at (-1.84,-1.83) {$w$};
        \node[]() at (-3.24,0) {$P_1$};
        \node[]() at (-2.12,0) {$P_2$};
        \node[]() at (-1,0) {$P_3$};
        \node[]() at (1.2,1.83) {$v_1$};
        \node[]() at (1.2,-1.83) {$w_1$};
        \node[]() at (3,1.83) {$v_2$};
        \node[]() at (3,-1.83) {$w_2$};
        \node[]() at (2.13,1.33) {$v_3$};
        \node[]() at (2.13,-1.33) {$w_3$};
        \node[]() at (1.05,0) {$P_1$};
        \node[]() at (1.85,0) {$P_2$};
        \node[]() at (2.65,0) {$P_3$};
    \end{pgfonlayer}{main}
    \begin{pgfonlayer}{foreground}
    \end{pgfonlayer}{foreground}
\end{tikzpicture}
    \caption{A $k$-long theta (left) and a $k$-long prism (right). Dashed lines represent paths of length at least $k$.}
    \label{fig:longthetaprism}
\end{figure}

A graph $H$ is called a \emph{prism} if it consists of two triangles $\{v_1,v_2,v_3\}$, $\{w_1,w_2,w_3\}$, and three pairwise disjoint paths $P_1,P_2,P_3$, each with length at least $1$, such that for each $i\in [3]$, $P_i$ has $v_i$ and $w_i$ as endpoints, and for each distinct $i,j\in [3]$, $v_iv_j$ and $w_iw_j$ are the only edges between $P_i$ and $P_j$.
We call $\{v_1,v_2,v_3\}$ and $\{w_1,w_2,w_3\}$ the \emph{triangles} of $H$.
A \emph{generalized prism} is similar to a prism, but at most one of the path $P_i$ is allowed to have length $0$.
In this case, the length of the other two paths have to be at least $2$.
Let $\mathsf{Prism}$ denote the class of generalized prisms.
A \emph{$k$-long prism} is a prism such that each paths $P_1,P_2,P_3$ has length at least $k$.
For each $k\geq 2$, let $\mathsf{Prism}_k$ denote the class of $k$-long prism.

In \cite{chudnovsky2024treeindependencenumberiii}, Chudnovsky, Hajebi and Trotignon show that excluding all thetas and prisms as induced subgraphs in $K_{1,d}$-free graphs results in a class of bounded tree-independence number.
We generalize this result by showing that one can replace thetas and prisms in the theorem of Chudnovsky, Hajebi and Trotignon with $k$-long thetas and $k$-long prisms respectively.

\begin{restatable}{theorem}{thmklongthetaandprism}\label{thm:klongthetaandprism}
    For all positive integers $k,d$, the class of ($K_{1,d},\mathsf{Theta}_k,\mathsf{Prism}_k$)-free graphs has bounded tree-independence number.
\end{restatable}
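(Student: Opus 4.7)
The plan is to apply \cref{thm:themainresult} to obtain a large skinny ladder as induced minor, and then exhibit a $k$-long theta or $k$-long prism inside this structure. Given a $K_{1,d}$-free graph $G$ with $\alpha$-$\mathsf{tw}(G)$ exceeding $\tau(N, d)$ for $N = N(k)$ chosen sufficiently large, \cref{thm:themainresult} produces an $N$-skinny ladder as induced minor. Unpacking the branch sets in $G$, I would extract two induced rail paths $P_u, P_v$ (with no edges between them), together with $N$ pairwise vertex-disjoint and mutually non-adjacent induced rung paths $R_1, \ldots, R_N$, each of length at least $2$, joining attachments $a_i \in P_u \cap B_{u_i}$ and $b_i \in P_v \cap B_{v_i}$, with $a_i, b_i$ appearing in order along $P_u$ and $P_v$ respectively.

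If some rung $R_{i_0}$ has length at least $k$, a $k$-long theta arises immediately. Take $i_1, i_2$ with $i_0 - i_1 \geq k$ and $i_2 - i_0 \geq k$ (possible whenever $N \geq 2k+1$), and form the three internally disjoint paths between $a_{i_0}$ and $b_{i_0}$: $R_{i_0}$ itself of length at least $k$; the left detour $P_u[a_{i_0}, a_{i_1}] \cdot R_{i_1} \cdot P_v[b_{i_1}, b_{i_0}]$ of length at least $2k+2$; and the right detour $P_u[a_{i_0}, a_{i_2}] \cdot R_{i_2} \cdot P_v[b_{i_2}, b_{i_0}]$ of length at least $2k+2$. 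The induced minor conditions force internal disjointness and the absence of cross-edges between internal vertices of different paths, so that $a_{i_0}, b_{i_0}$ (non-adjacent as $B_{u_{i_0}}, B_{v_{i_0}}$ are non-adjacent in the minor) together with these three paths form an induced $k$-long theta.

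The main obstacle is the case where every rung $R_i$ has length strictly less than $k$. The skinny ladder viewed abstractly does not contain a $k$-long theta as induced subgraph for $k \geq 3$, since its $2$-connectivity forces any theta inside it to use a single rung of length exactly $2$ as one of its three paths; hence we cannot simply read a long theta off the quotient. To surmount this, the plan is to leverage the $K_{1,d}$-free hypothesis together with the flexibility in the branch sets in $G$: when all rungs are short, the rail branch sets $B_{u_i} \cup B_{w_i} \cup B_{v_i}$ must still carry substantial induced structure. By applying a Ramsey-type argument on the attachment neighborhoods of the rungs on the rails and on the connectivity within these thickened segments, bounded by the $K_{1,d}$-free condition, one seeks to find either a long induced path inside some $B_{u_i} \cup B_{w_i} \cup B_{v_i}$ (which serves as a long ``middle rung'' and reduces to the easy case to yield a $k$-long theta), or a pair of triangles inside two distant rail-rung segments connected by three long rail-rung-rail paths (yielding a $k$-long prism). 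The hardest step is executing this structural analysis while preserving all the no-cross-edges conditions needed for a valid induced theta or prism; this is where the $K_{1,d}$-free hypothesis is used essentially.
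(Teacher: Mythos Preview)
Your Case~2 is a genuine gap, and it cannot be repaired along the lines you sketch. Consider the following $K_{1,4}$-free graph $G$: take two long non-adjacent induced paths $P_1,P_2$ and, for each $i\in[N]$, a single vertex $r_i$ with exactly one neighbour $p_{a_i}$ on $P_1$ and one neighbour $q_{b_i}$ on $P_2$, with $a_1<\dots<a_N$ and $b_1<\dots<b_N$, and the $r_i$ pairwise non-adjacent. This $G$ is triangle-free, so it contains no prism at all. Moreover every theta in $G$ must have its two ends among the degree-$3$ vertices $p_{a_i},q_{b_j}$; a short case check shows that three internally disjoint paths between such a pair exist only when $i=j$, and then one of the three paths is $p_{a_i}r_iq_{b_i}$ of length~$2$. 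Hence $G$ is $(\mathsf{Theta}_3,\mathsf{Prism}_1)$-free for every $N$. This $G$ realises the $N$-skinny ladder as an induced minor with branch sets that are either single vertices or subpaths, so your ``substantial induced structure in $B_{u_i}\cup B_{w_i}\cup B_{v_i}$'' simply need not be there, and no Ramsey argument on the branch sets can conjure a long theta or prism out of this picture. (Of course $G$ itself has bounded $\alpha$-treewidth; the point is that the skinny-ladder induced minor produced by \cref{thm:themainresult} inside some larger graph of huge $\alpha$-treewidth may be realised exactly like this, and your method then sees only $G$.)

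The paper avoids this obstruction by not going through \cref{thm:themainresult} at all. Instead it combines \cref{thm:pathandcycle} with \cref{thm:somethingpath} (taking $H$ to be the induced \emph{cycle}) to obtain a large \emph{cycle} rope ladder as an \emph{induced subgraph} of $G$ (\cref{thm:findingcycleropeladder}). The cycle rail is the key: between any two points on a cycle there are already two long arcs, so two of the three paths of a theta or prism come for free from the cycle, and only the third path needs to be routed through rungs and the path rail. \Cref{lem:cycleropeladderthetaandprism} then carries out a finite case analysis on the junction types of four well-separated rung endpoints on the cycle to produce an induced $k$-long theta or $k$-long prism. The replacement of one rail path by a rail cycle is precisely what your approach is missing.
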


\subsection{The problem of connecting vertex sets with disjoint paths}
We prove \Cref{thm:themainresult} by finding a ladder-like graph between a specific induced subgraph $H$ and an induced path $P$ where both $H$ and $P$ are chosen with respect to certain connectivity requirements.
The precise definition of this ladder-like graph is postponed to \Cref{sec:Ladderlikegraphs}.
The case where $H$ is a cycle is of particular interest.

The critical impediment for finding our ladder-like graphs comes from the absence of an induced version of Menger's theorem (see in particular \cite{Nguyen2025CounterexampletocoarseMenger,NguyenSS2025Asymptotic}).
There are various proofs of the Grid Theorem, e.g. \cite{GraphMinor5,Chekuri2016Polynomialgridtheorem}, but a unifying feature of all those proofs is a fundamental reliance on Menger's Theorem.
This key theorem yields bridges between sets of vertices that cannot be separated easily, thus guarantees the existence of many (intertwined) paths that eventually give rise to the grid minor.
Unfortunately, if we want to find some graph as an induced minor, or induced subgraph, we can no longer utilize the benefit of this powerful theorem.
However, approximate versions of Menger's Theorem for the existence of pairwise non-adjacent paths might still hold.
There are many conjectured variants of Menger's Theorem for the induced setting suggesting such analogues, and the following is an attempt of formulating a desirable version of such a theorem for the setting of this paper.

\begin{conjecture}\label{conj:inducedMenger}
    There exists a function $f_{\ref{conj:inducedMenger}}:\mathbb{N}^2\rightarrow \mathbb{N}$ such that for every $K_{1,d}$-free graph $G$ and two disjoint subsets of vertices $A,B\subseteq V(G)$, there exists either
    \begin{enumerate}
        \item $k$ pairwise non-adjacent $A$-$B$ paths, or
        \item a separator $S$ between $A$ and $B$ with $\alpha(S)<f_{\ref{conj:inducedMenger}}(k,d)$.
    \end{enumerate}
\end{conjecture}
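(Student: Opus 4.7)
The plan is to combine the classical Menger Theorem with the main theorem of this paper (\Cref{thm:themainresult}) via a Ramsey-type extraction on path families. First, I would apply Menger's Theorem with a threshold $N = N(k,d)$ chosen later: either $G$ admits an $A$-$B$ separator of size strictly less than $N$, whose independence number is then trivially smaller than $N$ so that item~(2) holds, or $G$ contains $N$ internally disjoint $A$-$B$ paths $P_1, \ldots, P_N$. I would select the family so as to minimise total length, which ensures that every $P_i$ is induced and that there are no ``shortcutting'' chords between distinct paths.

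The core step is to extract $k$ pairwise non-adjacent paths from $\{P_1, \ldots, P_N\}$. Form the auxiliary graph $H$ whose vertices are the paths $P_i$ and whose edges encode the existence of an edge in $G$ between the vertex sets of distinct $P_i$. A $k$-independent set in $H$ immediately yields option~(1), so the hard case is when $H$ has no such independent set; by Ramsey, for $N$ large enough this forces a large clique in $H$, i.e.\ a large family of pairwise adjacent, internally disjoint $A$-$B$ paths $Q_1, \ldots, Q_m$. From such a family my plan is to build a $k'$-skinny-ladder induced minor in $G$, using (subpaths of) two selected $Q_i$'s as rails and the remaining $Q_j$'s, together with the edges witnessing their pairwise adjacency, as rungs. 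Feeding this into \Cref{thm:themainresult} contrapositively then yields either a large $\alpha$-$\mathsf{tw}$ bound, which implies item~(2) after absorbing the separator of the large-bag into $S$, or a skinny-ladder induced minor whose rung-midpoints supply $k$ pairwise non-adjacent $A$-$B$ paths, contradicting the regime we are in.

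The main obstacle I anticipate lies precisely in the passage from a clique of pairwise adjacent paths to a controlled skinny-ladder induced minor. The adjacencies between the $Q_i$'s can be arbitrary, and without additional care these edges may weave together in a way that the Erd\H{o}s--Szekeres-style sorting used in the proof of \Cref{thm:themainresult} is unable to untangle while preserving the induced nature of the output; in particular, rungs could collapse into each other or chord across the rails. Here the $K_{1,d}$-free hypothesis should be genuinely indispensable, since any vertex adjacent to $d$ pairwise non-adjacent paths produces an induced $K_{1,d}$; this gives a strong local bound on how a single vertex may interact with the family $\{Q_i\}$. Translating this local bound into the global structural input required by \Cref{thm:themainresult} — and so converting clique-like adjacency patterns between the $Q_i$'s into ladder-like induced minors of $G$ — is the real difficulty, and the recent counterexamples to induced Menger in general graphs cited just above the conjecture strongly suggest that any such translation must exploit the $K_{1,d}$-free hypothesis essentially and non-trivially, likely through an induction on $d$ combined with a pigeonhole argument on neighbourhood types of vertices along a distinguished path.
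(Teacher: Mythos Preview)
The statement is presented in the paper as an open \emph{conjecture}; the paper does not prove it, and explicitly introduces the weaker \Cref{thm:somethingpath} --- in which one of the two sides may be replaced by a different induced path --- precisely because no such duality is currently available. There is therefore no paper's proof to compare against.

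Your proposal has a genuine gap, located exactly where you yourself flag the ``main obstacle'', and the appeal to \Cref{thm:themainresult} does not close it. First, bounded $\atw(G)$ does \emph{not} by itself yield a separator of small independence number between arbitrary $A$ and $B$: take $G=P_n$, let $A$ be the odd-indexed and $B$ the even-indexed vertices; then $\atw(G)=1$, yet every $A$--$B$ separator is a vertex cover of $P_n$ and hence has independence number $\Omega(n)$. So ``$\atw$ bounded $\Rightarrow$ option~(2)'' simply fails. Second, in the complementary regime where $\atw(G)$ is large, \Cref{thm:themainresult} only produces a skinny ladder \emph{somewhere} in $G$, with no relation to the given sets $A$ and $B$; it yields neither $A$--$B$ paths nor an $A$--$B$ separator. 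Third, even if you build a skinny ladder directly from your clique of pairwise adjacent $A$--$B$ paths, a skinny ladder has only two rails and its rung midpoints are single vertices, not paths --- you cannot harvest $k$ pairwise non-adjacent $A$--$B$ paths from it.

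Once the appeals to \Cref{thm:themainresult} are stripped away, what remains is exactly the task of handling a large family of pairwise adjacent, internally disjoint $A$--$B$ paths in a $K_{1,d}$-free graph. That is the entire content of the conjecture; your final paragraph correctly identifies it as the crux, but it is not a technicality to be smoothed over --- it is the open problem itself.
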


There have been several results on induced Menger's theorem on graphs with bounded maximum degree. In \cite{hendrey2024onaninducedversionofMengerstheorem} and \cite{gartland2023inducedversionsmengerstheorem}, two sets of authors independently showed similar versions of an induced Menger's theorem on graphs with bounded degree.
But beyond that, only few successful attempts have been reported \cite{Albrechtsen2024AMengertypetheoremfortwoinducedpaths}.
Some braver conjectures in this area have even been refuted \cite{Nguyen2025CounterexampletocoarseMenger,NguyenSS2025Asymptotic}.

In light of these findings, it appears sensible to ask whether a weaker variant would be able to fill the same role in a potential proof of a grid theorem for induced minors.
In the following we loosely refer to any theorem which fits into the mold below as a \emph{Menger-like theorem}.
Here ``Menger-like'' means that there either exists a large collection of pairwise disjoint and non-adjacent paths between two subgraphs satisfying some property depending on two initially given sets of vertices $X$ and $Y$ or guarantees the existence of a vertex set $S$ of small independence number such that $G-S$ has no path between $X$ and $Y$.
See \Cref{thm:somethingpath} for an example of such a Menger-like theorem. 

Indeed, we introduce \Cref{thm:somethingpath} to address the lack of an (approximate) duality theorem for non-adjacent paths and separators of small independence number.
In \Cref{thm:somethingpath}, instead of finding  $k$ non-adjacent paths between two fixed vertex sets, we fix only one of those sets and allow the other one to be replaced by another, while maintaining that the new vertex set induces a path.
Notice that allowing this slack is strong enough for constructing our ladder, but anything beyond the connectivity between \textsl{two} objects seems to be out of reach of this method for the time being.
\medskip

Another major obstacle for analyzing $K_{1,d}$-free graphs is that we do not have much control over the set of vertices or edges at distance at most $k$ from some fixed set of vertices or edges.
For $v\in V(G)$, let $N_k[v]$ denote the set of vertices at distance at most $k$ from $v$.
If the graph $G$ has maximum degree $\Delta$, i.e. if we have $|N_1[v]|\leq \Delta+1$, we also have $|N_k[v]|\leq \Delta((\Delta-1)^k-1)+1$.
However, if $G$ is $K_{1,d}$-free, i.e. if we have $\alpha(N_1[v])\leq d-1$, even $\alpha(N_2[v])$ can be unbounded.
For example, Let $G$ be a graph on $2n$ vertices consisting of an independent set $\{v_1,v_2,\cdots,v_n\}$ and a clique $\{w_1,w_2,\cdots,w_n\}$ such that $v_i$ and $w_j$ are adjacent if and only if $i=j$.
Then $G$ is $K_{1,3}$-free, but $\alpha(N_2[w_i])=n$ can be arbitrarily large.
In fact, this large clique with personal neighbors is essentially the graph which destroys the advantage of $K_{1,d}$-freeness, as we contract the clique, we get a vertex whose neighbor has large independence number.

The proof of \Cref{thm:somethingpath} is primarily concerned with the construction of paths whose intersections are sufficiently well-separated.
This ensures that, upon contracting the junctions where paths intersect, the independence number of resulting vertices remains bounded.

\subsection{Organization of the paper}
\Cref{sec:Prelim} introduces the notations and previous results on tree-independence number.
In \Cref{sec:Ladderlikegraphs}, we define several ladder-like graphs that appear in the proof of our main theorem.
Then, in \Cref{sec:Cleaning}, we prove that our relaxed ladder-like graphs, if chosen large enough, contain the (skinny) $k$-ladder as an induced minor.
In \Cref{sec:DividingPath}, we show how to find an induced cycle and an induced path such that every separator between them has large independence number within a graph of large $\alpha$-treewidth.
Finally, in \Cref{sec:MainTheorem}, we find $k$ disjoint paths between a fixed subgraph and an induced path, where the induced path may be changed to another induced path.
With the results of the two previous sections, we are now able to find the desired (skinny) $k$-ladder as an induced minor in a $K_{1,d}$-free graph with large tree-independence number.
After that, in \Cref{sec:Substructures}, we deduce the results from \cite{choi2025excludinginducedwheelminor} and \cite{chudnovsky2024treeindependencenumberiii} using our methods.
Moreover, we provide a generalization of a recent result of the Erd\H{o}s-P{\'o}sa property of long and pairwise non-adjacent induced cycles due to Ahn, Gollin, Huynh, and Kwon \cite{ahn2024coarseerdhosposatheorem,ahn2025coarseerdhosposatheorem}.

We conclude the paper in \Cref{sec:Conclusion} by discussing possible next steps towards \Cref{conj:K1dfreeinducedgrid}.

\section{Preliminaries}\label{sec:Prelim}
For a positive integer $a$, we denote by $[a]$ the set $\{1,\cdots,a\}$.

All graphs in this paper are finite and simple.
We mostly follow the notations and conventions from \cite{diestel2016graph}.

For a graph $G$ and a set $X\subseteq V(G)$, we denote by $G-X$ the graph obtained from $G$ by deleting all vertices in $X$.
Also, we denote by $G[X]$ the graph obtained from $G$ by deleting all vertices not in $X$, i.e.\@ $G[X]$ is the subgraph of $G$ \emph{induced by $X$}.

Here is a basic observation on $K_{1,d}$-free graphs we use repeatedly.

\begin{observation}\label{lem:nbdonpath}
    Let $d$ be a positive integer, let $G$ be a $K_{1,d}$-free graph, and let $A$ be an induced subgraph of $G$ that is either a path or a cycle of length at least $4$.
    Then for each $v\in V(G)\setminus V(A)$, we have $|N(v)\cap V(A)|\leq 2(d-1)$.
\end{observation}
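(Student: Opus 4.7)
My plan is to produce, inside $N(v) \cap V(A)$, an independent set large enough that combining it with $v$ violates the $K_{1,d}$-freeness hypothesis. Set $m := |N(v) \cap V(A)|$ and enumerate the neighbours of $v$ on $A$ as $u_1, \dots, u_m$ in the order they appear along $A$ (with an arbitrary orientation if $A$ is a cycle). Because $A$ is induced in $G$ and is either a path or a cycle of length at least four, two vertices of $V(A)$ are adjacent in $G$ exactly when they are consecutive on $A$; the length-at-least-four hypothesis serves precisely to rule out the triangle, in which every pair of vertices of $A$ would be adjacent.

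Next, I would examine $H := G[N(v) \cap V(A)]$. By the adjacency observation above, $H$ is isomorphic to the subgraph of $A$ induced by $\{u_1, \dots, u_m\}$, which is a disjoint union of subpaths of $A$ (apart from the degenerate case in which $A$ is a cycle and every vertex of $A$ lies in $N(v)$, where $H$ equals $A$ itself). Using the standard fact $\alpha(P_s) = \lceil s/2 \rceil$ together with $\sum_j \lceil s_j/2 \rceil \geq \lceil (\sum_j s_j)/2 \rceil$, $H$ contains an independent set $I$ of size at least $\lceil m/2 \rceil$, obtained by picking every other vertex of each component.

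Finally, since every vertex of $I$ is adjacent to $v$ and $I$ is independent in $G$, the set $\{v\} \cup I$ induces a copy of $K_{1, |I|}$ in $G$. The assumption that $G$ is $K_{1,d}$-free then forces $|I| \leq d - 1$, and so $\lceil m/2 \rceil \leq d - 1$, giving $m \leq 2(d-1)$. The argument is essentially immediate once one notices that an induced path or an induced cycle of length at least four has no chords; the only point that deserves a second look is the corner case where $A$ is a cycle all of whose vertices lie in $N(v)$, which is dispatched by applying the same counting directly to the cyclic structure.
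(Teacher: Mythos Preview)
Your argument is clean and correct outside the corner case you flag, and the paper states this result as an observation without proof, so there is no paper argument to compare against. However, the corner case is not dispatched as you claim. If $A$ is a cycle and $v$ is adjacent to every vertex of $A$, then $H=G[N(v)\cap V(A)]$ is the cycle $C_m$ itself, and $\alpha(C_m)=\lfloor m/2\rfloor$, not $\lceil m/2\rceil$. For odd $m$ your inequality degrades to $\lfloor m/2\rfloor\le d-1$, which only yields $m\le 2d-1$.

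This is not merely a gap in your write-up: the statement as written is off by one in exactly this situation. Take $G=W_5$ (the $5$-wheel), $d=3$, $A$ the rim $C_5$, and $v$ the hub. Every vertex of $W_5$ has a neighbourhood of independence number~$2$, so $W_5$ is $K_{1,3}$-free, yet $|N(v)\cap V(A)|=5>4=2(d-1)$. The fix is either to weaken the bound to $2d-1$, or to add the harmless hypothesis that $v$ is not adjacent to all of $A$; in every place the paper invokes the observation for a cycle, that cycle is far longer than $2d-1$, so the issue never bites. Your proof is entirely correct for paths and for the cycle case once $N(v)\cap V(A)\subsetneq V(A)$, since then $H$ really is a disjoint union of paths and the estimate $\alpha(H)\ge\lceil m/2\rceil$ goes through.
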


We also use following observation on the transitivity of the induced minor relation.

\begin{observation}\label{lem:inducedminorofinducedminor}
    Let $G_1,G_2,G_3$ be graphs.
    If $G_1$ is an induced minor of $G_2$ and $G_2$ is an induced minor of $G_3$, then $G_1$ is an induced minor of $G_3$.
\end{observation}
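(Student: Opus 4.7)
The plan is to prove the observation via the standard branch-set characterisation of induced minors: $H$ is an induced minor of $G$ if and only if there is a family $\{B_v : v\in V(H)\}$ of pairwise disjoint vertex subsets of $G$, called an \emph{induced minor model}, such that each $G[B_v]$ is connected, for every edge $uv\in E(H)$ there is at least one edge in $G$ between $B_u$ and $B_v$, and for every non-edge $uv$ of $H$ (with $u\neq v$) there is no edge in $G$ between $B_u$ and $B_v$. Equivalence with the definition in terms of vertex deletions and edge contractions is routine: contract each $B_v$ to a single vertex and delete the vertices outside $\bigcup_v B_v$; conversely, a sequence of such operations naturally yields branch sets.

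Given this, first I would pick an induced minor model $\{B_v:v\in V(G_1)\}$ of $G_1$ in $G_2$ and an induced minor model $\{C_w:w\in V(G_2)\}$ of $G_2$ in $G_3$. Then I would define, for each $v\in V(G_1)$,
\[
D_v \;=\; \bigcup_{w\in B_v} C_w \;\subseteq\; V(G_3),
\]
and claim that $\{D_v:v\in V(G_1)\}$ is an induced minor model of $G_1$ in $G_3$.

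The verification splits into four routine checks. Disjointness of the $D_v$ follows immediately from the disjointness of the $B_v$ together with the disjointness of the $C_w$. Connectedness of $G_3[D_v]$ follows because $G_2[B_v]$ is connected, so one can traverse any two vertices $w,w'\in B_v$ by a sequence of edges $w=w_0w_1\cdots w_t=w'$ in $G_2[B_v]$; each $G_3[C_{w_i}]$ is connected, and each consecutive pair $w_iw_{i+1}\in E(G_2)$ guarantees an edge of $G_3$ between $C_{w_i}$ and $C_{w_{i+1}}$. For an edge $uv\in E(G_1)$, pick $w\in B_u$ and $w'\in B_v$ with $ww'\in E(G_2)$; then there is an edge of $G_3$ between $C_w\subseteq D_u$ and $C_{w'}\subseteq D_v$.

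The only slightly subtle point, which I would treat with a little care, is the preservation of non-adjacency. Suppose $uv$ is a non-edge of $G_1$ but there is an edge of $G_3$ between $D_u$ and $D_v$, say with endpoints in $C_w\subseteq D_u$ and $C_{w'}\subseteq D_v$. Since $B_u\cap B_v=\varnothing$, we have $w\neq w'$, so the model of $G_2$ in $G_3$ forces $ww'\in E(G_2)$; but then there is an edge of $G_2$ between $B_u$ and $B_v$, contradicting that $uv$ is a non-edge of $G_1$ in the model of $G_1$ in $G_2$. This completes the verification and hence the proof; nothing here should require more than a few lines to write out, and there is no serious obstacle beyond being careful that the composed branch sets inherit the \emph{absence} of edges as well as their presence.
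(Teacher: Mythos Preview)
Your proof is correct. The paper states this as an observation without proof, so there is no argument to compare against; your branch-set composition is the standard and complete verification of transitivity for the induced minor relation.
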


\emph{Subdividing} an edge $uv\in E(G)$ is defined as follows: we remove the edge $uv$ from $E(G)$, introduce a new vertex $w$, and add edges $uw$ and $vw$ to $E(G)$. 

The \emph{neighborhood} of $X\subseteq V(G)$ is defined as the set $N_G(X)\coloneqq \{v\in V(G)\setminus X\mid \text{ there exists } x\in X \text{ such that }vx\in E(G)\}$.
We often say that $v\in N_G(X)$ is a \emph{neighbor} of $X$.
The \emph{closed neighborhood} of $X$ is defined by $N(X)\cup X$, and denoted by $N_G[X]$.
If $X=\{x\}$, we simply write $N_G(x)$ and $N_G[x]$, instead of $N_G(\{x\})$ and $N_G[\{x\}]$, respectively.
Also, we may omit the subscript $G$ if there is no ambiguity.
Similarly, we can define the \emph{distance-$\ell$ neighborhood} of $x\in V(G)$ by the set of vertices whose distance from $x$ is at most $\ell$, which is denoted by $N_\ell[v]$.

For $X,Y\subseteq V(G)$, we say that $X$ and $Y$ are \emph{adjacent} if $N[X]\cap Y\neq\emptyset$, and otherwise, we say that $X$ and $Y$ are \emph{non-adjacent}.

Let $P=v_1v_2\cdots v_{n-1}v_n$ be a path.
We say that $v_1$ and $v_n$ are the \emph{endpoints} of $P$, and if $n\geq 2$, we say that $v_2$ and $v_{n-1}$ are the \emph{second-endpoints} of $P$.
Vertices of $P$ that are not endpoints are said to be \emph{internal vertices}.
If $n=1$, i.e. if the length of $P$ is $0$, we say that $P$ does not have a second-endpoint.
To avoid cumbersome case distinctions, we sometimes say that `let $v_1$ be one endpoint of $P$ and let $v_n$ be the other endpoint of $P$' even if $v_1=v_n$.

For the sake of simplicity, we think of a path as a graph with an `order' on its vertices, in the sense that when we have $X=\{x_1,x_2,\cdots,x_k\}\subseteq V(P)$, with $x_i=v_{i'}$ for $i\in [k]$, we always assume that $i'<j'$ if $i<j$.
In this case, we say that $v_1$ is the \emph{first endpoint}, and $v_n$ is the \emph{last endpoint} of $P$.
Also, each subpath of a path is considered to have the same `order'.

For $1\leq i\leq j\leq n$, we denote by $v_iPv_j$ the subpath of $P$ between $v_i$ and $v_j$.
Also, we denote by $\mathring{v}_iP\mathring{v}_j$ the subpath of $P$ defined by $v_iPv_j-\{v_i,v_j\}$.
For a subpath $Q$ of $P$ and $v\in V(P)$, we say that a subpath $Q'$ is \emph{obtained from extending $Q$ to contain $v$} if $Q'$ is the minimal subpath which contains both $v$ and $Q'$.
Lastly, We say that vertex sets $\{A_i\}_{i\in [k]}$ \emph{appears in order along $P$} if $\emptyset \neq A_i\subseteq V(P)$ for each $i$ and there exists integers $1=x_1<x_2<\cdots<x_{k+1}=n+1$ such that $V(A_i)\subseteq V(p_{x_i}Pp_{x_{i+1}-1})$.
Note that this implies that $A_i$'s are disjoint.
We also say that vertices $\{a_i\}_{i\in [k]}$ \emph{appears in order along $P$} if $a_i\in V(P)$ and $\{\{a_i\}\}_{i\in [k]}$ appears in order along $P$.

Let $X,Y,Z\subseteq V(G)$ be vertex sets.
We say that a path $P$ is a \emph{path from $X$ to $Y$ through $Z$} if one endpoint of $P$ is in $X$, the other endpoint is in $Y$, and all other internal vertices are in $Z\setminus (X\cup Y)$.
Note that the endpoints of $P$ may not be in $Z$.
If $Z=V(G)$, we may omit $Z$.
We also say that \emph{$x\in X$ is reachable from $Y$ through $Z$} if there is a path from $X$ to $Y$ through $Z$ such that one endpoint is $x$.
Equivalently, $x\in X$ is reachable from $Y$ through $Z$ if there is a component $C$ of $(Y\cup Z)-X$ that is adjacent to $x$ and $V(C)\cap Y\neq \varnothing$.
We say that $S\subseteq V(G)$ is a \emph{separator} between $X$ and $Y$ if every vertices in $X$ is not reachable from $Y$ through $V(G)\setminus S$.
Here, we allow $S\cap X\neq \emptyset$ or $S\cap Y\neq \emptyset$.

\subsection{tree-independence number}
For a graph $G$, a \emph{tree-decomposition} is a pair $\mathcal{T}=(T,\beta)$ which consists of a tree $T$ and a map $\beta\colon V(T)\rightarrow 2^{V(G)}$ satisfying the following conditions: for each vertex $v\in V(G)$, there exists some $t\in V(T)$ such that $v\in \beta(t)$, for each edge $uv\in E(G)$, there exists some $t\in V(T)$ such that $\{u,v\}\subseteq \beta(t)$, and for each vertex $v\in V(G)$ the graph $T[\{t\in V(T)\mid v\in \beta(t)\}]$ is connected.
The independence number of $\mathcal{T}$, denoted by $\alpha(\mathcal{T})$, is defined by $\max_{t\in V(T)}\alpha(G[\beta(t)])$.
The tree-independence number of $G$, denoted by $\atw(G)$, is defined as $\min_{\mathcal{T}}\alpha(\mathcal{T})$, where minimum is taken over all tree-decompositions $\mathcal{T}$ for $G$.

Adler \cite{IsoldeThesis} defined bramble-like objects for generalized width parameters defined by modifying treewidth, and proved a duality theorem between the generalized bramble number and corresponding width parameter.
This bramble-like object corresponding to the tree-independence number is the key concept toward proving our main theorem.

A \emph{strong bramble} in $G$ is a collection $\mathcal{B}=\{B_1,\dots,B_n\}$ of vertex subsets $B_i\subseteq V(G)$ such that $G[B_i]$ is connected for all $i\in [n]$, and $B_i\cap B_j\neq\emptyset$ for all $i,j\in [n]$.
A \emph{hitting set} of $\mathcal{B}$ is a set $X$ such that $X\cap B_i\neq \emptyset$ for all $i\in [n]$.
The $\alpha$-order of $\mathcal{B}$ is defined by $\min\{ \alpha(X) \mid X\subseteq V(G)\text{ is a hitting set of }\mathcal{B}\}$.

A short proof of this bramble-duality for tree-independence number can be found in \cite{choi2025excludinginducedwheelminor}.

\begin{theorem}[\cite{IsoldeThesis,choi2025excludinginducedwheelminor}]\label{thm:alphaduality}
    For every graph $G$ and a positive integer $k$, the following statements hold.
    \begin{enumerate}
        \item If $G$ has a strong bramble of $\alpha$-order $k$, then $\alpha\text{-}\mathsf{tw}(G)\geq k$.
        \item If $\alpha\text{-}\mathsf{tw}(G)\geq 4k-2$ then $G$ has a strong bramble of $\alpha$-order at least $k$.
    \end{enumerate}
\end{theorem}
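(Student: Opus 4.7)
My plan is to prove the two parts of the duality separately.

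For the first part, I would use a Helly-type argument. Given a tree decomposition $(T,\beta)$ of $G$ and a strong bramble $\mathcal{B}=\{B_1,\ldots,B_n\}$, for each $i$ the set $T_i\coloneqq\{t\in V(T):\beta(t)\cap B_i\neq\emptyset\}$ is a subtree of $T$; this is the standard fact that connected vertex sets of $G$ correspond to subtrees of $T$ in any tree decomposition. For any $i,j$, the strong bramble condition $B_i\cap B_j\neq\emptyset$ yields $T_i\cap T_j\neq\emptyset$ (any node whose bag contains a vertex of $B_i\cap B_j$ lies in both subtrees). By the Helly property for subtrees of a tree, the family $\{T_i\}_{i\in[n]}$ has a common node $t^*$. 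Then $\beta(t^*)$ meets every $B_i$, so $\beta(t^*)$ is a hitting set of $\mathcal{B}$, and the definition of $\alpha$-order gives $\alpha(\beta(t^*))\geq k$, proving $\alpha\text{-}\mathsf{tw}(G)\geq k$.

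For the second part, I would argue by contrapositive: assume $G$ admits no strong bramble of $\alpha$-order at least $k$, and construct a tree decomposition of $G$ with bags of independence number at most $4k-3$. I would build it recursively, processing pairs $(H,W)$ where $H$ is a connected induced subgraph of $G$ and $W\subseteq V(H)$ satisfies $\alpha(W)\leq k-1$, starting from $H=G$ and $W=\emptyset$. If $\alpha(V(H))\leq 4k-3$, a single bag $V(H)$ terminates the branch. Otherwise, I choose a set $X\subseteq V(H)$ containing $W$ with $\alpha(X)\leq 4k-3$ such that for every component $C$ of $H-X$ the attachment $N(C)\cap X$ satisfies $\alpha(N(C)\cap X)\leq k-1$. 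Making $X$ the bag at the current tree node and recursing on each pair $(G[V(C)\cup(N(C)\cap X)],N(C)\cap X)$ assembles a valid tree decomposition of $\alpha$-width at most $4k-3$.

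The main obstacle is the separator lemma justifying the recursive step. If no such $X$ exists, one shows that the failed separation attempts can be organized into a family of pairwise intersecting connected induced subgraphs of $G$ such that every hitting set has independence number at least $k$; this is precisely a strong bramble of $\alpha$-order at least $k$, contradicting the hypothesis. The factor $4$ enters naturally: each bag combines the inherited boundary $W$ (contributing independence at most $k-1$) with a freshly chosen separator whose independence number is bounded by $3k-2$ and which leaves only components with attachments of independence at most $k-1$, yielding the overall bound $4k-3$. Formalizing the iterative extraction of the bramble from obstructed separators is the delicate step, but the approach mirrors the classical Seymour--Thomas style proof of bramble duality for treewidth, adapted by replacing vertex count with independence number throughout.
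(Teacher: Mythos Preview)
The paper does not actually prove \Cref{thm:alphaduality}; it is quoted from \cite{IsoldeThesis,choi2025excludinginducedwheelminor} with the remark that ``a short proof of this bramble-duality for tree-independence number can be found in \cite{choi2025excludinginducedwheelminor}.'' So there is no in-paper proof to compare against.

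As for your proposal itself: Part~1 is correct and standard---the Helly argument for subtrees is exactly the right tool, and the fact that $\beta(t^*)$ hits every bramble element immediately gives $\alpha(G[\beta(t^*)])\geq k$. Part~2 follows the usual Seymour--Thomas recursive template, and the overall architecture (process pairs $(H,W)$ with $\alpha(W)\leq k-1$, find a suitable separator, recurse on components) is sound. However, you are right to flag the separator lemma as the delicate step, and as written it is not yet a proof: you assert that failed separation attempts ``can be organized into a family of pairwise intersecting connected induced subgraphs'' forming a strong bramble of $\alpha$-order $\geq k$, but you do not say how. In the classical treewidth proof this step uses a careful greedy or potential-function argument (e.g.\ maintaining a bramble and a candidate hitting set, and showing that if the hitting set fails to separate in a balanced way one can enlarge the bramble). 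Translating this to the $\alpha$-setting requires checking that independence number behaves well under the relevant union/intersection operations, and in particular that the bound $3k-2$ on the fresh separator really does fall out; this is not automatic and is where the constant $4$ must be earned. Your sketch is a plausible plan, but the core lemma still needs to be written out.
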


Note that we use the notion of strong bramble that requires each pair of bramble elements to intersect, instead of the usual bramble notion where each pair of bramble elements either intersect or adjacent.

The core use of (strong) brambles in this paper, besides them witnessing large tree-independence number, is that they naturally give rise to a specific induced path which we use as the base for constructing the rail paths of our (skinny) ladders.

\begin{lemma}[\cite{choi2025excludinginducedwheelminor}]\label{lemma:dominatingpath}
    Let $G$ have a strong bramble $\mathcal{B}$. Then there is an induced path $P$ such that $N[P]\cap B\neq \emptyset$ for all $B\in \mathcal{B}$.
\end{lemma}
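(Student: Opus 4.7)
The plan is to pick an extremal induced path and push a contradiction if it fails to dominate every bramble element. Specifically, among all induced paths in $G$, I would choose a path $P = p_1 p_2 \cdots p_k$ that first maximizes the set $\mathcal{B}_P := \{B \in \mathcal{B} : N[P] \cap B \neq \emptyset\}$ of dominated bramble elements, and, subject to that, minimizes $|V(P)|$. Assuming $\mathcal{B} \neq \emptyset$, this path is nontrivial, since a single vertex of any $B \in \mathcal{B}$ already forms an induced path with $|\mathcal{B}_P| \geq 1$. I aim to show $\mathcal{B}_P = \mathcal{B}$; otherwise pick $B \in \mathcal{B} \setminus \mathcal{B}_P$, so $N[P] \cap B = \emptyset$, and derive a contradiction by constructing a strictly better induced path.

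The key trick is to exploit minimality at an endpoint of $P$. Because $|V(P)|$ is minimum among maximizers, deleting $p_1$ (if $k \geq 2$) strictly decreases the number of dominated bramble elements, so there exists $B^{(1)} \in \mathcal{B}_P$ with $B^{(1)} \cap N[p_1] \neq \emptyset$ but $B^{(1)} \cap N[p_2 \cdots p_k] = \emptyset$ (if $k = 1$ take any $B^{(1)} \in \mathcal{B}_P$, the second condition being vacuous). By the strong bramble property, $B^{(1)} \cap B \neq \emptyset$, pick $w \in B^{(1)} \cap B$; note $w \notin N[p_1]$ since $B \cap N[P] = \emptyset$. Using that $G[B^{(1)}]$ is connected, let $Q = q_0 q_1 \cdots q_m$ be a shortest path from $p_1$ to $w$ in $G[B^{(1)} \cup \{p_1\}]$ (so $q_0 = p_1$, $q_m = w$, and $q_1, \dots, q_m \in B^{(1)}$).

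Next I would verify that the concatenation $P' := p_k p_{k-1} \cdots p_1 q_1 q_2 \cdots q_m$ is an induced path. Distinctness of vertices follows because $B^{(1)} \cap V(P) \subseteq \{p_1\}$ (from $B^{(1)} \cap N[p_2 \cdots p_k] = \emptyset$), so no $q_j$ with $j \geq 1$ coincides with any $p_i$. For non-consecutive non-adjacency, the $q$-$q$ chords are ruled out by $Q$ being a shortest path in $G[B^{(1)} \cup \{p_1\}]$, and the key point is that for $i \geq 2$ and $j \geq 1$, any $q_j p_i$ edge would put $q_j \in N[p_2 \cdots p_k]$, contradicting $q_j \in B^{(1)}$; finally, a $p_1 q_j$ edge with $j \geq 2$ would shortcut $Q$, contradicting its minimality. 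Hence $P'$ is an induced path containing $w \in B$, so $\mathcal{B}_{P'} \supseteq \mathcal{B}_P \cup \{B\}$, which strictly exceeds $|\mathcal{B}_P|$ and contradicts the extremal choice of $P$.

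\textbf{Main obstacle.} The delicate step is the inducedness check for $P'$: extending at an arbitrary internal vertex of $P$ would make $P'$ branch rather than remain a path, so the extension must happen precisely at an endpoint; and the extension via $Q$ must introduce no chord to the rest of $P$. Both issues are resolved simultaneously by the essentialness of the endpoint (which confines the "witness" bramble element $B^{(1)}$ to the neighborhood of $p_1$, forbidding any vertex of $Q$ from touching $V(P) \setminus \{p_1\}$) together with choosing $Q$ as a shortest path (which forbids chords inside $Q$ and back to $p_1$). Once this inducedness is secured, the strict gain $\mathcal{B}_P \cup \{B\} \subseteq \mathcal{B}_{P'}$ is immediate, and the proof closes.
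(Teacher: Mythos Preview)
Your argument is correct. The extremal choice (maximize the number of dominated bramble elements, then minimize length) together with the strong-bramble intersection property lets you extend the path at an endpoint through a ``private'' bramble element $B^{(1)}$ into the undominated element $B$; the key inducedness check goes through exactly as you say, since every internal vertex of the extension lies in $B^{(1)}$, which by construction avoids $N[p_2\cdots p_k]$, and shortness of $Q$ kills all remaining chords.

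As for comparison with the paper: this lemma is not proved in the present paper at all---it is quoted from \cite{choi2025excludinginducedwheelminor} and used as a black box. So there is nothing in this manuscript to compare your argument against. Your proof is a clean, self-contained justification and would serve perfectly well as a proof of the cited lemma.
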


Notice that, in $K_{1,d}$-free graphs, if $\mathcal{B}$ is a strong bramble of $\alpha$-order at least $dw$, then the path $P$ found by \cref{lemma:dominatingpath} must have at least $w$ vertices.

In \cite{choi2025excludinginducedwheelminor}, the authors link the endpoints of such path to form a cycle.
In \Cref{sec:DividingPath}, we further modify this method to form a cycle using only a part of the path.

\section{Ladder-like graphs}\label{sec:Ladderlikegraphs}

The primary goal of this paper is finding a ladder as an induced minor.
As a first step, we define several ladder-like graphs that occur as intermediate steps.

Let $k$ be a positive integer.
A \emph{$k$-shuffled rope ladder} is a graph consisting of induced paths $P_1,P_2,\Phi^1,\Phi^2,\cdots,\Phi^k$ (possibly with length $0$), where $\phi^i_1$ and $\phi^i_2$ are endpoints of $\Phi^i$, such that
\begin{itemize}
    \item $P_1,P_2,\Phi^1,\cdots,\Phi^k$ are mutually disjoint,
    \item $P_1$ and $P_2$ are non-adjacent,
    \item $\Phi^1,\cdots,\Phi^k$ are mutually non-adjacent, and
    \item $N(\Phi^i)\cap V(P_j)= N(\phi_j^i)\cap V(P_j)\neq \emptyset$ for each $i\in [k]$ and $j\in [2]$.
\end{itemize}

Similar to the $k$-ladder, we refer to $P_1$ and $P_2$ as \emph{rail paths}, and $\Phi^i$'s as \emph{rung paths} of a $k$-shuffled rope ladder.
Note that unlike in ladders, rung paths and rail paths are disjoint, but they must still be adjacent.
See \cref{fig:shuffledropeladder} for an illustration.

A \emph{$k$-rope ladder} is a shuffled rope ladder with the following additional condition: 
$\{N(\phi_j^i)\cap V(P_j)\}_{i\in [k]}$ appears in order along $P_j$ for each $j\in\{1,2\}$.

\begin{figure}
    \centering
    \begin{tikzpicture}[scale=1.15]
    \pgfdeclarelayer{background}
    \pgfdeclarelayer{foreground}
    \pgfsetlayers{background,main,foreground}
    \begin{pgfonlayer}{background}
        \pgftext{\includegraphics[width=6cm]{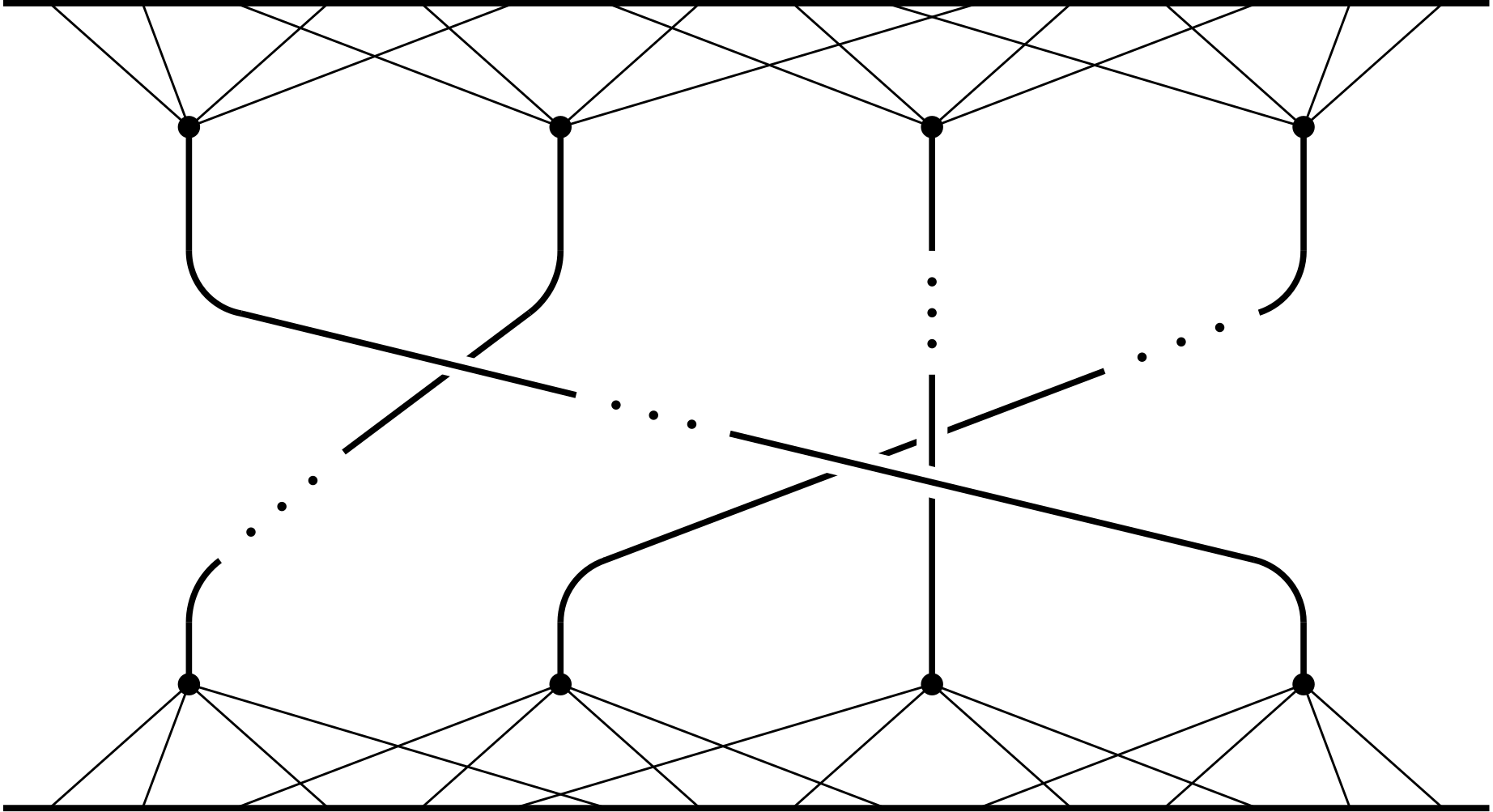}} at (C.center);
    \end{pgfonlayer}{background}
    \begin{pgfonlayer}{main}
        \node[]() at (-3.2,1.6) {\large $P_1$};
        \node[]() at (-3.2,-1.65) {\large $P_2$};
        \node[]() at (-2,1) {$\phi_1^1$};
        \node[]() at (-0.5,1) {$\phi_1^2$};
        \node[]() at (1,1) {$\phi_1^3$};
        \node[]() at (2.5,1) {$\phi_1^4$};
        \node[]() at (-2.47,0.55) {\large $\Phi^1$};
        \node[]() at (-1,0.55) {\large $\Phi^2$};
        \node[]() at (0.47,0.55) {\large $\Phi^3$};
        \node[]() at (1.97,0.55) {\large $\Phi^4$};
        \node[]() at (-2,-0.95) {$\phi_2^2$};
        \node[]() at (-0.5,-0.95) {$\phi_2^4$};
        \node[]() at (1,-0.95) {$\phi_2^3$};
        \node[]() at (2.5,-0.95) {$\phi_2^1$};
    \end{pgfonlayer}{main}
    \begin{pgfonlayer}{foreground}
    \end{pgfonlayer}{foreground}
    \end{tikzpicture}
    \hspace{0.25cm}
    \begin{tikzpicture}[scale=1.25]
    \pgfdeclarelayer{background}
    \pgfdeclarelayer{foreground}
    \pgfsetlayers{background,main,foreground}
    \begin{pgfonlayer}{background}
        \pgftext{\includegraphics[width=6cm]{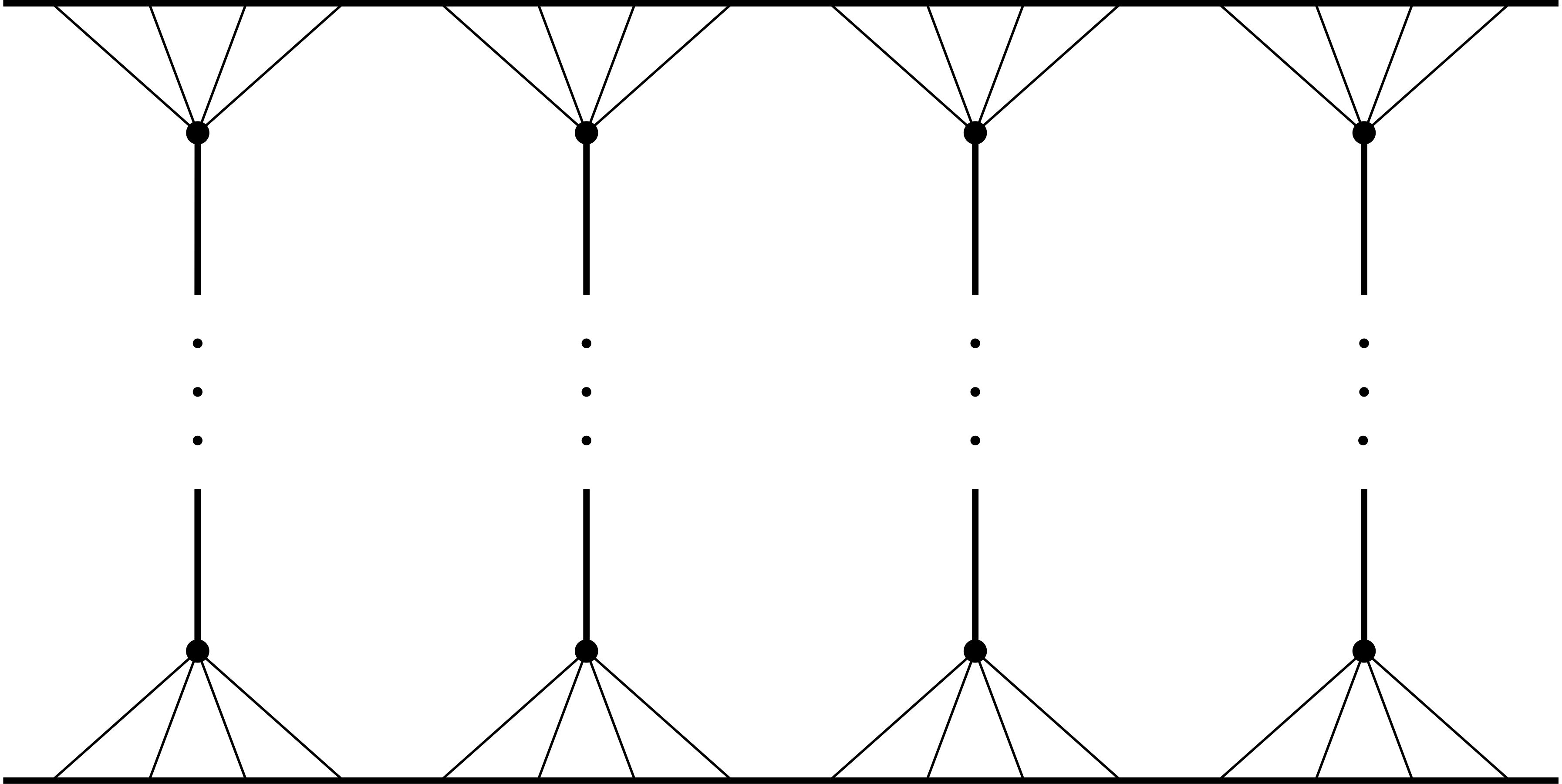}} at (C.center);
    \end{pgfonlayer}{background}
    \begin{pgfonlayer}{main}
        \node[]() at (-3.2,1.45) {$P_1$};
        \node[]() at (-3.2,-1.55) {$P_2$};
        \node[]() at (-2,0.9) {$\phi_1^1$};
        \node[]() at (-0.5,0.9) {$\phi_1^2$};
        \node[]() at (1,0.9) {$\phi_1^3$};
        \node[]() at (2.5,0.9) {$\phi_1^4$};
        \node[]() at (-2,-0.85) {$\phi_2^1$};
        \node[]() at (-0.5,-0.85) {$\phi_2^2$};
        \node[]() at (1,-0.85) {$\phi_2^3$};
        \node[]() at (2.5,-0.85) {$\phi_2^4$};
        \node[]() at (-2.47,0) {\large $\Phi^1$};
        \node[]() at (-1,0) {\large $\Phi^2$};
        \node[]() at (0.47,0) {\large $\Phi^3$};
        \node[]() at (1.97,0) {\large $\Phi^4$};
    \end{pgfonlayer}{main}
    \begin{pgfonlayer}{foreground}
    \end{pgfonlayer}{foreground}
    \end{tikzpicture}
    \caption{A $4$-shuffled rope ladder (left) and a $4$-rope ladder (right).}
    \label{fig:shuffledropeladder}
\end{figure}

In \Cref{sec:Cleaning}, we show that for fixed $k$, there is some large $k'$ such that every $K_{1,d}$-free $k'$-shuffled rope ladder contains a $k$-rope ladder as an induced subgraph (see \cref{lem:cleanshuffledropeladder}).
It is easy to see that a $k$-rope ladder contains the $k$-skinny ladder as an induced minor.
Therefore, to show that a $K_{1,d}$-free graph $G$ contains the $k$-skinny ladder as an induced minor, it is enough to find a large shuffled rope ladder as an induced subgraph.
\medskip

We may further generalize shuffled rope ladders by replacing the rail paths with other subgraphs.
In this paper, we only consider the case where only one of the rail path is replaced by another graph $H$.
Indeed, we only need the case where $H$ is an induced cycle, but some of our proofs hold even for for general $H$.

A \emph{$k$-$H$-rope ladder}, is a graph consisting of any graph $H$ and induced paths $P,\Phi^1,\Phi^2,\cdots,\Phi^k$, where $\phi_1^i$ and $\phi_2^i$ denote the endpoints of $\Phi^i$, such that
\begin{itemize}
    \item $H,P,\Phi^1,\cdots,\Phi^k$ are mutually disjoint,
    \item $H$ and $P$ are non-adjacent,
    \item $\Phi^1,\cdots,\Phi^k$ are mutually non-adjacent,
    \item $V(H)\cap N(\Phi^i)=V(H)\cap N(\phi_1^i)\neq \emptyset$ for each $i\in [k]$, and
    \item $V(P)\cap N(\Phi^i)=V(P)\cap N(\phi_2^i)\neq \emptyset$ for each $i\in [k]$.
\end{itemize}
We say that $H$ is a \emph{rail subgraph}, $P$ is a \emph{rail path}, and $\Phi^i$'s are rung paths of a $k$-$H$-rope ladder.

Note that if $H$ is an induced path, above definition is equal to the $k$-rope ladder.
When $H$ is an induced cycle, we denote this graph as a \emph{$k$-cycle rope ladder}.
In this case, we say that $C$ is a \emph{rail cycle} of a $k$-cycle rope ladder.
See \cref{fig:CycleRopeladder} for an illustration.

Notice that we drop the adjective ``\textsl{shuffled}'' in our definition for $k$-$H$-rope ladders.
That is, for any path $P$, a $k$-$P$-rope ladder is a $k$-shuffled rope ladder.
This is for two reasons, first for convenience and second because in the generality of $H$ we loose the natural order of the vertices provided by a path.

\begin{figure}
    \centering
    \begin{tikzpicture}[scale=1.5]
    \pgfdeclarelayer{background}
    \pgfdeclarelayer{foreground}
    \pgfsetlayers{background,main,foreground}
    \begin{pgfonlayer}{background}
        \pgftext{\includegraphics[width=6cm]{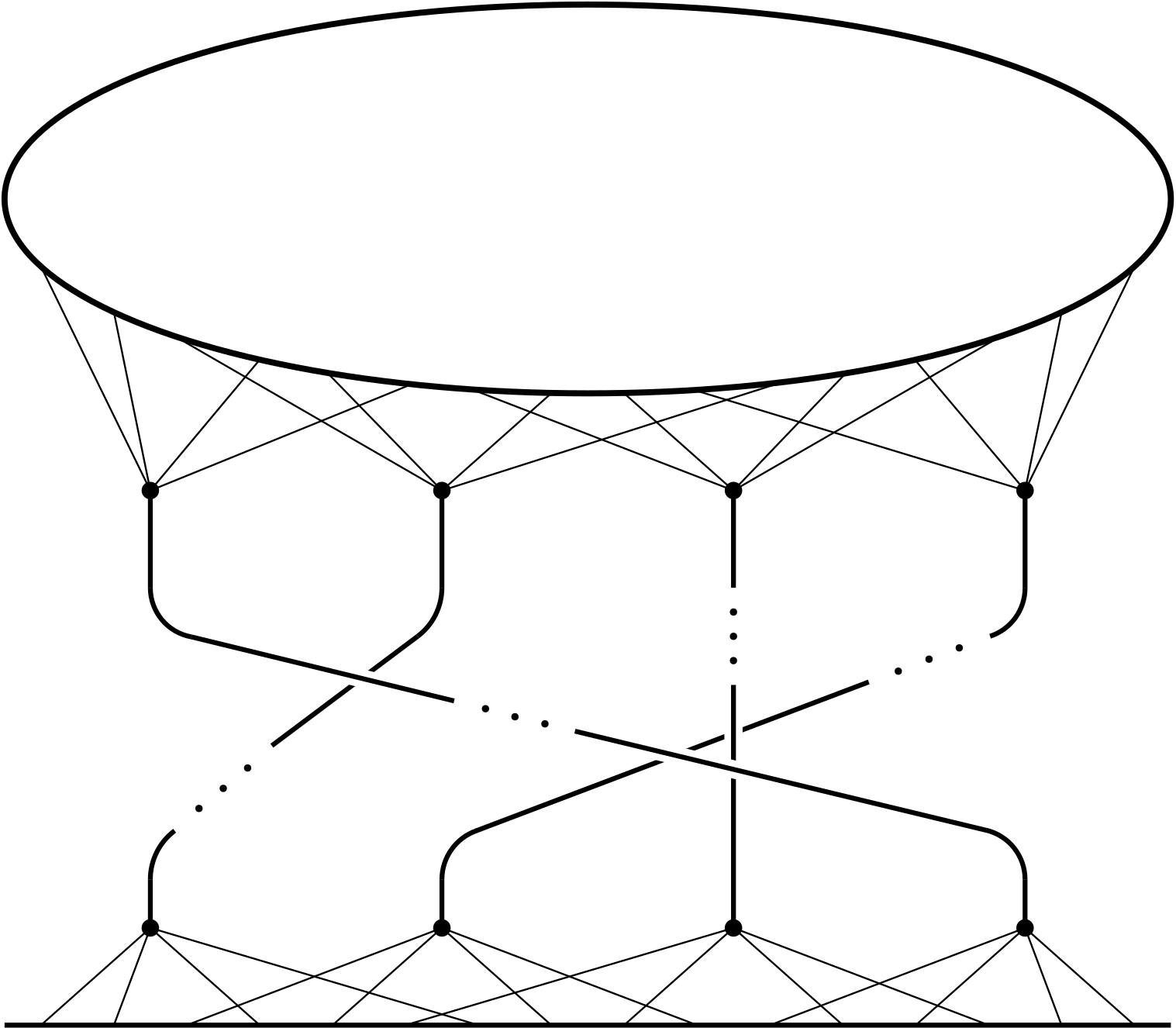}} at (C.center);
    \end{pgfonlayer}{background}
    \begin{pgfonlayer}{main}
        \node[]() at (-3.2,1.63) {$C$};
        \node[]() at (-3.2,-2.6) {$P$};
        \node[]() at (-2.05,0) {$\phi_1^1$};
        \node[]() at (-0.55,0) {$\phi_1^2$};
        \node[]() at (0.95,0) {$\phi_1^3$};
        \node[]() at (2.45,0) {$\phi_1^4$};
        \node[]() at (-2.05,-1.93) {$\phi_2^2$};
        \node[]() at (-0.55,-1.93) {$\phi_2^4$};
        \node[]() at (0.95,-1.93) {$\phi_2^3$};
        \node[]() at (2.45,-1.93) {$\phi_2^1$};
        \node[]() at (-2.45,-0.4) {\large $\Phi^1$};
        \node[]() at (-0.96,-0.4) {\large $\Phi^2$};
        \node[]() at (0.54,-0.4) {\large $\Phi^3$};
        \node[]() at (2.03,-0.4) {\large $\Phi^4$};
    \end{pgfonlayer}{main}
    \begin{pgfonlayer}{foreground}
    \end{pgfonlayer}{foreground}
\end{tikzpicture}
    \caption{A $4$-cycle rope ladder.}
    \label{fig:CycleRopeladder}
\end{figure}

\section{Cleaning a shuffled rope ladder}\label{sec:Cleaning}

In this section, we prove that for each $k$, there is some large $k'$ such that a $K_{1,d}$-free $k'$-shuffled rope ladder contains the $k$-rope ladder as an induced subgraph.
As a first step, we prove a lemma that `cleans' one side of a shuffled rope ladder.

\begin{lemma}\label{cleanoneside}
    Let $k$ and $d\geq 2$ be positive integers.
    Let $G$ be a $K_{1,d}$-free graph with $V(G)=V(P)\cup A$, where $P$ is an induced path and $A=N(P)$ is an independent set of cardinality at least $\rho(k,d)\coloneqq (2(d-1)^2(2d-1)^{2d-2})^{k-1}$.
    Then we can find $B=\{b_1,b_2,\cdots,b_k\}\subseteq A$ and a subpath $Q$ of $P$, such that $\{N(b_i)\cap V(Q)\}_{i\in [k]}$ appears in order along $Q$.
\end{lemma}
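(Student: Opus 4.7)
The proof is by induction on $k$. For the base case $k=1$, pick any $b_1\in A$ (nonempty by hypothesis, since $A=N(P)$) and choose $Q$ to be any subpath of $P$ containing a single neighbor of $b_1$. For the inductive step, assume the statement for $k-1$ and let $|A|\geq\rho(k,d)$. The plan is to select a careful $b_1$, pigeonhole the rest of $A$ by a structural ``type'' induced by $N(b_1)$, recurse to the right of $b_1$, and finally glue $b_1$ onto the recursive solution.

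I would choose $b_1\in A$ so that $L=\text{last}(b_1)$, the position of its rightmost neighbor on $P$, is as small as possible. By \cref{lem:nbdonpath}, $|N(b_1)|\leq 2(d-1)$, so I write $N(b_1)=\{p_{j_1}<\cdots<p_{j_{m_1}}\}$ with $p_{j_{m_1}}=p_L$ and $m_1\leq 2(d-1)$. Since $A$ is independent and $G$ is $K_{1,d}$-free, every vertex of $V(P)$ has at most $d-1$ neighbors in $A$, so at most $2(d-1)^2$ vertices of $A\setminus\{b_1\}$ are adjacent to some vertex of $N(b_1)$. Let $A'=A\setminus(\{b_1\}\cup N(N(b_1)))$; then $|A'|\geq|A|-1-2(d-1)^2$. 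The complement $V(P)\setminus N(b_1)$ splits into at most $2d-1$ connected components $I_0<I_1<\cdots<I_{m_1}$, with $I_{m_1}=[p_{L+1},p_n]$. Assign to each $a\in A'$ a \emph{type}: the sequence, in the order in which the neighbors of $a$ occur along $P$, of the indices $j$ for which the neighbor lies in $I_j$. Since this sequence has at most $2d-2$ entries drawn from at most $2d-1$ values, there are at most $(2d-1)^{2d-2}$ distinct types, and pigeonhole yields a subset $A''\subseteq A'$ of common type with $|A''|\geq|A'|/(2d-1)^{2d-2}\geq\rho(k-1,d)$ (using the recursion defining $\rho$).

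By the minimality of $L$ and disjointness from $N(b_1)$, every $a\in A''$ satisfies $\text{last}(a)>L$; therefore the last entry of the common type equals $m_1$, meaning that every $a\in A''$ has at least one neighbor in $I_{m_1}$. I would then apply the induction hypothesis to the induced subgraph $G'=G[V(I_{m_1})\cup A'']$ with $P'=I_{m_1}$ playing the role of the path (after checking that $A''=N_{G'}(P')$ and that $G'$ is $K_{1,d}$-free), obtaining vertices $b_2,\ldots,b_k\in A''$ and a subpath $Q'\subseteq P'$ such that $\{N_{G'}(b_i)\cap V(Q')\}_{i\in\{2,\ldots,k\}}$ appears in order along $Q'$.

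To conclude, I would take $Q$ to be the subpath of $P$ from $p_L$ to the last vertex of $Q'$, using separators $p_L,\,p_{L+1}$ followed by the inductive separators on $Q'$. Since $N(b_1)\cap V(Q)=\{p_L\}$ and $p_L\notin N(b_i)$ for $i\geq 2$ (because $A''$ avoids $N(b_1)$), the first interval of $Q$ contains exactly $p_L$ and accommodates $b_1$, while the remaining intervals inherit the inductive ordering on $Q'$. The main obstacle is that $Q'$ may begin strictly to the right of $p_{L+1}$: in that case the ``gap'' $[p_{L+1},\text{start of }Q'-1]$ lies in $V(Q)\setminus V(Q')$, and it may harbor neighbors of some $b_i$ with $i\geq 3$ whose full $I_{m_1}$-neighborhood is not captured by $V(Q')$. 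Such extras cannot be placed in $b_1$'s interval (they are not neighbors of $b_1$) nor in $b_i$'s interval (which starts strictly to the right of the gap via the inductive separator), and so they violate the ordering. Resolving this---either by strengthening the inductive hypothesis to force $Q'$ to start at the leftmost vertex of $P'$, or by a further pigeonhole refinement of $A''$ based on the leftmost $I_{m_1}$-neighbor of each candidate---without inflating the bound $\rho(k,d)$ is the most delicate step of the argument.
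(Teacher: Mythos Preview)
Your proposal is incomplete by your own admission: you correctly isolate the obstacle---possible neighbors of $b_2,\dots,b_k$ in the segment of $I_{m_1}$ to the left of $Q'$---but you do not close it, and neither suggested fix works as stated. Strengthening to ``$Q'$ begins at the leftmost vertex of $P'$'' is not something your own induction step can maintain: to satisfy it at the outer level you would have to start $Q$ at $p_1$, and then any $b_i\in A''$ with a neighbor in some $I_t$ with $t<m_1$ immediately interleaves with $N(b_1)$ and destroys the ordering. Your type argument does not rescue this, because you cannot control which type is common; indeed the only consequence you actually draw from it---that the last entry of the common type is $m_1$---already follows from the minimality of $L$ without any pigeonholing, so the factor $(2d-1)^{2d-2}$ is spent for nothing. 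The second fix, pigeonholing on the leftmost $I_{m_1}$-neighbor, has unboundedly many classes.

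The paper resolves the issue by changing both the anchor and the strengthening. It picks $a_1$ with earliest \emph{first} neighbor $v_1^1$ (not earliest last neighbor) and strengthens the induction to ``$a_1\in B$ and $v_1^1\in V(Q)$''. Since $v_1^1$ precedes every neighbor of every element of $A$, the recursive instance of this strengthening forces the start of $Q'$ to lie weakly left of all neighborhoods of $A''$, which is exactly what kills your gap at the gluing step. The price is that $a_1$ now may have further neighbors $v_2^1,v_3^1,\dots$ to the right, possibly past some neighborhoods of $A''$. This is handled by a case analysis on the sizes of the intervals between consecutive $a_1$-neighbors (after first passing to $A'\ni a_1$ with pairwise disjoint neighborhoods, costing the factor $2(d-1)^2$): either some interval $\mathring{v}_j^1P\mathring{v}_{j+1}^1$ is large and one recurses on it---so $Q'$ and hence $Q$ end before $v_{j+1}^1$, cutting off the later $a_1$-neighbors---or all intervals are small, and then most of $A'$ has no neighbor at all before $v_{\deg(a_1)}^1$, so one recurses to the right of it. The geometric sum arising from this dichotomy is what the factor $(2d-1)^{2d-2}$ actually pays for.
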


\begin{proof}
Let $x$ be an endpoint of $P$.
Throughout the proof, we assume that the `order' of $P$ is fixed so that $x$ is the first vertex.
Let $A=\{a_1,a_2,\cdots,a_{\rho(k,d)}\}$, and let $N(a_i)=\{v_{1}^i,v_{2}^i,$ $\cdots,v_{\deg(a_i)}^i\}\subseteq V(P)$ be the neighbors of $a_i$, where the subscripts of the $v_j^i$s are given in the order according to their appearance on $P$.
By \Cref{lem:nbdonpath}, we have $\deg(a_i)\leq 2d-2$ for all $i$.
Let us name the vertices of $A$ so that $v_1^1$ appears first on $P$ among the $v_j^i$'s.
We use induction on $k$ to show a strengthening of our statement:
We additionally demand $a_1\in B$ and $v_1^1\in V(Q)$.

If $k=1$, we can choose $B=A$ and $Q=P$.
Now assume that $k\geq 2$ and that the statement holds for $k-1$.
Since $G$ is $K_{1,d}$-free, each vertex in $P$ is adjacent to at most $d-1$ vertices in $A$.
Hence, we may find $A'\subseteq A$ such that
\begin{itemize}
    \item $a_1\in A'$,
    \item $|A'|\geq |A|/(2(d-1)^2)\geq(2d-1)^{2d-2}\rho(k-1,d)$, and
    \item for each $a,a'\in A'$, $N(a)\cap N(a')=\emptyset$.
\end{itemize}
We may rename the vertices in $A'$ so that $v_{1}^i$ appears before $v_{1}^{i+1}$ in $P$, for all $i$.
Note that $a_1$ does not change by this renaming.
We define $\mathsf{gap}(x,y)\coloneqq |V(\mathring{x}P\mathring{y})\cap N(A')|$.
One may consider $\mathsf{gap}$ as a distance between $x$ and $y$, only counting the vertices that have a neighbor in $A'$.
We proceed with a case distinction on $\mathsf{gap}(v_j^1,v_{j+1}^1)$.
See \cref{fig:cleancase1} for an illustration.

\medskip

\textbf{Case 1:} \textsl{There is some $j\in [2d-3]$ such that $\mathsf{gap}(v_j^1,v_{j+1}^1)\geq (2d-2)(2d-1)^{j-1}\rho(k-1,d)$.}
Choose $j$ the be minimal with respect to the property above.
Then, since any vertex in $A'$ can have at most $2d-2$ neighbors in $\mathring{v}_{j}^1P\mathring{v}_{j+1}^1$, we have
\begin{align*}
    |(A'\setminus \{a_1\})\cap N(\mathring{v}_{j}^1P\mathring{v}_{j+1}^1)|\geq ~& \mathsf{gap}(v_j^1,v_{j+1}^1)/(2d-2)\\
    \geq ~& (2d-1)^{j-1}\rho(k-1,d).
\end{align*}

On the other hand, by the minimality of $j$, we have
\begin{align*}
|(A'\setminus\{a_1\}) \cap N(\mathring{v}_{1}^1P\mathring{v}_{j}^1)| \leq ~& \sum_{h\in [j-1]} \mathsf{gap}(v_h^1,v_{h+1}^1) \\
< ~& \big(\sum_{h\in[j-1]}(2d-2)(2d-1)^{h-1}\big)\rho(k-1,d)\\
= ~&((2d-1)^{j-1}-1)\rho(k-1,d).
\end{align*}

Let $A'' \coloneqq (A'\setminus\{a_1\})\cap (N(\mathring{v}_{j}^1P\mathring{v}_{j+1}^1)\setminus N(\mathring{v}_{1}^1P\mathring{v}_{j}^1))$ be the set of all members of $A'\setminus \{a_1\}$ that have a neighbor in $\mathring{v}_{j}^1P\mathring{v}_{j+1}^1$ but have no neighbor in $\mathring{v}_{1}^1P\mathring{v}_{j}^1$.
Then $|A''|\geq \rho(k-1,d)$, so we can use the induction hypothesis to obtain $B'\subseteq A''$ and a subpath $Q'$ of $P$.
Now let $B \coloneqq B'\cup \{a_1\}$ and let $Q$ be the minimal subpath of $P$ containing $Q'$ and $x$.
This gives the desired outcome of the lemma.

\begin{figure}
    \centering
    \begin{tikzpicture}[scale=1.25]
    \pgfdeclarelayer{background}
    \pgfdeclarelayer{foreground}
    \pgfsetlayers{background,main,foreground}
    \begin{pgfonlayer}{background}
        \pgftext{\includegraphics[width=6cm]{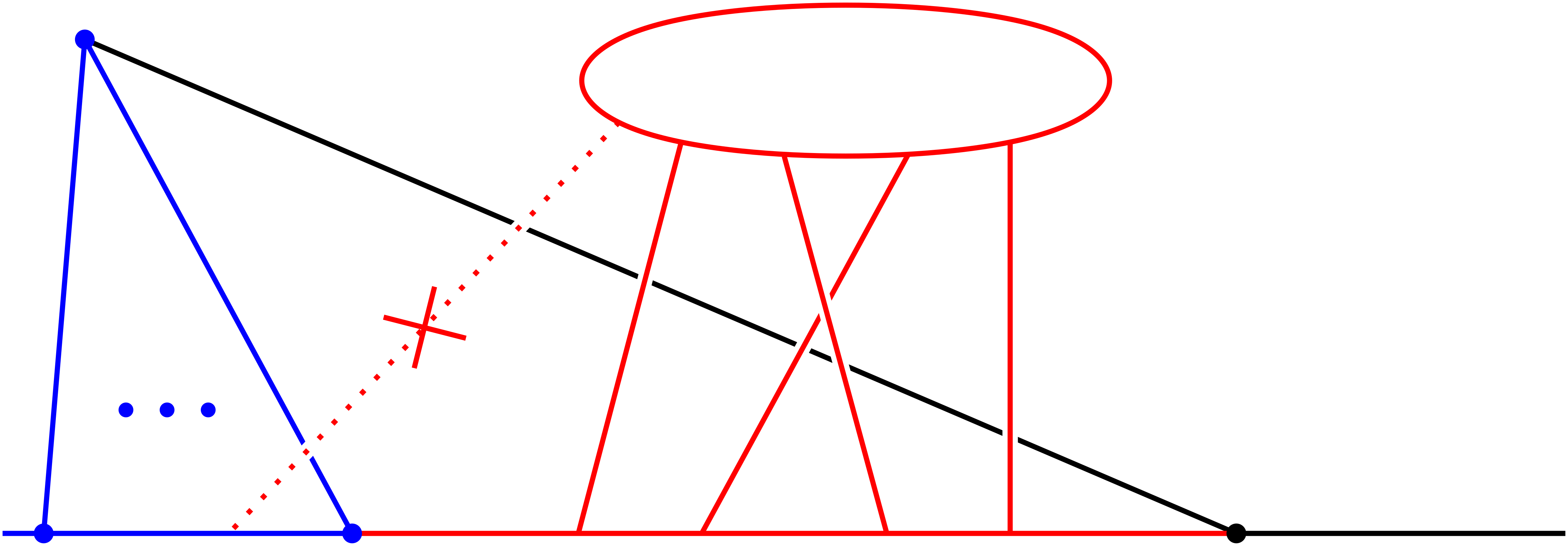}} at (C.center);
    \end{pgfonlayer}{background}
    \begin{pgfonlayer}{main}
        \node[]() at (-2.45,1.03) {$a_1$} ;
        \node[]() at (-3.13,-0.96) {$x$} ;
        \node[]() at (-2.6,-1.18) {$v_1^1$} ;
        \node[]() at (-1.43,-1.2) {$v_j^1$} ;
        \node[]() at (1.88,-1.2) {$v_{j+1}^1$} ;
        \node[]() at (0.38,0.72) {$\mathcolor{red}{A''}$} ;
    \end{pgfonlayer}{main}
    \begin{pgfonlayer}{foreground}
    \end{pgfonlayer}{foreground}
\end{tikzpicture}
\hspace{0.2cm}
\begin{tikzpicture}[scale=1.25]
    \pgfdeclarelayer{background}
    \pgfdeclarelayer{foreground}
    \pgfsetlayers{background,main,foreground}
    \begin{pgfonlayer}{background}
        \pgftext{\includegraphics[width=6cm]{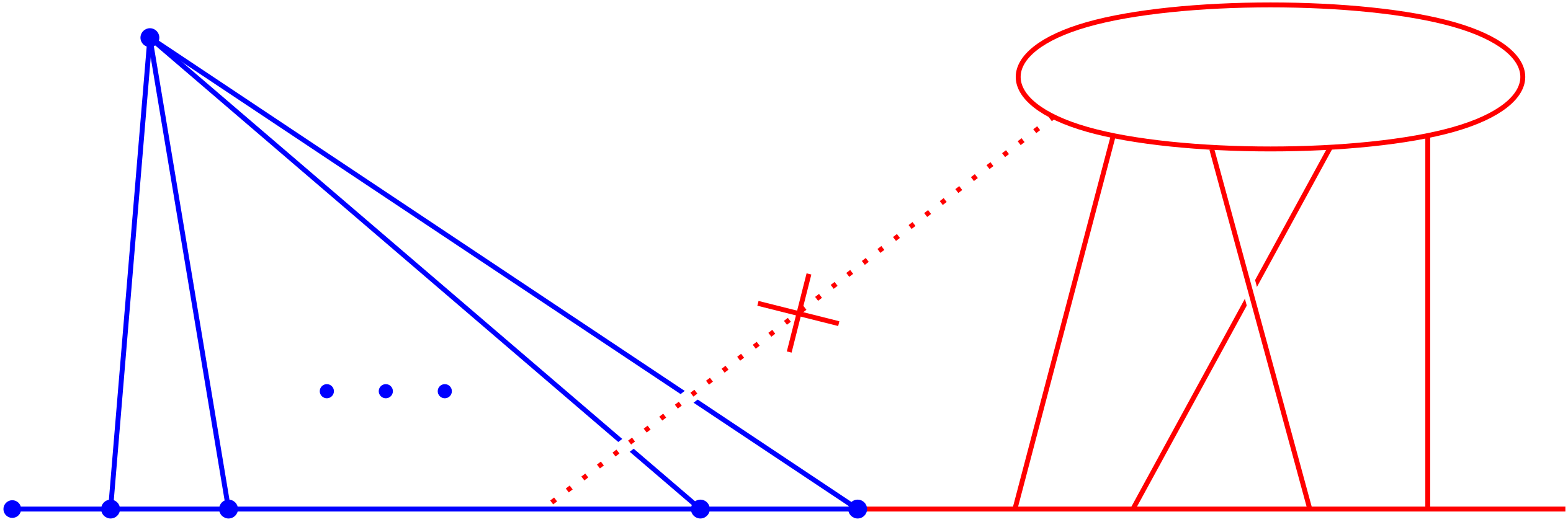}} at (C.center);
    \end{pgfonlayer}{background}
    \begin{pgfonlayer}{main}
        \node[]() at (-2.45,1.03) {$a_1$} ;
        \node[]() at (-3.13,-0.96) {$x$} ;
        \node[]() at (-2.6,-1.18) {$v_1^1$} ;
        \node[]() at (0.48,-1.2) {$v_{\deg{(a_1)}}^1$} ;
        \node[]() at (1.9,0.72) {$\mathcolor{red}{A''}$} ;
    \end{pgfonlayer}{main}
    \begin{pgfonlayer}{foreground}
    \end{pgfonlayer}{foreground}
\end{tikzpicture}
    \caption{The two cases in the proof of \cref{cleanoneside}; \textbf{Cases 1} (Left): There are many vertices in $A'$ that have no neighbor before $v_j^1$ but many of them have a neighbor before $v_{j+1}^1$.
    In this case, we discard $v^1_{j+1}$ and all vertices after it. 
    \textbf{Case 2} (Right): There are many vertices in $A'$ that have no neighbor before $v^1_{\deg(a_1)}$.}
    \label{fig:cleancase1}
\end{figure}

\medskip

\textbf{Case 2:} Assume that for each $j\in [2d-3]$, $\mathsf{gap}(v_j^1,v_{j+1}^1)< (2d-2)(2d-1)^{j-1}\rho(k-1,d)$ holds.
Then the number of vertices in $A'\setminus\{a_1\}$ that have a neighbor in $\mathring{v}_1^1P\mathring{v}_{\deg(a_1)}^1$ is
\begin{align*}
    (A'\setminus\{a_1\}) \cap N(\mathring{v}_1^1P\mathring{v}_{\deg(a_1)}^1) \leq ~& \sum_{j\in [\deg(a_1)-1]} \mathsf{gap}(v_j^1,v_{j+1}^1)\\
    < ~& \sum_{j\in [\deg(a_1)-1]}(2d-2)(2d-1)^{j-1}\rho(k-1,d)\\
    = ~& ((2d-1)^{\deg(a_1)}-1)\rho(k-1,d)\\
    \leq ~& ((2d-1)^{2d-2}-1)\rho(k-1,d).
\end{align*}
Let $A'' \coloneqq (A'\setminus \{a\})\setminus N(P\mathring{v}_{\deg(a_1)}^1)$ be the set of all members of $A'$ that have no neighbor in $P\mathring{v}_{\deg(a_1)}^1$.
Then $|A''|\geq \rho(k-1,d)$, so we can use the induction hypothesis on $A''$ and $\mathring{v}^1_{\deg(a_1)}P$ to obtain $B'\subseteq A''$ and a subpath $Q'$ of $P$.
Then let $B=B'\cup \{a_1\}$ and let $Q$ be the minimal subpath of $P$ containing $Q'$ and $x$.
This gives the desired outcome of the lemma, which completes the proof.
\end{proof}

By using \Cref{cleanoneside} twice and applying the Theorem of Erd\H{o}s and Szekeres on monotone subsequences \cite{ErdosSzekeres}, we can show that a large shuffled rope ladder contains a rope ladder as an induced subgraph.

\begin{theorem}[Erd\H{o}s and Szekeres \cite{ErdosSzekeres}]\label{ErdosSzekeres}
    Let $a$ and $b$ be positive integers, $n \geq (a-1)(b-1)+1$, and $x_1,x_2,\cdots,x_n\in \mathbb{R}$ be a sequence of real numbers.
    Then we can either find a monotonically increasing subsequence of length $a$ or a monotonically decreasing subsequence of length $b$.  
\end{theorem}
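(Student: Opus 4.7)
The plan is to prove \Cref{ErdosSzekeres} by the classical pigeonhole/labelling argument due to Seidenberg. For each index $i\in [n]$ I would attach a pair of positive integers: let $\alpha_i$ denote the length of a longest monotonically increasing subsequence of $(x_1,\dots,x_n)$ that ends at $x_i$, and let $\beta_i$ denote the length of a longest monotonically decreasing subsequence ending at $x_i$ (both include $x_i$ itself, so $\alpha_i,\beta_i\geq 1$). The whole argument then reduces to tracking these two labels.

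Suppose toward a contradiction that the conclusion fails, i.e.\ no monotonically increasing subsequence of length $a$ exists and no monotonically decreasing subsequence of length $b$ exists. Then for every $i$ we have $\alpha_i\in\{1,\dots,a-1\}$ and $\beta_i\in\{1,\dots,b-1\}$, so each pair $(\alpha_i,\beta_i)$ lives in a set of size $(a-1)(b-1)$. Since $n\geq (a-1)(b-1)+1$, the pigeonhole principle produces indices $i<j$ with $(\alpha_i,\beta_i)=(\alpha_j,\beta_j)$. I would close the argument by a case split on the comparison of $x_i$ and $x_j$: if $x_i\leq x_j$, then appending $x_j$ to a longest monotonically increasing subsequence ending at $x_i$ yields $\alpha_j\geq \alpha_i+1$; symmetrically, if $x_i\geq x_j$, then $\beta_j\geq \beta_i+1$. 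Either way the equality $(\alpha_i,\beta_i)=(\alpha_j,\beta_j)$ is contradicted.

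The only subtlety, and the only place one might slip up, is the treatment of ties: if $x_i=x_j$ then naively both branches of the case split seem to apply, which would be inconsistent. I would fix the conventions once and for all, say ``monotonically increasing'' meaning $\leq$ and ``monotonically decreasing'' meaning $<$, so that exactly one branch is triggered when $x_i=x_j$. With this convention the proof is routine and requires no further casework, and the bound $(a-1)(b-1)+1$ is tight as witnessed by concatenating $b-1$ decreasing blocks each of length $a-1$ in increasing order of their minima.
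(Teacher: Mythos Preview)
The paper does not supply its own proof of this statement; it is quoted as the classical Erd\H{o}s--Szekeres theorem and used as a black box in the proof of \Cref{lem:cleanshuffledropeladder}, where the sequence in question is a permutation. Your Seidenberg-style labelling argument is the standard proof and is correct.

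One small remark on your discussion of ties: the situation is less delicate than you suggest. If both ``monotonically increasing'' and ``monotonically decreasing'' are read as non-strict, then when $x_i=x_j$ \emph{both} branches of your case split produce a contradiction, which is harmless rather than ``inconsistent''; the only problematic reading is when both are strict, since then the theorem is actually false for constant sequences. Your mixed convention works too, but in the paper's single application the values $\sigma(1),\dots,\sigma((k-1)^2+1)$ are distinct, so the point is moot.
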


\begin{lemma}\label{lem:cleanshuffledropeladder}
    Let $k$ and $d\geq 2$ be positive integers.
    Let $\nu(k,d)\coloneqq \rho(\rho((k-1)^2+1,d),d)$.
    If a $\nu(k,d)$-shuffled rope ladder is $K_{1,d}$-free, then it has a $k$-rope ladder as an induced subgraph.
\end{lemma}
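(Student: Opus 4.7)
The plan is to apply \cref{cleanoneside} once to each rail and then use the Erd\H{o}s--Szekeres theorem to align the two orderings we obtain.

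\textbf{Setup and cleaning the first rail.} Denote the given $\nu(k,d)$-shuffled rope ladder by $L$, with rail paths $P_1,P_2$ and rung paths $\Phi^1,\ldots,\Phi^{\nu(k,d)}$, where $\phi^i_1,\phi^i_2$ are the endpoints of $\Phi^i$ as in the definition. First I would pass to the induced subgraph $G_1\coloneqq L[V(P_1)\cup\{\phi^i_1:i\in[\nu(k,d)]\}]$. Because the rung paths are pairwise non-adjacent, the set $A_1\coloneqq\{\phi^i_1:i\in[\nu(k,d)]\}$ is an independent set, and by the definition of a shuffled rope ladder each $\phi^i_1$ has at least one neighbor on $P_1$, so $A_1=N_{G_1}(P_1)$. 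Writing $m\coloneqq \rho((k-1)^2+1,d)$, we have $|A_1|=\nu(k,d)=\rho(m,d)$, so \cref{cleanoneside} applied to $G_1$ yields a subpath $Q_1\subseteq P_1$ and a subset $B_1\subseteq A_1$ of size $m$, which I relabel as $\{\phi^{i_1}_1,\ldots,\phi^{i_m}_1\}$, such that the neighborhoods $N(\phi^{i_\ell}_1)\cap V(Q_1)$ appear in order along $Q_1$ as $\ell$ ranges over $[m]$.

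\textbf{Cleaning the second rail.} Next I would repeat the argument on the other side for exactly the surviving rung paths. Set $G_2\coloneqq L[V(P_2)\cup\{\phi^{i_\ell}_2:\ell\in[m]\}]$; again $A_2\coloneqq\{\phi^{i_\ell}_2:\ell\in[m]\}$ is independent and equals $N_{G_2}(P_2)$. Since $|A_2|=m=\rho((k-1)^2+1,d)$, \cref{cleanoneside} produces a subpath $Q_2\subseteq P_2$ and a subset $B_2\subseteq A_2$ of size $(k-1)^2+1$, corresponding to rung paths $\Phi^{j_1},\ldots,\Phi^{j_{(k-1)^2+1}}$, whose $P_2$-neighborhoods $N(\phi^{j_\ell}_2)\cap V(Q_2)$ appear in order along $Q_2$. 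Crucially, since $\{j_1,\ldots,j_{(k-1)^2+1}\}\subseteq\{i_1,\ldots,i_m\}$, the $P_1$-neighborhoods of these rungs still appear in order along $Q_1$ (in the inherited order from Step~1), although this order may differ from the $Q_2$-order.

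\textbf{Aligning via Erd\H{o}s--Szekeres and concluding.} The two orderings on the $(k-1)^2+1$ surviving rungs define a permutation; by \cref{ErdosSzekeres} with $a=b=k$, there is a monotone subsequence of length $k$, giving rung paths $\Phi^{h_1},\ldots,\Phi^{h_k}$ whose $Q_1$- and $Q_2$-neighborhoods are comonotone (after reversing the orientation of $Q_2$ if the subsequence is decreasing). The induced subgraph on $V(Q_1)\cup V(Q_2)\cup\bigcup_{\ell\in[k]}V(\Phi^{h_\ell})$ then satisfies all conditions of a $k$-rope ladder: the rails $Q_1,Q_2$ remain disjoint induced paths and are non-adjacent (being subpaths of $P_1,P_2$), the $\Phi^{h_\ell}$ remain pairwise non-adjacent and disjoint from the rails, and the required neighborhoods are nonempty and appear in order along the respective rails by construction.

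\textbf{Main obstacle.} The real content is packed into \cref{cleanoneside}; once it is in hand, the remaining work is a clean two-step cleaning followed by Erd\H{o}s--Szekeres. The only point that requires a small check is that \cref{cleanoneside} outputs a $Q$ on which every selected $b\in B$ actually \emph{has} a neighbor (so that the surviving rungs remain adjacent to the shortened rails), but this is exactly what the ``appears in order'' clause guarantees, since that phrase requires each of the neighborhood sets to be nonempty.
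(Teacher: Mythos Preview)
Your proof is correct and follows essentially the same approach as the paper's: apply \cref{cleanoneside} to each rail in turn, then use Erd\H{o}s--Szekeres on the resulting permutation of the $(k-1)^2+1$ surviving rungs to extract $k$ rungs whose orderings on both rails agree. Your write-up is actually slightly more careful than the paper's in that you explicitly pass to the induced subgraph $G_j$ to match the hypotheses of \cref{cleanoneside} and you note the possible orientation reversal of $Q_2$.
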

\begin{proof}
    Let $P_1$ and $P_2$ be the rail paths of a $\nu(k,d)$-shuffled rope ladder, and let $\Phi^1,\Phi^2,\cdots,\Phi^{\nu(k,d)}$ be its rung paths.
    Let $\phi_1^i\in N(P_1)$ be one endpoint of $\Phi^i$ and let $\phi_2^i\in N(P_2)$ be its other endpoint.
    By applying \Cref{cleanoneside} on $P_1$ and $\{\phi_1^1,\phi_1^2,\cdots,\phi_1^{\nu(k,d)} \}$, we may find a subpath $Q_1$ of $P_1$ and a subset $B=\{b_1,b_2,\cdots,b_{\rho((k-1)^2+1,d)}\}\subseteq [\nu(k,d)]$ such that the $\{N(\phi_1^{b_i})\cap V(Q_1)\}_{i\in [\rho((k-1)^2+1,d)]}$ appear in order along $Q_1$.
    Now apply \Cref{cleanoneside} again on $P_2$ and $\{\phi_2^{b_1},\phi_2^{b_2}, \cdots,\phi_2^{b_{\rho((k-1)^2+1,d)}}\}$ to find a subpath $Q_2$ of $P_2$ and subset $B'=\{b'_1,b'_2,\cdots,b'_{(k-1)^2+1}\}\subseteq B$ such that the $\{N(\phi_2^{b'_j})\cap V(Q_2)\}_{j\in [(k-1)^2+1]}$ appear in order along $Q_2$.
    
    By renaming the indices of $\Phi^i$'s in $B'$ to $[(k-1)^2+1]$, we may assume that the $\{N(\phi_1^{i})\cap V(Q_1)\}_{i\in [(k-1)^2+1]}$ appear in order along $Q_1$ and $\{N(\phi_2^{\sigma(i)})\cap V(Q_2)\}_{i\in [(k-1)^2+1]}$ appears in order along $Q_2$, where $\sigma$ is a permutation on $[(k-1)^2+1]$.
    Now we may use \Cref{ErdosSzekeres} to obtain a subset $B''\subseteq [(k-1)^2+1]$ of size $k$ such that $\sigma(b)$ for $b\in B''$ monotonically increases or monotonically decreases.
    By deleting all rung paths other than whose indices are in $B''$, we now find a $k$-rope ladder as an induced subgraph using $Q_1$ and $Q_2$ as a rail paths, and the $\{\Phi_i\}_{i\in B''}$ as rung paths.
\end{proof}

Hence, our goal from now on is to find a $\nu(k,d)$-shuffled rope ladder as an induced subgraph in a graph with sufficiently large tree-independence number.

\section{Finding rail paths for a shuffled rope ladder}\label{sec:DividingPath}
In this section, we show that in a $K_{1,d}$-free graph with large tree-independence number, we can find a non-adjacent induced path $P$ and subgraph $H$, where $H$ is either an induced path or an induced cycle, and any separator between their neighbor has large independence number.
In \Cref{sec:MainTheorem}, these induced subgraphs will be the starting place to find a shuffled rope ladder or a cycle rope ladder.
A core feature of our proof, which allows us to circumvent the absence of an (approximate) induced Menger Theorem for $K_{1,d}$-free graphs, is that throughout our proofs the choice of the graph $H$ will be fixed, but we allow to replace the path $P$ with a different path in certain circumstances.

\begin{theorem}\label{thm:pathandcycle}
    Let $d\geq 2$ and $\eta\geq d$ be positive integers and let $G$ be a $K_{1,d}$-free graph with $\atw(G)\geq 8\eta+8d-14$.
    Then $G$ contains an induced path $P$ and an induced cycle $C$ such that
    \begin{itemize}
        \item $P$ and $C$ are non-adjacent, and
        \item if $S$ is a separator between $N[P]$ and $N[C]$, then $\alpha(S)\geq \eta$.
    \end{itemize}
\end{theorem}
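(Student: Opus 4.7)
The plan is to combine strong-bramble duality with the dominating-path construction. Since $\atw(G)\ge 8\eta+8d-14=4(2\eta+2d-3)-2$, \Cref{thm:alphaduality}(2) supplies a strong bramble $\mathcal{B}$ of $\alpha$-order at least $k:=2\eta+2d-3$, and \Cref{lemma:dominatingpath} yields an induced path $P^\ast=v_1\cdots v_m$ with $N[P^\ast]\cap B\neq\varnothing$ for every $B\in\mathcal{B}$. Because $G$ is $K_{1,d}$-free we have $\alpha(N[v])\le d-1$ for every $v$, and a straightforward greedy-assignment argument shows $\alpha(N[P^\ast])\le (d-1)m$; combined with the hitting-set property of $N[P^\ast]$ this forces $m\ge k/(d-1)$, giving ample length along $P^\ast$ in which to operate.

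Next, I would use a sweeping/averaging argument along $P^\ast$ to locate a split $P_L:=v_1\cdots v_t$, $P_R:=v_{t+1}\cdots v_m$ whose two sub-brambles $\mathcal{B}^L:=\{B\in\mathcal{B}:B\cap N[P_L]\neq\varnothing\}$ and $\mathcal{B}^R:=\{B\in\mathcal{B}:B\cap N[P_R]\neq\varnothing\}$ both retain $\alpha$-order at least $\eta+d-1$; the slack of $2d-3$ above $2\eta$ in the original $\alpha$-order is exactly what absorbs the $K_{1,d}$-overhead at the cut. I then assemble the cycle $C$ from $P_L$: pick $B_1,B_2\in\mathcal{B}^L$ with $v_1\in N[B_1]$ and $v_t\in N[B_2]$. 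Since $\mathcal{B}$ is a strong bramble $B_1\cap B_2\neq\varnothing$, and as both are connected, $G[B_1\cup B_2]$ contains a path from a neighbor of $v_1$ to a neighbor of $v_t$; choosing this path $Q$ minimally makes it internally disjoint from $V(P^\ast)$, and a shortest induced cycle inside $V(P_L)\cup V(Q)$ is the desired $C$. The path $P$ is extracted from $P_R$ by trimming the vertices of $P_R$ adjacent to $V(Q)\setminus V(P^\ast)$; by \Cref{lem:nbdonpath} each such vertex has at most $2(d-1)$ neighbors on $P^\ast$, which bounds the trim and, together with $P_L$ being separated from $P_R$ on $P^\ast$, ensures $P$ and $C$ are non-adjacent.

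The heart of the proof, and the main obstacle, is verifying the separator property. For any separator $S$ between $N[P]$ and $N[C]$, I would argue that either $S$ hits every $B\in\mathcal{B}_C:=\{B\in\mathcal{B}:B\cap N[C]\neq\varnothing\}$ or $S$ hits every $B\in\mathcal{B}_P:=\{B\in\mathcal{B}:B\cap N[P]\neq\varnothing\}$: otherwise one may pick $B_P\in\mathcal{B}_P$ and $B_C\in\mathcal{B}_C$ both avoiding $S$, whose connectedness confines them to opposite components of $G-S$, contradicting the strong-bramble identity $B_P\cap B_C\neq\varnothing$. Consequently $\alpha(S)\ge\min\{\alpha\text{-order}(\mathcal{B}_C),\alpha\text{-order}(\mathcal{B}_P)\}$. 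Since $\mathcal{B}_C\supseteq\mathcal{B}^L$ we immediately obtain $\alpha\text{-order}(\mathcal{B}_C)\ge\eta+d-1$. The lower bound on $\alpha\text{-order}(\mathcal{B}_P)$ is more delicate and constitutes the key technical obstacle: one either chooses $B_1,B_2$ from $\mathcal{B}^L\setminus\mathcal{B}^R$ (so that $V(Q)\subseteq B_1\cup B_2$ is disjoint from $N[P_R]$, no trim is needed, and $\mathcal{B}_P=\mathcal{B}^R$ has $\alpha$-order $\ge\eta+d-1$); otherwise $\mathcal{B}^L\cap\mathcal{B}^R$ is $\alpha$-abundant and the straddlers automatically lie in $\mathcal{B}_P\cap\mathcal{B}_C$, in which case one still has to pay for the trim, but \Cref{lem:nbdonpath} provides a hitting set for the lost sub-collection $\mathcal{B}^R\setminus\mathcal{B}_P$ of independence number at most $d-1$, so the budget of $\eta+d-1$ on $\mathcal{B}^R$ absorbs the loss and leaves $\alpha\text{-order}(\mathcal{B}_P)\ge\eta$, completing the proof.
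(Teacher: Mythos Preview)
Your overall architecture mirrors the paper's, and you correctly isolate the separator step as the crux, but the construction of $C$ and the non-adjacency of $P$ and $C$ contain a genuine gap. Choosing $B_1,B_2\in\mathcal{B}^L$ with $v_1\in N[B_1]$ and $v_t\in N[B_2]$ gives no control over where $B_1\cup B_2$ wanders: the connecting path $Q$ may have arbitrarily many vertices, and \Cref{lem:nbdonpath} only bounds the neighbours on $P_R$ of a \emph{single} vertex, so the proposed ``trim'' of $P_R$ is unbounded and you cannot secure $\mathcal{B}_P\supseteq\mathcal{B}^R$ minus a bounded correction. Your fallback of taking $B_1,B_2\in\mathcal{B}^L\setminus\mathcal{B}^R$ does not rescue this, since such elements need not exist with the required attachments at $v_1$ and $v_t$; and even when they do, the shortest induced cycle you extract from $V(P_L)\cup V(Q)$ need not contain all of $P_L$, which is exactly what you need for $\mathcal{B}_C\supseteq\mathcal{B}^L$.

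The paper avoids all of this by reversing the roles and using the \emph{complement} sub-bramble. After locating $j$ with the $\alpha$-order of $\mathcal{B}^j$ in $[\eta,\eta+d-2]$, it sets $P:=v_1Qv_j$ and works with $\mathcal{B}':=\mathcal{B}\setminus\mathcal{B}^{j+1}$, whose members are by definition disjoint from $N[P\cup\{v_{j+1}\}]$ and hence already non-adjacent to $P$. The cycle is built on the \emph{other} side, from a \emph{minimal} subpath $R$ of $Q$ with $N[R]$ hitting $\mathcal{B}'$: minimality produces $B_u,B_v\in\mathcal{B}'$ whose only contact with $R$ is at its endpoints $u,v$, so the closing path through $B_u\cup B_v$ meets $R$ exactly at $u,v$ and $C=R\cup R'$ is an induced cycle containing all of $R$. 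Non-adjacency of $C$ and $P$ is now automatic (no trim at all), $N[R]\subseteq N[C]$ holds by construction, and the separator argument becomes the one-line observation that if $\alpha(S)<\eta$ then $S$ misses some $B_1\in\mathcal{B}^j$ and some $B_2\in\mathcal{B}'$, which intersect each other and both $N[P]$ and $N[C]$.
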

\begin{proof}
By \Cref{thm:alphaduality}, there exists a strong bramble $\mathcal{B}$ with $\alpha$-order at least $2\eta+2d-3$ in $G$.
Using \Cref{lemma:dominatingpath}, we can find a path $Q=v_1v_2\cdots v_m$ such that $N[Q]\cap B\neq \emptyset$ for all $B\in \mathcal{B}$.

First, we divide $Q$ into paths $P$ and $P'$ by removing one vertex in the middle of the path.
For each $i\in [m]$, let $\mathcal{B}^i\coloneqq\{B\in \mathcal{B}\mid B\cap N[v_1Qv_i]\neq \emptyset\}$.
Let $a_i$ be the $\alpha$-order of $\mathcal{B}^i$, and let $A^i$ be a hitting set of $\mathcal{B}^i$ with independence number $a_i$.
We claim that $a_{i}\leq a_{i+1}\leq a_{i}+d-1$ for each $i\in [m-1]$.
The first inequality comes from $\mathcal{B}^i\subseteq \mathcal{B}^{i+1}$, and for the second inequality, observe that $A^i\cup N[v_i]$ is a hitting set for $\mathcal{B}^{i+1}$.
Hence, $a_{i+1}\leq \alpha(A^i\cup N[v_i])\leq a_i+d-1$.
Also, we have $a_1\leq d-1$ since $N[v_1]$ is a hitting set for $\mathcal{B}^1$.
$a_i$ gradually grows from $a_1\leq d-1$ to $a_{m}\geq 2\eta+2d-3$, increasing by at most $d-1$ at each step, so we can choose $j\leq m-2$ such that $\eta \leq a_{j}\leq \eta+d-2$.
Now we let $P\coloneqq P^j$.

Let $\mathcal{B}'\coloneqq \mathcal{B} \setminus \mathcal{B}^{j+1}$, and let $R$ be a minimal subpath of $Q$ such that $N[R]$ is a hitting set of $\mathcal{B}'$.
Then $R$ is non-adjacent with $P$.
Let $a'$ be the $\alpha$-order of $\mathcal{B}'$, and let $A'$ be a hitting set of $\mathcal{B}'$ with independence number $a'$.
Then $A^j\cup A'\cup N[v_{j+1}]$ is a hitting set of $\mathcal{B}$, so we have $a'\geq\eta$.

Next, we construct a cycle $C$ by using the same method as in the proof of Theorem 5.2. from \cite{choi2025excludinginducedwheelminor}.
Let $u$ and $v$ be endpoints of $R$.
As $(d-1)\cdot |V(R)| \geq \alpha(N[R])\geq a' \geq \eta$, we have $|V(R)|\geq  2$, which implies $u\neq v$. 
By the minimality of $R$, there exist $B_u,B_v\in \mathcal{B}'$ such that $N[B_u]\cap V(R)=\{u\}$ and $N[B_v]\cap V(R)=\{v\}$.
As $B_u\cap B_v\neq \emptyset$, we can find an $u$-$v$ path $R'$ whose internal vertices are in $B_u\cup B_v$.
Furthermore, no internal vertex of $R'$ has a neighbor in $R$ other than $u$ and $v$.
Hence, $R\cup R'$ gives an induced cycle $C$.

Now we show that $P$ and $C$ satisfy the desired conditions.
First, $V(C)=V(R)\cup V(R')\subseteq V(P')\cup V(B_u)\cup V(B_v)$ implies that $P$ and $C$ are non-adjacent.
To see that the second condition is met, suppose that there is a separator $S$ between $N[P]$ and $N[C]$ with $\alpha(S)<\eta$.
Then $S$ cannot be a hitting set of $\mathcal{B}^j$ nor of $\mathcal{B}'$ so there are $B_1\in \mathcal{B}^j$ and $B_2\in \mathcal{B}'$ such that $B_1\cap S=B_2\cap S=\emptyset$.
Since $N[P]\cap B_1$, $B_1\cap B_2$, and $B_2\cap N[C]$ are nonempty, there exists a path $X$ between $N[P]$ and $N[C]$ whose vertices are contained in $B_1\cup B_2$.
This contradicts $S$ being a separator between $N[P]$ and $N[C]$.
Therefore, we have $\alpha(S)\geq \eta$.
\end{proof}

Note that by using the same method, it is also possible to show that any separator $S$ between $P$ and $R$ also satisfies $\alpha(S)\geq \eta$ in the above proof.
For later use, state this as a separate theorem.

\begin{theorem}\label{thm:twopaths}
    Let $d\geq 2$ and $\eta\geq d$ be a positive integers and let $G$ be a $K_{1,d}$-free graph with $\atw(G)\geq 8\eta+8d-14$.
    Then $G$ contains two non-adjacent induced paths $P_1$ and $P_2$ such that if $S$ is a separator between $N[P_1]$ and $N[P_2]$ in $G$, then $\alpha(S)\geq \eta$.
\end{theorem}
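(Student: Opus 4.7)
The plan is to run the proof of \Cref{thm:pathandcycle} essentially verbatim, but to stop before the cycle-construction step and take the second non-adjacent path directly. Indeed, the argument for \Cref{thm:pathandcycle} already produces a subpath $R$ of the dominating path $Q$ that is non-adjacent to $P$ and whose closed neighborhood meets every bramble in $\mathcal{B}\setminus\mathcal{B}^{j+1}$; the cycle $C$ was built from $R$ only because the statement required a cycle. Since we are now content with a second path, the $R$ from that proof serves directly as $P_2$.

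Concretely, I would first invoke \Cref{thm:alphaduality} using $8\eta+8d-14=4(2\eta+2d-3)-2$ to extract a strong bramble $\mathcal{B}$ of $\alpha$-order at least $2\eta+2d-3$, and apply \Cref{lemma:dominatingpath} to obtain a path $Q=v_1v_2\cdots v_m$ with $N[Q]\cap B\neq\emptyset$ for all $B\in\mathcal{B}$. Setting $\mathcal{B}^i\coloneqq\{B\in\mathcal{B}\mid B\cap N[v_1Qv_i]\neq\emptyset\}$ and $a_i$ its $\alpha$-order, the same bounds as before, namely $a_1\leq d-1$, $a_m\geq 2\eta+2d-3$, and $a_{i+1}\leq a_i+(d-1)$ (since $A^i\cup N[v_i]$ hits $\mathcal{B}^{i+1}$), yield an index $j\leq m-2$ with $\eta\leq a_j\leq \eta+d-2$. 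I would define $P_1\coloneqq v_1Qv_j$. Next, letting $\mathcal{B}'\coloneqq\mathcal{B}\setminus\mathcal{B}^{j+1}$, I would take $P_2$ to be a minimal subpath $R$ of $Q$ such that $N[R]$ is a hitting set of $\mathcal{B}'$. Because no element of $\mathcal{B}'$ meets $N[v_1Qv_{j+1}]$ and $Q$ dominates $\mathcal{B}$, minimality forces $R\subseteq v_{j+2}Qv_m$, so $P_1$ and $P_2$ are non-adjacent induced subpaths of $Q$. The $\alpha$-order of $\mathcal{B}'$ is at least $\eta$, since otherwise combining hitting sets for $\mathcal{B}^j$ and $\mathcal{B}'$ with $N[v_{j+1}]$ would produce a hitting set for $\mathcal{B}$ of independence number strictly below $2\eta+2d-3$.

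It then remains to verify the separator condition, which is the same bramble-connectivity trick used at the end of \Cref{thm:pathandcycle}. Assume for contradiction that $S$ separates $N[P_1]$ from $N[P_2]$ with $\alpha(S)<\eta$. Since the $\alpha$-orders of both $\mathcal{B}^j$ and $\mathcal{B}'$ are at least $\eta$, the set $S$ hits neither family, and so there exist $B_1\in\mathcal{B}^j$ and $B_2\in\mathcal{B}'$ disjoint from $S$. Then $B_1\cap N[P_1]$, $B_1\cap B_2$, and $B_2\cap N[P_2]$ are all nonempty (the middle intersection by the strong bramble property), and the connected subgraphs $G[B_1]$ and $G[B_2]$ together provide a walk from $N[P_1]$ to $N[P_2]$ in $G-S$, contradicting that $S$ is a separator. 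I do not anticipate any substantive obstacle: the statement is a strict weakening of \Cref{thm:pathandcycle} in which the cycle-building step is simply dropped, and the only verification not already carried out there is the trivial observation that $R$ itself is an induced path.
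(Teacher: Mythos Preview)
Your proposal is correct and is exactly what the paper does: it states \Cref{thm:twopaths} immediately after the proof of \Cref{thm:pathandcycle} with the remark that ``by using the same method'' one sees that the path $R$ already constructed there serves as $P_2$, without the cycle-closing step. Your write-up simply makes this explicit, including the bramble-connectivity separator argument, and all details check out.
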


Lastly, we remark that by slightly modifying the proof, we can show that we can change the second condition of $\Cref{thm:pathandcycle}$ to be about a separator between $P$ and $C$, instead of $N[P]$ and $N[C]$.

\begin{theorem}\label{thm:pathandcycle2}
    Let $d\geq 2$ and $\eta\geq d$ be positive integers and let $G$ be a $K_{1,d}$-free graph with $\atw(G)\geq 16d\eta$.
    Then $G$ contains an induced path $P$ and an induced cycle $C$ such that
    \begin{itemize}
        \item $P$ and $C$ are non-adjacent, and
        \item if $S$ is a separator between $P$ and $C$ in $G$, then $\alpha(S)\geq \eta$.
    \end{itemize}
\end{theorem}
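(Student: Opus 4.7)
The plan is to run the proof of \cref{thm:pathandcycle} almost verbatim, and modify only the last step so that the path one constructs to contradict the separator is a $P$-$C$ path (interior avoiding $V(P)\cup V(C)$) rather than an $N[P]$-$N[C]$ path. The generous hypothesis $\atw(G)\geq 16d\eta$ leaves ample slack for the numerical bookkeeping.

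I would first invoke \cref{thm:alphaduality} to convert the hypothesis into a strong bramble $\mathcal{B}$ of $\alpha$-order at least $4d\eta+1$, then apply \cref{lemma:dominatingpath} to obtain an induced dominating path $Q=v_1\cdots v_m$. Define $\mathcal{B}^i\coloneqq\{B\in\mathcal{B}\mid B\cap N[v_1Qv_i]\neq\emptyset\}$ and let $a_i$ be its $\alpha$-order; using $a_1\leq d-1$ and $a_{i+1}\leq a_i+d-1$ exactly as in the proof of \cref{thm:pathandcycle}, I would pick $j$ minimal with $a_j\geq\eta$, set $P\coloneqq v_1Qv_j$, and form the induced cycle $C$ exactly as before from a minimal subpath $R$ of $Q$ whose closed neighborhood hits $\mathcal{B}'\coloneqq\mathcal{B}\setminus\mathcal{B}^{j+1}$, closing $R$ up by a $u$-$v$ path $R'$ running through bramble elements $B_u,B_v$. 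The inequality $\alpha(\mathcal{B})\leq a_j+a'+(d-1)$, combined with $\alpha(\mathcal{B})\geq 4d\eta$ and $a_j\leq \eta+d-2$, yields $a'\geq\eta$ with considerable slack; the induced-cycle-ness of $C$ and non-adjacency of $P$ and $C$ are inherited from the original argument.

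For the separator condition, I would suppose for contradiction that $S$ separates $P$ from $C$ with $\alpha(S)<\eta$ and produce a forbidden path. Since $\alpha(S)$ is smaller than both $a_j$ and $a'$, one can pick $B_1\in\mathcal{B}^j$ and $B_2\in\mathcal{B}'$ disjoint from $S$, and $B_1\cap B_2\neq\emptyset$ by the strong-bramble axiom. The induced subgraph $H\coloneqq G[V(P)\cup V(C)\cup B_1\cup B_2]$ is then connected, because $P$, $C$, $B_1$, $B_2$ are each connected, $B_1\cap B_2\neq\emptyset$, and $B_1$ (resp.\ $B_2$) contains a vertex in $N[V(P)]$ (resp.\ $N[V(C)]$), which forces a vertex of $B_1$ to coincide with or be adjacent to $V(P)$ (resp.\ $B_2$ to $V(C)$) inside $H$. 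Pick any simple path in $H$ from $V(P)$ to $V(C)$, let $p$ be its last vertex in $V(P)$, and let $c$ be the first vertex in $V(C)$ occurring after $p$; by these extremal choices, the subpath from $p$ to $c$ has interior disjoint from $V(P)\cup V(C)$, hence contained in $B_1\cup B_2$ and disjoint from $S$. Non-adjacency of $P$ and $C$ forces length at least two, so the interior is nonempty, giving a $P$-$C$ path through $V(G)\setminus S$ that contradicts the choice of $S$.

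The main obstacle, compared to the proof of \cref{thm:pathandcycle}, is exactly that one must rule out the interior of the forbidden path lying in $V(P)\cup V(C)$ — a strictly stronger requirement than ruling it out of $N[V(P)]\cup N[V(C)]$, which is all that was needed in the original argument. The last-in-$V(P)$/first-in-$V(C)$ extraction handles this cleanly, and no new structural ingredients are required beyond what is already used to prove \cref{thm:pathandcycle}; the only cost is the constant-factor-in-$d$ increase in the $\atw$ threshold, needed to absorb the factor losses introduced by choosing $j$ so that $a_j$ and $a'$ both comfortably exceed $\eta$.
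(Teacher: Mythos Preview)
Your argument is correct, and it is genuinely different from (and cleaner than) the paper's. The paper does not use your last-in-$V(P)$/first-in-$V(C)$ extraction. Instead it inflates the target $\alpha$-order of the two sub-brambles to roughly $\eta+(d-1)(2\eta-2)$, then replaces $S$ by $S'\coloneqq S\cup N[S\cap P]$ on the $P$-side and $S''\coloneqq S\cup N[S\cap C]$ on the $C$-side; since $\alpha(S)<\eta$ forces $|S\cap P|,|S\cap C|\le 2\eta-2$, both $S'$ and $S''$ still have independence number below the inflated threshold, so one can pick $B_1$ avoiding $S'$ and $B_2$ avoiding $S''$. The path in $B_1\cup B_2$ from $N[P]$ to $N[C]$ is then extended by one step into $V(P)$ and $V(C)$, and the augmentation guarantees these extension vertices lie outside $S$. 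Your approach sidesteps all of this by throwing $V(P)$ and $V(C)$ into $H$ and letting the extremal choice of $p$ and $c$ do the work; it exploits (correctly) that under the paper's definition of separator only the \emph{interior} of the witnessing path must avoid $S$.

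Two small remarks. First, from $\atw(G)\ge 16d\eta$ and \cref{thm:alphaduality} you get $\alpha$-order at least $4d\eta$, not $4d\eta+1$; this is immaterial. Second, your closing sentence overstates the cost: your argument only needs $a_j\ge\eta$ and $a'\ge\eta$, which is precisely what the proof of \cref{thm:pathandcycle} already delivers under the weaker hypothesis $\atw(G)\ge 8\eta+8d-14$. So in fact your route proves the statement with the same bound as \cref{thm:pathandcycle}, and the jump to $16d\eta$ is needed only for the paper's augmentation strategy, not for yours.
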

\begin{proof}
Observe that $16d\eta\geq 4(2\eta+2(d-1)(2\eta-2)+2d-3)-2$.
By \Cref{thm:alphaduality}, there exists a strong bramble $\mathcal{B}$ with $\alpha$-order at least $2\eta+2(d-1)(2\eta-2)+2d-3$.
Using \Cref{lemma:dominatingpath}, we can find a path $Q=v_1v_2\cdots v_m$ such that $N[Q]\cap B\neq \emptyset$ for all $B\in \mathcal{B}$.

We construct non-adjacent path $P$ and cycle $C$ similarly as in the proof of \Cref{thm:pathandcycle}:
$N[P]$ is a hitting set of $\mathcal{B}^P\subseteq \mathcal{B}$ and $N[C]$ is a hitting set of $\mathcal{B}^C\subseteq \mathcal{B}$ where $\alpha$-order of $\mathcal{B}^P$ and $\mathcal{B}^C$ are at least $\eta+(d-1)(2\eta+2)$. 

Suppose that there is a separator $S$ between $P$ and $C$ with $\alpha(S)<\eta$.
Let $S'=S\cup N[S\cap P]$.
Since $|S\cap P|\leq 2\eta-2$, we have $\alpha(S')\leq \alpha(S)+\alpha(N[S\cap P])<\eta+(d-1)(2\eta-2)$.
Hence $S'$ cannot be a hitting set of $\mathcal{B}^j$.
Therefore, there is $B_1\in \mathcal{B}^j$ such that $B_1\cap S'=\emptyset$.
Similarly, let $S''=S\cup N[S\cap C]$.
Then $\alpha(S'')<\eta+(d-1)(2\eta-2)$, so $S''$ cannot be a hitting set of $\mathcal{B}'$.
Therefore, there is $B_2\in \mathcal{B}'$ such that $B_2\cap S''=\emptyset$.

Since $N[P]\cap B_1$, $B_1\cap B_2$, and $B_2\cap N[C]$ are nonempty, we can find a path $R$ between $N[P]$ and $N[C]$ whose vertices are contained in $B_1\cup B_2$.
Let $x\in N[P]$ be an endpoint of $R$.
Suppose that $x\notin V(P)$.
As $x\notin N[S\cap P]$, there is $x'\in V(P)\setminus S$ adjacent to $x$.
If $x'$ is not already in $R$, we may extend $R$ by adding $x'$ as a new endpoint.
Therefore, we may assume that $X \cap P\neq \emptyset$.
Similarly, we may assume that $X\cap C\neq \emptyset$.
This contradicts to $S$ being a separator between $P$ and $C$.
Therefore, we have $\alpha(S)\geq \eta$.
\end{proof}

\section{Finding a $k$-$H$-rope ladder}\label{sec:MainTheorem}
In this section, we prove that if we have an induced path $P$ and an induced subgraph $H$ in a graph $G$ such that $P$ and $H$ are non-adjacent and any separator between them has large independence number, then we can find a $k$-$H$-rope ladder as an induced subgraph in $G$.
Note that $H$ is fixed here, but the path $P$ does not \textsl{have} to be the rail path of the final $k$-$H$-rope ladder.

\begin{theorem}\label{thm:somethingpath}
    Let $\ell$ and $d\geq 2$ be positive integers and let $G$ be a $K_{1,d}$-free graph.
    Let $P$ be an induced path of $G$ and let $H$ be a subgraph of $G$ that is not adjacent with $P$.
    If every separator $S$ between $N[P]$ and $N[H]$ satisfies $\alpha(S)\geq \eta(\ell,d)\coloneqq 8(d-1)\ell^{\ell+1}$, then there exists an $\ell$-$H$-rope ladder $L$ as an induced subgraph in $G$, where $H$ is a rail subgraph of $L$.
\end{theorem}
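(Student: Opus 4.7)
The plan is to induct on $\ell$, peeling off rung paths one at a time. For the base case $\ell = 1$, since $\eta(1,d) = 8(d-1) > 0$ the empty set cannot be a separator, so $G - V(P) - V(H)$ contains a path from $N(P)$ to $N(H)$. Choose $\Phi^1$ to be shortest among such induced paths; minimality ensures that no internal vertex of $\Phi^1$ is adjacent to $V(P) \cup V(H)$ (a shortcut would arise), so $\Phi^1$ yields a valid $1$-$H$-rope ladder.

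For the inductive step, assume the statement for $\ell - 1$ and build the first rung $\Phi^1$ as above, with endpoints $\phi_2^1 \in N(P)$ and $\phi_1^1 \in N(H)$. By \cref{lem:nbdonpath}, $\phi_2^1$ has at most $2(d-1)$ neighbors on $V(P)$, so removing $N[\Phi^1]$ from $G$ breaks $V(P)$ into at most $s := 2d-1$ induced subpaths $P^{(1)}, \ldots, P^{(s)}$ of $P$ inside $G' := G - N[\Phi^1]$. The key pigeonhole argument selects an index $i^\ast$ for which every separator in $G'$ between $N_{G'}[P^{(i^\ast)}]$ and $N_{G'}[V(H) \cap V(G')]$ has $\alpha$-order at least $\eta(\ell-1, d)$; otherwise each $P^{(i)}$ would admit a separator $S_i$ in $G'$ with $\alpha(S_i) < \eta(\ell-1, d)$, and the union $\bigcup_i S_i \cup N[\Phi^1]$ (augmented by any removed $H$-vertices) would separate $N[P]$ from $N[H]$ in $G$, contradicting $\eta(\ell, d) = 8(d-1)\ell^{\ell+1}$ via the subadditivity bound $\alpha\bigl(\bigcup_i S_i \cup N[\Phi^1]\bigr) \leq s \cdot \eta(\ell-1, d) + \alpha(N[\Phi^1])$, provided $\alpha(N[\Phi^1])$ is suitably controlled. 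Applying induction to the triple $(G', P^{(i^\ast)}, H \cap V(G'))$ then yields $\ell - 1$ further rungs $\Phi^2, \ldots, \Phi^\ell$ in $G'$, automatically non-adjacent to $\Phi^1$ and to one another; together with $\Phi^1$ and a rail path obtained from $P^{(i^\ast)}$ (or a reroute of it, using the flexibility the statement grants to swap $P$ for another induced path), they form the required $\ell$-$H$-rope ladder.

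The main obstacle is controlling $\alpha(N[\Phi^1])$. In a $K_{1,d}$-free graph, unlike in a bounded-degree graph, the closed neighborhood of even a short induced path can have independence number up to $d \cdot |V(\Phi^1)|$ --- the clique with private independent pendants from the introduction is the canonical obstruction. A globally shortest $\Phi^1$ can therefore still be long enough that $\alpha(N[\Phi^1])$ exceeds the slack $\eta(\ell, d) - s \cdot \eta(\ell-1, d) \approx 8(d-1)\bigl(\ell^{\ell+1} - (2d-1)(\ell-1)^{\ell}\bigr)$, so $\Phi^1$ must be chosen with more care: by pre-cleaning dense clique regions along candidate paths, by rerouting locally through sparser portions of $G$, or by exploiting the freedom to replace $P$ with a different induced path so that $\Phi^1$ is localized to a region of bounded $\alpha$-neighborhood. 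A secondary technicality is that when $H$ is an arbitrary subgraph the set $V(H) \cap N[\Phi^1]$ need not be small, so extra care is needed in the recursive lifting to ensure that subsequent rungs do not reintroduce adjacency to removed $H$-vertices; this is harmless when $H$ is an induced path or cycle, the main case of interest. The iterative loss of roughly a factor of $\ell$ at each of the $\ell$ induction steps accounts for the tower-like growth $\ell^{\ell+1}$ in $\eta(\ell, d)$.
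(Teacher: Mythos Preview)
Your inductive plan has the right spirit but a genuine gap that you yourself identify: there is no way to bound $\alpha(N[\Phi^1])$ for a single shortest $N(P)$--$N(H)$ path, and none of the suggested fixes (pre-cleaning cliques, local rerouting, replacing $P$) actually works in a $K_{1,d}$-free graph. The clique-with-pendants example shows that even a globally shortest $\Phi^1$ can have $\alpha(N[\Phi^1])$ proportional to its length, and its length is uncontrolled. The pigeonhole inequality $s\cdot\eta(\ell-1,d)+\alpha(N[\Phi^1])<\eta(\ell,d)$ therefore cannot be established, and the induction stalls at the very first step. There is also a second, less advertised gap: the rail path returned by the recursive call on $(G',P^{(i^\ast)},H\cap V(G'))$ need not be $P^{(i^\ast)}$ itself, so $\Phi^1$ (whose $P$-end is adjacent only to the original $P$) may fail to attach to the new rail path at all.

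The paper's proof avoids both problems by abandoning the ``remove $N[\Phi^1]$ and recurse on a subpath of $P$'' scheme entirely. Instead of one path, it greedily packs a \emph{maximal} family $Q_1^1,\dots,Q_{m_1}^1$ of pairwise non-adjacent shortest $N[H]$--$N[P]$ paths (so $m_1<\ell$ or we are done). The crucial observation is that, because each $Q_j^1$ is shortest at the time it was chosen, no vertex of $N[H]$ can be adjacent to $Q_j^1$ except near its endpoints; hence the set $S^1$ of all endpoints and second-endpoints (at most $4m_1<4\ell$ vertices) satisfies $\alpha(N[S^1])\le 4(d-1)\ell$, and $N[S^1]$ together with a certain reachable set $N^1\subseteq N[H]$ is a separator. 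The bound is on $|S^1|$, not on $\alpha(N[\mathcal{Q}^1])$, and this is what makes the arithmetic close. The next iteration does not target a subpath of $P$ but rather $\overline{\mathcal{Q}}^1=N[\mathcal{Q}^1]\setminus N[S^1]$: one looks for many non-adjacent shortest paths from $N^1$ to $\overline{\mathcal{Q}}^1$, and so on for $\ell+1$ rounds, each time losing only $\alpha(N[S^i])\le 4(d-1)\ell^i$. If no round produces $\ell$ rungs landing on a single previous path, then after $\ell+1$ rounds one selects one path $R^i$ per round with $R^{i+1}$ adjacent to $R^i$, uses the $H$-side initial segment of each $R^i$ as a rung $\Phi^i$, and concatenates the remaining tails $\widecheck{w}^i R^{i+1}$ into a fresh induced rail path $P'$. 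This simultaneously manufactures the rail path and the rungs, so the attachment issue never arises.
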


\medskip
If $H$ is also a path, the result of above theorem forms an $\ell$-shuffled rope ladder.
Hence, we may deduce \Cref{thm:themainresult} by combining \Cref{thm:somethingpath} with \Cref{lem:cleanshuffledropeladder} and \Cref{thm:twopaths} as follows.

\begin{proof}[Proof of \Cref{thm:themainresult}]
Let $\tau(k,d)\coloneqq 8 \eta(\nu(k,d),d)+8d-14$.
Suppose that a $K_{1,d}$-free graph $G$ has $\atw(G)\geq \tau(k,d)$.
Then \Cref{thm:twopaths} implies the existence of two non-adjacent induced paths $P_1$ and $P_2$ such that every separator $S$ between $N[P_1]$ and $N[P_2]$ satisfies $\alpha(S)\geq \eta(\nu(k,d),d)$.
Next, by \Cref{thm:somethingpath}, there exists a $\nu(k,d)$-shuffled rope ladder as an induced subgraph of $G$.
Lastly, we can use \Cref{lem:cleanshuffledropeladder} to find a $k$-rope ladder as an induced subgraph of $G$, which has the $k$-skinny ladder as an induced minor.
\end{proof}

Before we dive into the proof of \Cref{thm:somethingpath}, we briefly sketch an outline of our approach. 
We wish to inductively construct an induced subgraph between $H$ and $P$, each time adding several disjoint paths to the current subgraph.
For each newly added path, one endpoint of it will be adjacent to $H$, the other endpoint is adjacent to one of the paths constructed in the previous step, and internal vertices are non-adjacent to other paths.
If we manage to find a certain number of such paths, we will have obtained our desired structure by using these paths as rail paths.
Otherwise, we find ourselves in a situation that permits the use of the condition that a separator between $N[H]$ and $N[P]$ has large independent number to move on to the next step.
If this process repeats $\ell+1$ times, we will have found our desired structure by selecting one of the paths from each step and combining them into the second rail path.
To guarantee that the paths we find are indeed induced and non-adjacent to each other in the way we want, when we find non-adjacent paths, we greedily choose shortest paths, and manage the endpoints and second-endpoints of these paths separately.

\begin{proof}[Proof of \Cref{thm:somethingpath}]
We may assume that $P$ and $H$ are in the same component of $G$, since otherwise $\emptyset$ is a separator between $N[P]$ and $N[H]$ with independence number $0$.
Furthermore, we may assume that $G$ is connected.
Also, if $\ell=1$, we may choose a shortest path from $H$ and $P$ to be a rung path, so we may further assume that $\ell\geq 2$.
Let $G^0=G-(N[H]\cup N[P])$, and let $N^0\subseteq N[H]$ be the set of vertices reachable from $N[P]$ through $G^0$.
Then we have $\alpha(N^0)\geq \eta(\ell,d)$ since $N^0$ is a separator between $N[P]$ and $N[H]$.
As the first step of the construction is slightly different from the subsequent steps, and for a better overview of the proof, we describe the first step separately.
\medskip

\paragraph{First step.}
If there are $\ell$ pairwise non-adjacent paths between $N[H]$ and $N[P]$, we may choose them as rung paths and choose $P$ as a rail path to find the desired induced subgraph. 
Thus suppose that there are at most $\ell-1$ such paths.
We greedily choose such paths one by one, selecting a shortest possible path that is non-adjacent to the already chosen paths, until such a selection is no longer possible.
Let $Q_1^1,\cdots,Q_{m_1}^1$ be the chosen paths, and let $\mathcal{Q}^1=\bigcup_{j\in [m_1]}V(Q_j^1)$.
Then we have $m_1<\ell$, and $N[\mathcal{Q}^1]$ separates $N[H]$ and $N[P]$.
For each $j\in[m_1]$ let $v_j^1\in N[H]$ be one endpoint of $Q_j^1$, and let $w_j^1$ be the other endpoint.
If the length of $Q_j^1$ is not $0$, let $\widehat{v}_j^1$ and $\widehat{w}_j^1$ be the second-endpoints of $Q_j^1$, adjacent to $v_j^1$ and $w_j^1$, respectively.
Note that we have $w_j^1\not\in N[H]$ unless $w_j^1=v_j^1$.
Let $S^1$ be the set of endpoints and second-endpoints of the $Q^{1}_j$'s.
We do not want the endpoints of distinct paths from our selection to be adjacent, so we will handle these vertices in each path separately.
In particular, we forbid the paths found in later steps from being adjacent to the vertices in $S^1$.
Let $\overline{\mathcal{Q}}_j^1\coloneqq N[Q_j^1]\setminus N[S^1]$ and let $\overline{\mathcal{Q}}^1\coloneqq \bigcup_{j\in [m_1]}\overline{\mathcal{Q}}_j^1$.
Let $N^1\subseteq N^0\setminus N[S^1]$ be the set of all vertices in $N^0\setminus N[S^1]$ that are reachable from $\overline{\mathcal{Q}}^1$ through $G^1\coloneqq G^0-\bigcup_{j\in [m_1]}N[Q_j^1]$.

\begin{claim}\label{claimc}
    $N^1\cup N[S^1]$ is a separator between $N[H]$ and $N[P]$.
\end{claim}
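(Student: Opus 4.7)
The plan is to argue by contradiction. Suppose $N^1\cup N[S^1]$ fails to separate $N[H]$ from $N[P]$, so there exists a path $R$ from some $x\in N[H]$ to some $y\in N[P]$ whose internal vertices lie in $V(G)\setminus(N[H]\cup N[P]\cup N^1\cup N[S^1])$; I aim to derive a contradiction by showing that $x$ itself must already lie in $N^1$.

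The key structural observation is the decomposition $N[\mathcal{Q}^1]=\overline{\mathcal{Q}}^1\cup N[S^1]$, which is immediate from the definition $\overline{\mathcal{Q}}^1=N[\mathcal{Q}^1]\setminus N[S^1]$ together with the fact that $S^1\subseteq V(\mathcal{Q}^1)$ forces $N[S^1]\subseteq N[\mathcal{Q}^1]$. Because the greedy selection of $Q^1_1,\ldots,Q^1_{m_1}$ terminated, no path from $N[H]$ to $N[P]$ is non-adjacent to all $Q^1_j$; equivalently, every such path meets $N[\mathcal{Q}^1]$. Hence $R$ meets $N[\mathcal{Q}^1]$, and since $R$ avoids $N[S^1]$ this forces $R$ to enter $\overline{\mathcal{Q}}^1$. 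Let $u$ be the first vertex of $R$, starting from $x$, that lies in $\overline{\mathcal{Q}}^1$.

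Now consider the subpath $R'$ of $R$ from $x$ to $u$. Its internal vertices avoid $N[H]\cup N[P]$ (inherited from the internal vertices of $R$), avoid $\overline{\mathcal{Q}}^1$ (by the minimality of $u$), and avoid $N[S^1]$ (since $R$ does); by the decomposition above they therefore avoid $N[\mathcal{Q}^1]$ entirely, and so lie in $V(G^1)=V(G)\setminus(N[H]\cup N[P]\cup N[\mathcal{Q}^1])$. Thus $R'$ witnesses that $x$ is reachable from $\overline{\mathcal{Q}}^1$ through $G^1$. Meanwhile $R$ itself witnesses that $x$ is reachable from $N[P]$ through $G^0$, placing $x\in N^0$; and $x\notin N[S^1]$ because $R$ avoids this set. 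All three defining conditions of $N^1$ are satisfied, so $x\in N^1$, contradicting the fact that $R$ starts outside $N^1\cup N[S^1]$.

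The only mildly delicate part of the argument is this membership-tracking, in particular identifying $N[\mathcal{Q}^1]\setminus N[S^1]$ with $\overline{\mathcal{Q}}^1$ so that ``meeting $N[\mathcal{Q}^1]$ while avoiding $N[S^1]$'' really means ``entering $\overline{\mathcal{Q}}^1$''. Degenerate configurations, such as $R$ having length $0$ (which forces $x\in N[H]\cap N[P]$ and hence $x\in N[\mathcal{Q}^1]$ by greedy termination) or $u$ coinciding with $x$ or $y$, collapse to trivial reachability via length-$0$ paths and require no separate treatment.
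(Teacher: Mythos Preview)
Your proof is correct and follows essentially the same contradiction argument as the paper: assume a path $R$ from $N[H]$ to $N[P]$ avoids $N^1\cup N[S^1]$, use greedy maximality to force $R$ into $N[\mathcal{Q}^1]$, hence into $\overline{\mathcal{Q}}^1$, and conclude that the $N[H]$-endpoint lies in $N^1$. You are in fact more careful than the paper in two respects: you explicitly track the first entry point $u$ into $\overline{\mathcal{Q}}^1$ and verify that the initial segment has internal vertices in $G^1$, and you separately check the membership $x\in N^0$, which the paper's proof leaves implicit.
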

Suppose the claim is false and let $\widetilde{Q}$ be a path from $N[H]$ to $N[P]$ that does not meet $N^1\cup N[S^1]$.
Let $x\in N[H]$ be one endpoint of $\widetilde{Q}$.
By the maximality of $m_1$, $\widetilde{Q}$ should be adjacent to some $Q_j^1$.
However, $\widetilde{Q}$ does not contain any vertices from $N[S^1]$, so $\widetilde{Q}$ must meet $\overline{\mathcal{Q}}^1$.
Then $x$ is reachable from $\overline{\mathcal{Q}}^1$ through $G^1$, which contradicts $x\not\in N^1$.
This proves the claim.
\medskip

By our assumption on the independence number of separators between $N[H]$ and $N[P]$, we have $\alpha(N^1\cup N[S^1])\geq \eta(\ell,d)$, which gives
\[\alpha(N^1)\geq \eta(\ell,d)-\alpha(N[S^1])\geq \eta(\ell,d)-4(d-1)\ell>0.\]
In particular, we have $N^1\neq \emptyset$.

\begin{claim}\label{claime}
    $\overline{\mathcal{Q}}^1\cap N[H]=\overline{\mathcal{Q}}^1\cap N[P]=\emptyset$.
\end{claim}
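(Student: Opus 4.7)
The plan is to prove the claim by contradiction, leveraging the greedy choice of each $Q_j^1$ as a \emph{shortest} path from $N[H]$ to $N[P]$ non-adjacent to $Q_1^1,\ldots,Q_{j-1}^1$. Given a hypothetical $x \in \overline{\mathcal{Q}}^1 \cap N[H]$, the goal is to produce a strictly shorter path that would have been a legal replacement for one of the $Q_i^1$; this violates greediness and gives the contradiction. The case $\overline{\mathcal{Q}}^1 \cap N[P] = \emptyset$ follows from the symmetric argument, interchanging the roles of $v_j^1$ and $w_j^1$.

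The engine of the argument is a preliminary observation: no internal vertex of any $Q_j^1$ lies in $N[H] \cup N[P]$. Indeed, if $y$ were an internal vertex of $Q_j^1$ with $y \in N[H]$, then the proper subpath $yQ_j^1 w_j^1$ would be a legal candidate at the $j$-th greedy step (as a subpath its closed neighborhood is contained in that of $Q_j^1$, so it inherits non-adjacency to $Q_1^1,\ldots,Q_{j-1}^1$) and strictly shorter than $Q_j^1$; the $N[P]$ side is symmetric. This already handles the sub-case $x \in V(Q_j^1)$: the endpoints of $Q_j^1$ lie in $S^1 \subseteq N[S^1]$, so $x \notin N[S^1]$ forces $x$ to be internal, contradicting $x \in N[H]$.

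The substantive sub-case is $x \in N(V(Q_j^1)) \setminus V(Q_j^1)$. The main obstacle here is that $x$ could, a priori, be adjacent to some earlier $Q_{j'}^1$, which would prevent us from freely splicing $x$ onto $Q_j^1$ as a new endpoint. I would sidestep this by selecting $i \in [m_1]$ to be the \emph{minimal} index with $x \in N[Q_i^1]$, so that automatically $x \notin N[Q_{i'}^1]$ for every $i' < i$. Since $x \notin N[S^1]$, every neighbor of $x$ on $Q_i^1$ must be an internal non-second-endpoint vertex, which in particular forces $Q_i^1$ to have length at least $4$; letting $y_{i^*}$ be the neighbor of $x$ on $Q_i^1$ closest to $w_i^1$ then gives $i^* \geq 2$.

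It remains to verify that $R \coloneqq x\, y_{i^*}\, y_{i^*+1}\, \cdots\, w_i^1$ contradicts the greedy choice of $Q_i^1$. It is an induced path (the choice of $y_{i^*}$ rules out any chord from $x$ to a later vertex of $Q_i^1$, and $x \notin N[S^1]$ rules out the edge $xw_i^1$), its endpoints lie in $N[H]$ and $N[P]$, and its internal vertices are internal in $Q_i^1$ and hence avoid $N[H] \cup N[P]$ by the preliminary observation. Its length is strictly smaller than that of $Q_i^1$ because $i^* \geq 2$. The interior of $R$ sits inside $V(Q_i^1)$, which is non-adjacent to $Q_{i'}^1$ for all $i' < i$, and the extra vertex $x$ satisfies $x \notin N[Q_{i'}^1]$ for $i' < i$ by the minimality of $i$; hence $R$ is non-adjacent to $Q_1^1,\ldots,Q_{i-1}^1$. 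Thus $R$ would have been a strictly shorter legal candidate at the $i$-th greedy step, contradicting the choice of $Q_i^1$.
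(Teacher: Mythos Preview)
Your proof is correct and follows essentially the same approach as the paper's: pick the minimal index $i$ with $x\in N[Q_i^1]$, take the neighbor $y$ of $x$ on $Q_i^1$ closest to $w_i^1$, and observe that the path $xyQ_i^1w_i^1$ is a strictly shorter legal candidate at step $i$, contradicting the greedy choice. Your version is more explicit (separating out the case $x\in V(Q_j^1)$ and verifying inducedness and the internal-vertex condition), but the argument is the same.
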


Suppose there exists $x\in \overline{\mathcal{Q}}^1\cap N[H]$.
Then there is some $j$ such that $x\in N[Q_j^1]-N[S^1]$.
Choose the minimum such $j$.
Let $y$ be a vertex in $N(x)\cap Q_j^1$ that is closest to $w_j^1$.
Since $x\not\in S^1$, we have $y\not\in\{ v_j^1,\widehat{v}_j^1\}$.
Then $w_j^1 Q_j^1 y x$ is a path between $N[H]$ and $N[P]$, and by the minimality of $j$, it is not adjacent with $Q_1^1,\cdots,Q_{j-1}^1$.
Also $w_j^1Q_j^1wx$ has shorter length than $Q_j^1$, but this is a contradiction, as we chose $Q_j^1$ to be a shortest possible path.
Hence $\overline{\mathcal{Q}}^1\cap N[H]=\emptyset$.
Similarly, we can show that $\overline{\mathcal{Q}}^1\cap N[P]=\emptyset$.
This proves the claim.
\medskip

We now continue with the remaining iterative procedure for our construction.

\paragraph{The $i$-th step ($2\leq i\leq \ell+1$).}
Let us assume that in the $(i-1)$-th step, we have defined the following eight objects: (Consider $\mathcal{Q}^0=P$, $\overline{\mathcal{Q}}^{0}=N[P]$ and $S^{0}=\emptyset$ as above.)
\begin{enumerate}[i)]
    \item $Q_1^{i-1},\cdots,Q_{m_{i-1}}^{i-1}$ are pairwise non-adjacent paths between $N[H]$ and $N[\overline{\mathcal{Q}}^{i-2}]$ through $G^{i-2}$, found by choosing a shortest path that is non-adjacent with any of the previously chosen paths each time.
    \item $\mathcal{Q}^{i-1}=\bigcup_{j\in [m_{i-1}]}V(Q_j^{i-1})$
    \item $v_{j}^{i-1}\in N[H]$ and $w_{j}^{i-1}$ are the endpoints of $Q_j^{i-1}$.
    If the length of $Q_j^{i-1}$ is not $0$, $\widehat{v}_{j}^{i-1}$ and $\widehat{w}_{j}^{i-1}$ are the second-endpoints of $Q_j^{i-1}$ adjacent to $v_{j}^{i-1}$ and $w_{j}^{i-1}$, respectively.
    \item $S^{i-1}=S^{i-2}\cup \bigcup_{j\in [m_{i-1}]}\{v_j^{i-1},w_{j}^{i-1},\widehat{v}_j^{i-1},\widehat{w}_j^{i-1}\}$ is the set of endpoints and second-endpoints of the $Q^{i-1}_j$ (note that $\widehat{v}_j^{i-1},\widehat{w}_j^{i-1}$ might not always exist) together with all such vertices chosen in the previous steps.
    \item $\overline{\mathcal{Q}}_j^{i-1}=N[Q_j^{i-1}]\setminus[S^{i-1}]$
    \item $\overline{\mathcal{Q}}^{i-1}=\bigcup_{j\in [m_{i-1}]}\overline{Q}_j^{i-1}$
    \item $G^{i-1}=G^{i-2}-\bigcup_{j\in [m_{i-1}]} N[Q^{i-1}_j]$
    \item $N^{i-1}\subseteq N^{i-2}\setminus N[S^{i-1}]$ is the set of vertices that are reachable from $\overline{\mathcal{Q}}^{i-1}$ through $G^{i-1}$.
\end{enumerate}

We further assume that these objects satisfy the following five properties:
\begin{enumerate}[a)]
    \item $m_{i-1}< \ell^{i-1}$ \label{item_a}
    \item $\overline{\mathcal{Q}}^{i-1}\cup S^{i-1}$ is a separator between $N[H]$ and $\overline{\mathcal{Q}}^{i-2}$. \label{item_b}
    \item $N^{i-1}\cup N[S^{i-1}]$ is a separator between $N[H]$ and $N[P]$. \label{item_c}
    \item $\alpha(N^{i-1})\geq \eta(\ell,d)-4(d-1)\sum_{j\in [i-1]}\ell^j>0$.
    In particular, $N^{i-1}\neq \emptyset$. \label{item_d}
    \item $\overline{\mathcal{Q}}^{i-1}\cap N[H]=\overline{\mathcal{Q}}^{i-1}\cap \overline{\mathcal{Q}}^{i-2}=\emptyset$. \label{item_e}
\end{enumerate}

If there are at least $\ell^i>\ell\cdot m_{i-1}$ non-adjacent paths between $N^{i-1}$ and $\overline{\mathcal{Q}}^{i-1}$, then there is some $j\in [m_{i-1}]$ such that there are at least $\ell$ non-adjacent paths between $N^{i-1}$ and $\overline{\mathcal{Q}}_j^{i-1}$.
In this case, we may choose those paths as rung paths and choose $Q_j^{i-1}$ as a rail path to find the desired induced subgraph.
Thus suppose there are at most $\ell^{i}-1$ such paths.
As before, we choose such paths between $N^{i-1}$ and $\overline{\mathcal{Q}}^{i-1}$ one by one, selecting a shortest possible path that is non-adjacent to any of the previously chosen paths, until all options are depleted.
Let $Q_1^{i},\cdots,Q_{m_i}^i$ be the chosen paths.
Note that since $N^{i-1}\neq \emptyset$, we have $m_i\geq 1$.
Then we define $\mathcal{Q}^i,S^i,\overline{\mathcal{Q}}_{j}^{i},\overline{\mathcal{Q}}^{i},G^i,$ and $N^i$ analogously to before.
Now we have to show that properties \ref{item_a} - \ref{item_e} hold for $i$.

\paragraph{\ref{item_a}}
We have $m_i<\ell^i$ from the assumption above.

\paragraph{\ref{item_b}}
From the definition of $m_i$, it follows directly that $\overline{\mathcal{Q}}^i\cup S^i$ is a separator between $N[H]$ and $N[\overline{\mathcal{Q}}^{i-2}]$.

\paragraph{\ref{item_c}}
The proof is carried out analogously to the proof of \Cref{claimc}; by using property \ref{item_c} from prior steps.
Suppose that $N^i\cup N[S^i]$ is not a separator between $N[H]$ and $N[P]$.
Then there exists a path $\widetilde{Q}^0$ from $N[H]$ to $N[P]$ avoiding $N^{i}\cup N[S^{i}]$.
We inductively find a subpath of $\widetilde{Q}^0$ between $N[H]$ and $\overline{\mathcal{Q}}^0$, whose internal vertices are in $G^i$.
As $\overline{\mathcal{Q}}^1\cup S^1$ is a separator between $N[H]$ and $N[P]$, we have $V(\widetilde{Q}^0)\cap \overline{\mathcal{Q}}^1\neq \emptyset$.
Thus we can find a subpath of $\widetilde{Q}^0$ between $N[H]$ and $\overline{\mathcal{Q}}^1$ that does not meet with $\overline{\mathcal{Q}}^1$ again, say $\widetilde{Q}^1$.
Note that the internal vertices of $\widetilde{Q}^1$ are contained in $G^1$.
We can repeat this process (considering $N[P]=\overline{\mathcal{Q}}^0$) to find a path $\widetilde{Q}^i$ between $N[H]$ and $\overline{\mathcal{Q}}^i$, whose internal vertices are in $G^i$.
Then one endpoint of $\widetilde{Q}^i$ is in $N[H]$, and by the definition of $N^i$, that endpoint is in $N^i$.
But this contradicts our assumption that $\widetilde{Q}^0$ avoids $N^i$.
Therefore, $N^i\cup N[S^i]$ is a separator between $N[H]$ and $N[P]$.

\paragraph{\ref{item_d}} As $N^i\cup N[S^i]$ is a separator between $H$ and $P$, we have $\alpha(N^{i}\cup N[S^i])\geq \eta(\ell,d)$.
Hence
\begin{align*}
    \alpha(N^{i})&\geq \eta(\ell,d)-\alpha(N[S^{i}])\\
    &\geq \eta(\ell,d)-(d-1)|S^{i}|\\
    &\geq \eta(\ell,d)-4(d-1)\sum_{j\in [i]}m_{j}\\
    &> \eta(\ell,d)-4(d-1)\sum_{j\in [i]}\ell^j\\
    &>0.
\end{align*}

\paragraph{\ref{item_e}}
This can be proved along the same arguments as \Cref{claime}.
\medskip

Hence, we constructed eight objects satisfying properties \ref{item_a}-\ref{item_e}.
This completes the $i$-th step.

\paragraph{After $\ell+1$ steps.}
Suppose that the above construction process did not terminate until reaching the $(\ell+1)$-th step.
Note that as $\eta(\ell,d)=8(d-1)\ell^{\ell+1}\geq \sum_{j\in [\ell+1]} 4(d-1)\ell^j $, our choices of numbers do actually allow us to repeat the above construction process for $\ell+1$ times.
Now for each $i\in [\ell+1]$, we find the $i$-th rung path as a subpath of $Q_{j}^i$ for some $j$, and iteratively construct the second rail path by attaching the remaining parts of each $Q_{j}^i$ as follows.

Choose $\ell+1$ paths $R^1,\cdots,R^{\ell+1}$ as follows:
First, let $R^{\ell+1}\coloneqq Q_1^{\ell+1}$.
After we choose $Q_{t_{i+1}}^{i+1}$ as $R^{i+1}$, choose $R^i$ to be $Q_{t_i}^i$ such that one endpoint of $R^{i+1}$ is in the neighbor of $Q_{t_i}^i$, i.e. $w_{t_{i+1}}^{i+1}\in \overline{\mathcal{Q}}_{t_i}^i$.
Let $\phi_1^i\coloneqq v_{t_i}^i$ be an endpoint of $R^i$ in $N[H]$, and let $w^i\coloneqq w_{t_i}^i$ be the other endpoint.
Note that for each $i\in [\ell]$, $R^i$ has length at least $4$, since $w^{i}$ is adjacent to a vertex in $R^i$ that is not an endpoint or second endpoint of $R^i$.

\begin{figure}[H]
    \centering
    \begin{tikzpicture}[scale=1.7]
    \pgfdeclarelayer{background}
    \pgfdeclarelayer{foreground}
    \pgfsetlayers{background,main,foreground}
    \begin{pgfonlayer}{background}
        \pgftext{\includegraphics[width=6cm]{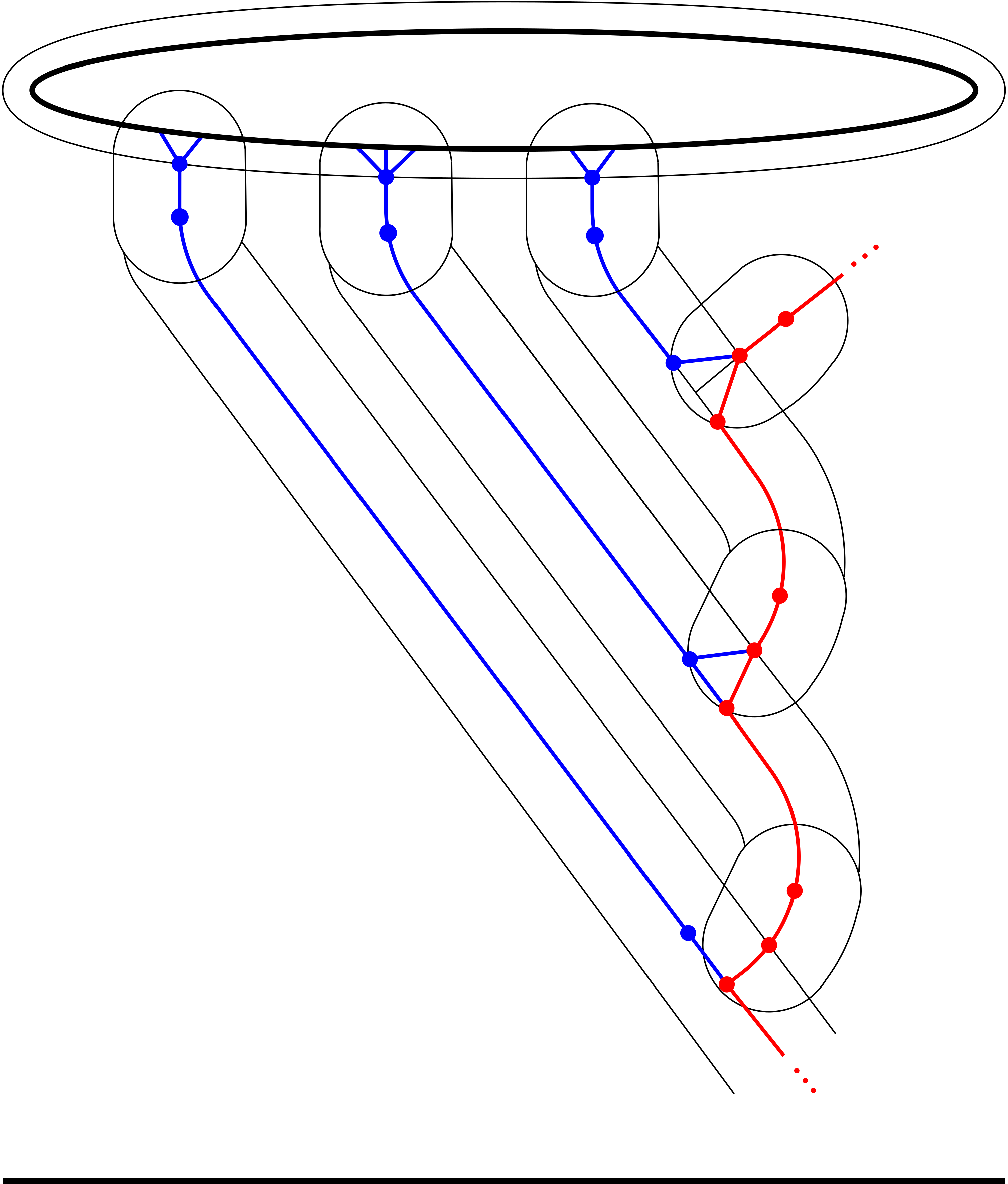}} at (C.center);
    \end{pgfonlayer}{background}
    \begin{pgfonlayer}{main}
        \node[]() at (-2.8,-3.3) {\large $\bm{P}$ };
        \node[]() at (-2.5,2.99) {\large $\bm{H}$ };
        \node[]() at (-2.85,3.45) {\large $N[H]$ };
        \node[]() at (-1.74,2.55) {$\color{blue}{\phi_1^i}$ };
        \node[]() at (-0.42,2.485) {$\color{blue}{\phi_1^{i+1}}$ };
        \node[]() at (0.82,2.485) {$\color{blue}{\phi_1^{i+2}}$ };
        \node[]() at (0.93,-2.09) {$\color{blue}{\phi_2^i}$ };
        \node[]() at (0.84,-0.48) {$\color{blue}{\phi_2^{i+1}}$ };
        \node[]() at (0.74,1.28) {$\color{blue}{\phi_2^{i+2}}$ };
        \node[]() at (1.55,-2.34) {$\color{red}{\widecheck{w}^i}$ };
        \node[]() at (1.87,-2.09) {$\color{red}{w^{i+1}}$ };
        \node[]() at (1.64,-0.64) {$\color{red}{\widecheck{w}^{i+1}}$ };
        \node[]() at (1.8,-0.33) {$\color{red}{w^{i+2}}$ };
        \node[]() at (1.58,1.07) {$\color{red}{\widecheck{w}^{i+2}}$ };
        \node[]() at (1.71,1.41) {$\color{red}{w^{i+3}}$ };
        \node[]() at (-1.33,0.88) {\large $\color{blue}{\bm{\Phi^i}}$ };
        \node[]() at (-0.43,1.22) {\large $\color{blue}{\bm{\Phi^{i+1}}}$ };
        \node[]() at (0.49,1.62) {\large $\color{blue}{\bm{\Phi^{i+2}}}$ };
        \node[]() at (2.4,2.2) {\large $\color{red}{\bm{P'}}$ };
    \end{pgfonlayer}{main}
    \begin{pgfonlayer}{foreground}
    \end{pgfonlayer}{foreground}
\end{tikzpicture}
    \caption{After $\ell+1$ steps, we can find a rail path $P'$ (red) and rung paths $\Phi^i$'s (blue).}
    \label{fig:thelaststep}
\end{figure}

For $i\in [\ell]$, among the vertices in $N(w^{i+1})\cap R^i$, let $\widecheck{w}^i$ be the vertex farthest from $H$ (in $R^i$).
If $|N(w^{i+1})\cap R^i|=1$, let $\phi^i_2$ be the neighbor of $\widecheck{w}^i$ in the path $R^i$ closer to $H$ (in $R^i$).
Otherwise, let $\phi^i_2$ be the vertex in $N(w^{i+1})\cap R^i$ that is nearest to $H$ (in $R^i$).
Let $P'\coloneqq \widecheck{w}^1w^2R^2\widecheck{w}^2w^3R^3\widecheck{w}^3\cdots w^{\ell}R^{\ell}\widecheck{w}^{\ell}w^{\ell+1}$.
Since $w^{i}\in S^{i}$ and $\widecheck{w}^i\in \overline {Q}_{t_i}^{i}$, we have $w^i\neq \widecheck{w}^i$.
In particular, this implies that $P'$ is an induced path.
Let $\Phi^i\coloneqq \phi^i_1R^i\phi^i_2$ for $i\in [\ell]$.
Then we find a $k$-$H$-rope ladder as an induced subgraph, whose rung paths are $\Phi^1,\Phi^2,\cdots,\Phi^\ell$ and whose rail path is $P'$, which completes the proof.
\end{proof}

\section{Substructures in ladder-like graphs}\label{sec:Substructures}

In this section, we prove that our techniques allow us to deduce or generalize several known results concerning the tree-independence number of $K_{1,d}$-free.
This is achieved mostly by analyzing substructures of $k$-$H$-rope ladders.

\subsection{Excluding and packing non-adjacent induced cycles}
A first, particularly important substructure of -- specifically -- $k$-skinny ladders are large families of pairwise non-adjacent long induced cycles.
Recently, Ahn, Gollin, Huynh, and Kwon \cite{ahn2024coarseerdhosposatheorem} proved a coarse version of the celebrated Erd\H{o}s-P{\'o}sa theorem as follows.

\begin{theorem}[\cite{ahn2024coarseerdhosposatheorem}]\label{thm:coarseerdosposa}
    There exists a function $f(k)=\mathcal{O}(k\log k)$ such that for every positive integer $k$, every graph $G$ contains either the disjoint union of $k$ cycles as an induced subgraph, or a set $X$ of vertices with $|X|\leq f(k)$ such that $G-N[X]$ is a forest.
\end{theorem}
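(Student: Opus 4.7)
The plan is to prove this by induction on $k$, using the classical Erd\H{o}s-P\'osa theorem as a guiding template and iteratively peeling off \emph{short} induced cycles whose closed neighborhoods build up the set $X$. In the base case $k=1$, either $G$ is a forest (take $X = \emptyset$) or $G$ contains a cycle (a trivial singleton family of non-adjacent cycles).

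For the inductive step, I would first apply standard reductions: repeatedly delete vertices of degree at most $1$ (they lie on no cycle) and suppress vertices of degree exactly $2$ (contracting each degree-$2$ vertex into an edge). These reductions preserve the cycle structure of $G$ and can be reversed in the output, so we may assume $\delta(G) \ge 3$. In such a graph on $n$ vertices, the Moore bound gives an induced cycle of length at most $2\log_2 n + O(1)$. Let $C$ be a shortest induced cycle, and apply induction to $G' \coloneqq G - N[C]$ with parameter $k-1$: either $G'$ contains $k-1$ pairwise non-adjacent induced cycles, which together with $C$ yield $k$ such cycles in $G$ (since all of them lie in $G - N[C]$ and are therefore non-adjacent to $C$), or there is $X' \subseteq V(G')$ of size at most $f(k-1)$ such that $G' - N_{G'}[X']$ is a forest, in which case $X \coloneqq X' \cup V(C)$ satisfies $|X| \le f(k-1) + |V(C)|$ and $G - N_G[X]$ is a forest.

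The naive recursion $f(k) \le f(k-1) + O(\log n)$ only yields the weaker bound $f(k) = O(k \log n)$ rather than $O(k \log k)$. To achieve the claimed bound I would split into two regimes. If the shortest induced cycle of (the reduced) $G$ has length $O(\log k)$, the recursion charges only $O(\log k)$ per step, giving $f(k) = O(k \log k)$ after $k$ iterations. If instead the shortest induced cycle has length $\omega(\log k)$, then $G$ has girth $\omega(\log k)$ and minimum degree $\ge 3$, so BFS from any vertex exhibits tree-like expansion with branching factor $\ge 2$ for $\Omega(\log k)$ levels. In this high-girth regime, I would directly construct $k$ pairwise non-adjacent induced cycles by locating cycles within disjoint subtrees of this expansion, using the large girth to guarantee that such cycles cannot be adjacent (an edge between disjoint subtrees would create a short cycle, contradicting girth).

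The main obstacle will be the high-girth case. Making the expansion argument rigorous requires careful accounting: one must locate genuine induced cycles inside sufficiently narrow BFS branches, and then verify that vertex-disjoint cycles living in well-separated branches are non-adjacent because any chord or connecting edge would witness a cycle shorter than the girth. An alternative, possibly cleaner route, is to first apply the classical Erd\H{o}s-P\'osa theorem to extract a very large collection of vertex-disjoint cycles, then pass to the ``adjacency hypergraph'' of these cycles and argue that in the high-girth setting this hypergraph has small maximum degree (each cycle can be adjacent to only few others), so a greedy or probabilistic selection yields $k$ non-adjacent members. Reconciling the two cases into a single clean $O(k \log k)$ bound via an amortized/telescoping analysis is, I expect, the technical heart of the proof.
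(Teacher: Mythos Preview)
The paper does not prove this statement: \Cref{thm:coarseerdosposa} is quoted verbatim from Ahn, Gollin, Huynh, and Kwon \cite{ahn2024coarseerdhosposatheorem} and used only as background motivation in \Cref{sec:Substructures}. There is therefore no proof in the paper to compare your proposal against.

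On the merits of your sketch itself: the Simonovits-style induction (reduce to minimum degree $\geq 3$, peel off a shortest cycle together with its closed neighborhood, recurse) is the natural template, and you correctly identify that the naive recursion only yields $O(k\log n)$. However, two steps need more care. First, suppressing degree-$2$ vertices is not obviously compatible with the conclusion ``$G-N[X]$ is a forest'': closed neighborhoods are not preserved under suppression, so when you lift $X$ back to the original graph you must argue explicitly that re-inserting the subdivided paths cannot create a cycle disjoint from $N[X]$. Second, your high-girth branch is where the real work lies, and the argument as written is not a proof: you need a concrete mechanism to extract $k$ pairwise non-adjacent cycles from a graph of minimum degree $\geq 3$ and girth $\Omega(\log k)$, and ``cycles in disjoint BFS subtrees cannot be adjacent'' is false in general (an edge between two subtrees need not close a short cycle if the subtrees are deep). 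The actual proof in \cite{ahn2024coarseerdhosposatheorem} handles the amortization differently; if you want to reconstruct it, the cleanest route is to control the length of the shortest cycle in terms of the \emph{number of remaining disjoint cycles} rather than $n$, via a frame/maximal-packing argument \`a la Simonovits.
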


When restricted to $K_{1,d}$-free graphs, \cref{thm:coarseerdosposa} implies that every graph either contains the disjoint union of $k$ cycles as an induced subgraph, or there exists a set of small independence number whose deletion removes all cycles in the graph.
This directly implies that $K_{1,d}$-free graphs without induced subgraphs isomorphic to the disjoint union of $k$ cycles have bounded $\alpha$-treewidth.

Indeed, this set of authors proved a version of \Cref{thm:coarseerdosposa} regarding a disjoint union of \textsl{long} cycles.
Note that in this version the cycles do not need to be induced cycles, they must be pairwise non-adjacent however.

\begin{theorem}[\cite{ahn2025coarseerdhosposatheorem}]\label{thm:longcoarseerdosposa}
    There exists a function $f(k,\ell)\in \mathcal{O}(\ell k \log k)$ such that for all positive integers $k$ and $\ell\geq 3$, every graph $G$ contains either $k$ pairwise non-adjacent cycles each of length at least $\ell$, or set $X$ of vertices with $|X|\leq f(k,\ell)$ such that, all cycles of $G-N[X]$ have length at most $\ell-1$.
\end{theorem}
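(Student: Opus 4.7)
The plan is to derive this result from the classical Erd\H{o}s--P\'osa theorem for long cycles, which guarantees that every graph either contains $K$ pairwise vertex-disjoint cycles of length at least $\ell$ or admits a hitting set of size $\mathcal{O}(K\ell \log K)$ meeting every cycle of length at least $\ell$, and then to \emph{upgrade} the vertex-disjoint conclusion to a pairwise non-adjacent one via a sparsification step. Note that the hitting-set case of the classical theorem already yields the hitting-set case of the coarse variant: if $X$ meets every long cycle, then $X \subseteq N[X]$ does as well, so $G - N[X]$ contains no cycle of length at least $\ell$.

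Concretely, I would invoke the classical theorem with parameter $K = K(k,\ell)$ to be chosen appropriately. In the cycle-family case, let $C_1, \ldots, C_K$ be the vertex-disjoint long cycles, and form an auxiliary graph $H$ on $\{C_1, \ldots, C_K\}$ with $C_iC_j \in E(H)$ iff $C_i$ and $C_j$ are adjacent in $G$. If $H$ has an independent set of size $k$, the corresponding $C_i$'s are the desired pairwise non-adjacent long cycles. Otherwise, the absence of a large independent set in $H$ should be leveraged structurally: I would iterate a greedy hitter argument, at each round either producing a fresh long cycle non-adjacent to those already collected, or adding a single vertex $v \in V(G)$ to the hitting set whose closed neighborhood intersects a constant fraction of the remaining $C_i$'s. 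Telescoping over $\mathcal{O}(\log K)$ rounds would yield a hitting set of the required size, provided $K$ is chosen polynomial in $k$ so that $\log K = \mathcal{O}(\log k)$.

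The main obstacle is the density-to-hitter conversion. The mere absence of an independent set of size $k$ in $H$ does not by itself produce a single vertex of $G$ hitting many of the $C_i$'s, because an adjacency between two cycles in $G$ may be realized by a distinct edge with no shared endpoint across pairs. Overcoming this requires a Ramsey- or sunflower-type extraction showing that, in the ``no large independent set'' regime, one can always find a vertex whose neighborhood meets $\Omega(K/k)$ of the cycles, or alternatively a small vertex set that dominates many cycles simultaneously. Calibrating $K$ so that the geometric decay in cycle count balances against the $\mathcal{O}(\ell)$ cost per round, and carefully handling the boundary between the ``independent set'' and ``dense $H$'' regimes, is where I expect the bulk of the technical work to live; the final $\mathcal{O}(k\ell\log k)$ bound should then emerge from combining the classical $\mathcal{O}(K\ell \log K)$ contribution with the $\mathcal{O}(\ell \log K)$ contribution from the sparsification phase.
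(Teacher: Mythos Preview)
This theorem is not proved in the paper at all. It is quoted, with attribution to \cite{ahn2025coarseerdhosposatheorem}, purely as background to motivate and contextualize the paper's own \Cref{thm:erdosposaforskinnyladder}; no argument for it appears anywhere in the text. There is therefore nothing to compare your proposal against within this paper.

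On the substance of your sketch: the first half is sound (the classical long-cycle Erd\H{o}s--P\'osa theorem does give the hitting-set alternative directly), but the ``density-to-hitter conversion'' you flag as the main obstacle is a genuine gap, not just a technicality. You yourself observe that small independence number in the auxiliary graph $H$ does not force any single vertex of $G$ to dominate many of the $C_i$: for instance, with $K\le \ell+1$ one can place a private edge between every pair of cycles using pairwise distinct endpoints on each cycle, so $H$ is complete while every vertex of $G$ meets at most two of the $C_i$ via its closed neighborhood. No Ramsey or sunflower argument extracts a high-degree hitter here, because there is none. The actual proof in \cite{ahn2025coarseerdhosposatheorem} does not proceed by black-boxing the classical theorem and then sparsifying; it adapts the frame-based argument for the classical Erd\H{o}s--P\'osa property directly to the coarse setting, building the non-adjacency in from the start rather than trying to recover it afterwards. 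If you want to reconstruct the result, that is the direction to look.
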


Notice that, due to the Grid Theorem of Robertson and Seymour, \cref{thm:longcoarseerdosposa} still implies that any $K_{1,d}$-free graph that does not contain $k$ pairwise non-adjacent cycles each of length at least $\ell$ must have bounded tree-independence number.

As a consequence of \Cref{thm:themainresult}, we obtain a similar Erd\H{o}s-P\'{o}sa-type theorem for long induced cycles in $K_{1,d}$-free graphs.
In fact, we prove a stronger theorem as follows:

Let $H$ be a graph.
We say that $H$ has the \emph{independence Erd\H{o}s-P{\'o}sa property} in $K_{1,d}$-free graphs if there exists a function $f\colon \mathbb{N}^2\to\mathbb{N}$ such that for every $K_{1,d}$-free graph $G$ and every positive integer $k$, either $G$ contains $k$ pairwise non-adjacent induced subgraphs, each containing $H$ as an induced minor, or there exists a set $X\subseteq V(G)$ such that $\alpha(X)\leq f(d,k)$ and $G-X$ does not contain $H$ as an induced minor.

We prove that for every $k$, every connected induced subgraph of the $k$-skinny ladder has the independence Erd\H{o}s-P\'{o}sa property in $K_{1,d}$-free graphs.
\smallskip

Let $G$ be a graph.
Notice that for every set $X\subseteq V(G)$ and every independent set $X'\subseteq X$ with $|X'|=\alpha(X)$ we have $X\subseteq N[X']$.
Moreover, in $K_{1,d}$-free graphs, for any vertex set $X$, $\alpha(N[X])\leq (d-1)|X|$.
Thus in $K_{1,d}$-free graphs, removing the closed neighbor of a vertex set of bounded size is equivalent to removing a vertex set of bounded independence number.
Therefore, \Cref{thm:erdosposaforskinnyladder} as stated below indeed generalizes \Cref{thm:coarseerdosposa} in $K_{1,d}$-free graph.

\begin{theorem}\label{thm:erdosposaforskinnyladder}
    Let $\ell$ be a positive integer and $H$ be a connected induced subgraph of the $\ell$-skinny ladder.
    Then $H$ has the independence Erd\H{o}s-P{\'o}sa property in $K_{1,d}$-free graphs.
\end{theorem}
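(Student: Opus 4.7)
The plan is to combine \Cref{thm:themainresult} with a ``deepest good subtree'' iteration on a tree-decomposition of small $\alpha$-width. Fix $K := k(\ell+1) - 1$ and $\tau_0 := \tau(K, d)$ from \Cref{thm:themainresult}, and aim for $f(d, k) := (k-1)\tau_0$.

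If $\atw(G) \geq \tau_0$, then \Cref{thm:themainresult} provides the $K$-skinny ladder as induced minor of $G$, with some branch sets $\{B_v\}$. The $K$-skinny ladder itself contains $k$ pairwise non-adjacent induced copies of the $\ell$-skinny ladder, obtained as the $k$ consecutive length-$\ell$ blocks of rungs separated by a single-rung gap each. For each such block $S_i$, let $M_i := \bigcup_{v \in S_i} B_v$. The $M_i$ are pairwise non-adjacent in $G$ because the induced minor relation preserves non-adjacency of vertex sets, and each $G[M_i]$ contains the $\ell$-skinny ladder, and hence $H$, as induced minor by \Cref{lem:inducedminorofinducedminor}. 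The first alternative of the theorem holds.

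If $\atw(G) < \tau_0$, fix a rooted tree-decomposition $(T, \beta)$ of $G$ with $\alpha(\beta(t)) < \tau_0$ for every $t$. For a node $t$ in the currently ``live'' subtree, let $V^{\text{int}}_t$ denote the union of bags in the live subtree rooted at $t$, and let $\operatorname{int}(t) := \beta(t) \cap \beta(\operatorname{parent}(t))$ be its interface with its parent in the live tree (empty if $t$ is the current root). Starting from $X_0 := \emptyset$ with the whole of $T$ live, iterate as follows: at step $i$, if $G$ restricted to the live vertices (those in live bags and not in $X_{i-1}$) still contains $H$ as an induced minor, pick the deepest live node $t_i$ for which the induced subgraph on $V^{\text{int}}_{t_i} \setminus \operatorname{int}(t_i)$, after deleting $X_{i-1}$, still contains $H$ as induced minor; fix such a witnessing copy $M_i$; set $X_i := X_{i-1} \cup \beta(t_i)$; and delete the whole subtree rooted at $t_i$ from the live tree. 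A deepest such $t_i$ always exists whenever $H$ survives, since the current root satisfies the condition (its interface is empty). Deepness of $t_i$ together with the connectivity of $H$ then forces $V^{\text{int}}_{t_i} \setminus \beta(t_i)$ to host no induced minor of $H$: any such hypothetical copy would, by connectivity of $H$, fit inside a single child subtree of $t_i$ with its interface removed, contradicting the choice of $t_i$. Therefore, once the loop halts, $G - X$ has no induced minor of $H$.

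The crucial property of this ``interface-avoiding'' variant is that the copies $M_1, M_2, \ldots$ produced by the iteration are pairwise non-adjacent in $G$. For $i < j$, $M_j$ lies outside the subtree of $t_i$ and avoids $\beta(t_i) \subseteq X_{j-1}$, while $M_i \subseteq V^{\text{int}}_{t_i} \setminus \operatorname{int}(t_i)$ lies strictly inside that subtree. By the standard separator property of tree-decompositions, any $G$-edge crossing the boundary of $t_i$'s subtree has both endpoints in $\operatorname{int}(t_i)$; since $M_i$ avoids $\operatorname{int}(t_i)$, no such edge can connect $M_i$ to $M_j$, so the two sets are non-adjacent. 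Consequently, if the loop runs for at least $k$ iterations, $G[M_1], \ldots, G[M_k]$ yields $k$ pairwise non-adjacent induced subgraphs each containing $H$ as induced minor (first alternative); otherwise the loop halts within $k - 1$ iterations, giving $\alpha(X) \leq (k-1)\tau_0 = f(d,k)$ (second alternative). The main technical subtlety, and also the main obstacle, is precisely this interface-avoiding refinement of the naive ``deepest $t$ with $H$ in $G_t$'' rule: only with the stronger rule for picking $t_i$ do both the termination/hitting analysis and the non-adjacency analysis go through simultaneously, and both parts crucially rely on the assumption that $H$ is connected.
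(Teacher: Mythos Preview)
Your proof is correct and follows essentially the same approach as the paper's: both split on whether $\atw(G)$ is large (invoking \Cref{thm:themainresult} to extract $k$ non-adjacent copies of the $\ell$-skinny ladder from a single large one) and otherwise run a ``deepest subtree containing $H$'' peeling argument on a tree-decomposition of small $\alpha$-width. The only differences are cosmetic: the paper phrases the peeling as an induction on $i$ applied to the induced subgraph $G_{s_1s_2}$, whereas you unroll this into an explicit iteration maintaining a live subtree and the set $X$; your interface-avoiding rule is exactly what the paper achieves implicitly via the definition of $G_{s_2s_1}$, and your constants ($K=k(\ell+1)-1$, $f(d,k)=(k-1)\tau_0$) are marginally tighter but of the same order.
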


\begin{proof}
    Let $\tau$ be the function given in \Cref{thm:themainresult}.

    If $\atw(G)\geq \tau(k(\ell+1),d)$, then by $\Cref{thm:themainresult}$, $G$ contains the $k(\ell+1)$-skinny ladder $L$ as an induced minor.
    Notice that $L$ contains $k$ pairwise non-adjacent copies of the $\ell$-skinny ladder as induced subgraphs.
    Hence, in this case, $G$ has $k$ pairwise non-adjacent induced subgraphs, each containing $H$ as an induced minor.
    
    Therefore, we may assume that $\atw(G)\leq \tau(k(\ell+1),d)$.
    We now claim that for each $i\geq 0$ and graph every $G$ with $\atw(G)\leq \tau(k(\ell+1),d)$, either $G$ contains $i$ pairwise non-adjacent induced subgraphs, each with $H$ as an induced minor, or there is a set $X_i$ of vertices with $\alpha(X_i)\leq i\tau(k(\ell+1),d)$ such that, $G-X_i$ does not contain $H$ as an induced minor.

    We proceed by induction on $i$.
    If $i=0$, the above statement is trivially true.
    Hence we may assume $i\geq 1$ and that above statement is true for all $j\in [0,i-1]$.
    Let $(T,\beta)$ be a tree decomposition of $G$ whose independence number at most $\tau(k(\ell+1),d)$.
    Choose an arbitrary node $r\in V(T)$ as an root.
    For each edge $t_1t_2\in E(T)$, where $t_1$ is the parent (with respect to $r$) of $t_2$, let $T_{t_1t_2}$ be the component of $T-t_1t_2$ containing $t_1$ (thus also containing $r$), and let $T_{t_2t_1}$ be other component.
    Let $G_{t_1t_2}=G-\bigcup_{t\in V(T_{t_1t_2})}\beta(t)$ and $G_2=G-\bigcup_{t\in V(T_{t_2t_1})}\beta(t)$.
    Notice that $G_{t_1t_2}$ and $G_{t_2t_1}$ are vertex-disjoint and no vertex of $G_{t_1t_2}$ can be adjacent to a vertex of $G_{t_2t_1}$ in $G$.

    Now, let $s_1s_2$ be an edge farthest from $r$ where $s_1$ is the parent of $s_2$, such that $G_{s_2s_1}$ contains $H$ as an induced minor.
    Notice that if such an edge does not exist, either $G$ doe snot contain $H$ as an induced minor, or $G-\beta(r)$ does not contain $H$ as an induced minor.
    Since $\alpha(\beta(r))\leq \tau(k(\ell+1),d)$ this would mean we are done.
    
    Hence, we may assume that such an edge $s_1s_2$ exists and observe that $G_{s_2s_1}-\beta(s_2)$ is the disjoint union of graphs $G_{s's_2}$ where $s'\in N(s_2)\setminus \{s_1\}$, so $G_{s_2s_1}-\beta(s-2)$ does not contain $H$ as an induced minor by choice of $s_1s_2$.
    
    If $G_{s_1s_2}$ contains $i-1$ disjoint copies of $H$ as an induced minor, then in total, $G$ contains $i$ disjoint copies of $H$ as an induced minor and we are done.
    Otherwise, by our induction hypothesis, $G_{s_1s_2}$ must contain a set $X_{i-1}$ such that $\alpha(X_{i-1})\leq (i-1)\tau(k(\ell+1,d))$ and $G_{s_1s_2}-X_{i-1}$ does not contain $H$ as an induced minor.
    Let $X_i=X_{i-1}\cup \beta(s_2)$.
    Then $\alpha(X_i)\leq \alpha(X_{i-1})+\alpha(\beta(s_2))\leq i\tau(k(\ell+1),d)$, and $G-X_i$ is the disjoint union of $G_{s_2s_1}-\beta(s_2)$ and $G_{s_1s_2}-X_{i-1}$.
    Hence, $G-X_i$ does not contain $H$ as an induced minor.
    This proves our claim, and by setting $i=k$ our theorem follows.
\end{proof}

\subsection{Excluding tripods and their line graphs}
Another type of substructure one can find in a $k$-rope ladder is the one of tripods and their line graphs.
A \emph{tripod} is a tree with at most three leaves.
Let $\mathcal{S}$ be the family of graphs whose components are tripods, and let $L(\mathcal{S})$ be the family of all line graphs of graphs from $\mathcal{S}$.

\begin{theorem}[\cite{dallard2024treewidthversuscliquenumber4}]\label{thm:TripodAlphaTW}
    For any positive integer $d$ and any two graphs $S\in \mathcal{S}$ and $T\in L(\mathcal{S})$, the class of $\{K_{1,d},S,T\}$-free graphs has bounded tree-independence number.
\end{theorem}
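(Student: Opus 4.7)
The plan is to deduce \Cref{thm:TripodAlphaTW} from \Cref{thm:themainresult} by analyzing substructures of the rope ladder produced by its proof. Fix $S\in\mathcal{S}$ and $T\in L(\mathcal{S})$, set $n=\max\{|V(S)|,|V(T)|\}$, and choose $k=k(n,d)$ sufficiently large. I would then argue that any $K_{1,d}$-free graph $G$ with $\atw(G)\geq\tau(k,d)$ (where $\tau$ is the function of \Cref{thm:themainresult}) must contain $S$ or $T$ as an induced subgraph, contradicting $\{K_{1,d},S,T\}$-freeness. The proof of \Cref{thm:themainresult} (via \Cref{thm:somethingpath} and \Cref{lem:cleanshuffledropeladder}) in fact produces a $k$-rope ladder $L$ as an induced subgraph of $G$, with rail paths $P_1,P_2$ and rung paths $\Phi^1,\ldots,\Phi^k$ of length at least four.

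For each rung $\Phi^i$, set $A^i=N(\phi_1^i)\cap V(P_1)$; by \Cref{lem:nbdonpath}, $|A^i|\leq 2(d-1)$. I would classify $\Phi^i$ as Type~I if no two vertices of $A^i$ are consecutive on $P_1$, and Type~II otherwise. By pigeonhole at least $k/2$ rungs share one type. In the Type~I case, each spider component of $S$ (a subdivision of $K_{1,3}$) is embedded using a single Type~I rung $\Phi^i$: either $\phi_1^i$ (when $|A^i|\geq 2$) or the unique vertex $v\in A^i$ (when $|A^i|=1$) serves as the spider's center, with three arms extending along $P_1$ in both directions and down into $\Phi^i$. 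The leftmost and rightmost elements of $A^i$ are non-adjacent by the Type~I assumption, no other vertex of $A^i$ lies in the arm interiors, and no rung vertex other than $\phi_1^i$ is adjacent to $P_1$, so the resulting subgraph is an induced $K_{1,3}$-subdivision. Path components of $S$ are embedded as subpaths of the rails or individual rungs. In the Type~II case, each relevant rung yields a triangle $\{\phi_1^i,v,w\}$ with $v,w\in A^i$ consecutive on $P_1$; extending three paths from this triangle (into $\Phi^i$ and in the two directions along $P_1$) produces the non-trivial connected component of any element of $L(\mathcal{S})$, so $T$ is embedded analogously.

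By selecting same-type rungs spaced widely along $P_1$, the embeddings of different components remain disjoint and pairwise non-adjacent, and since $k$ is taken large enough in terms of $n$ and $d$ there is room for all of them. The main obstacle will be the Type~II case when $A^i$ contains a run of three or more consecutive vertices on $P_1$: whichever consecutive pair is chosen for the triangle, a further neighbor of $\phi_1^i$ in $A^i$ may then lie on the extended rail arm and introduce an unwanted chord. I plan to handle this by choosing the consecutive pair at the boundary of a maximal run of $A^i$ (so that at least one arm direction is chord-free) and rerouting the remaining arm through the rung endpoint $\phi_2^i$ and a subpath of $P_2$, using the analogous classification of $B^i=N(\phi_2^i)\cap V(P_2)$. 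An iterated pigeonhole over both sides of the selected rungs then guarantees enough uniformly ``clean'' rungs to carry out the embeddings of all components of $S$ or $T$ simultaneously.
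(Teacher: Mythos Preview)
Your overall strategy matches the paper's: produce a rope ladder via \Cref{thm:twopaths}, \Cref{thm:somethingpath}, and \Cref{lem:cleanshuffledropeladder}, then analyse the junction between a rung endpoint $\phi_1^i$ and the rail $P_1$ and pigeonhole over types. The gap is in your two-way Type~I/Type~II split, specifically in the Type~II subcase where $|A^i|\geq 3$.

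Your proposed rescue there does not work. If, say, $A^i=\{v_1,v_2,v_5\}$ with $v_1v_2$ consecutive and $v_5$ further right, you take the triangle $\{v_1,v_2,\phi_1^i\}$; the left arm from $v_1$ is indeed chord-free, but the right arm from $v_2$ contains $v_5$, giving the unwanted edge $\phi_1^iv_5$. You propose to ``reroute the remaining arm through $\phi_2^i$ and a subpath of $P_2$'', but the offending arm is the one emanating from $v_2\in V(P_1)$, and in a rope ladder the only way from $v_2$ to $P_2$ is via $\phi_1^i$, which is already a triangle vertex. No rerouting through $P_2$ can repair the arm from $v_2$, and the double pigeonhole over $B^i$ does not help for the same reason.

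The paper avoids this entirely by a three-way classification of the junction: \textbf{type~1} ($|A^i|=1$), \textbf{type~2} ($|A^i|=2$ and the two vertices are adjacent on $P_1$), and \textbf{type~3} (everything else). The point you are missing is that in \textbf{type~3} the leftmost and rightmost elements of $A^i$ are non-adjacent, so one obtains $S_p$ (not $T_p$) with \emph{centre $\phi_1^i$}: the two rail arms are the subpaths of $P_1$ starting at the extreme neighbours and heading outwards, and the induced path through $\phi_1^i$ replaces the middle of $P_1$. Only the genuinely clean \textbf{type~2} situation is used to produce $T_p$. Thus your problematic Type~II rungs with $|A^i|\geq 3$ should be handled exactly like your Type~I rungs with $|A^i|\geq 2$, and no rerouting is needed.

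A smaller point: the claim that the rung paths have length at least four is not justified by the proof of \Cref{thm:somethingpath} (and fails in the early-termination branch). The third arm of the tripod cannot live entirely inside $\Phi^i$; as in the paper, it must continue through $\phi_2^i$ into $P_2$ to attain the required length.
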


Before we dive into our proof for \cref{thm:TripodAlphaTW}, we define three types of adjacency between one endpoint of a path and the vertices of another path.
See \cref{fig:Junction types} for an illustration.
This definition will help us to identify tripods and their line graphs more easily.
\begin{figure}
    \centering
    \begin{tikzpicture}[scale=2.5]
    \pgfdeclarelayer{background}
    \pgfdeclarelayer{foreground}
    \pgfsetlayers{background,main,foreground}
    \begin{pgfonlayer}{background}
        \pgftext{\includegraphics[width=6cm]{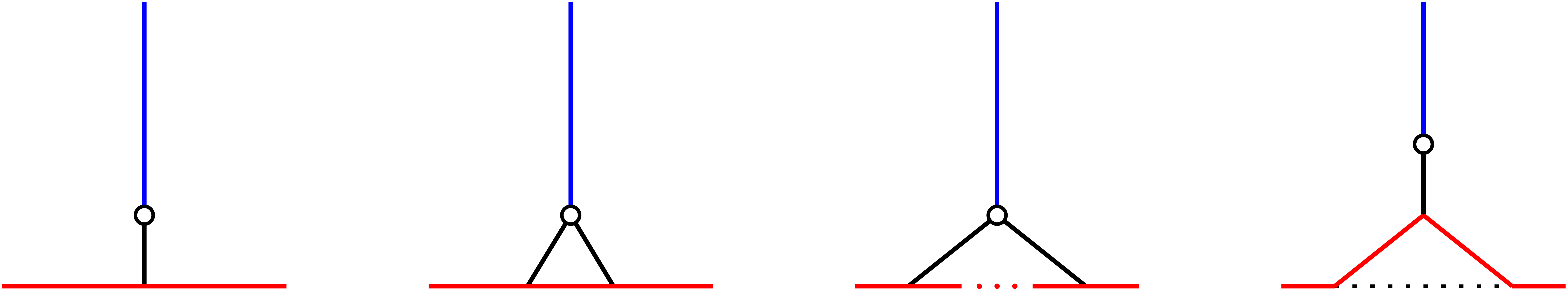}} at (C.center);
    \end{pgfonlayer}{background}
    \begin{pgfonlayer}{main}
        \node[]() at (-2.6,0.5) {$\color{blue}{P}$};
        \node[]() at (-0.98,0.5) {$\color{blue}{P}$};
        \node[]() at (0.64,0.5) {$\color{blue}{P}$};
        \node[]() at (2.1,0.5) {$\color{blue}{P-v}$};
        \node[]() at (-3.1,-0.48) {$\color{red}{Q}$};
        \node[]() at (-1.466,-0.48) {$\color{red}{Q}$};
        \node[]() at (0.158,-0.48) {$\color{red}{Q}$};
        \node[]() at (1.785,-0.48) {$\color{red}{Q'}$};
        \node[]() at (-2.6,-0.22) {$v$};
        \node[]() at (-0.97,-0.22) {$v$};
        \node[]() at (0.67,-0.22) {$v$};
        \node[]() at (2.32,-0.22) {$\color{red}{v}$};
        \node[]() at (2.16,-0.65) {$\color{red}{w_1}$};
        \node[]() at (2.82,-0.65) {$\color{red}{w_2}$};
    \end{pgfonlayer}{main}
    \begin{pgfonlayer}{foreground}
    \end{pgfonlayer}{foreground}
\end{tikzpicture}
    \caption{From the left: a junction connecting $P$ to $Q$ of \textbf{type 1}, \textbf{type 2}, \textbf{type 3}, and the cleaning of a junction of \textbf{type 3}.}
    \label{fig:Junction types}
\end{figure}

\begin{definition}
    Let $P,Q$ be disjoint induced paths such that $V(P)\cap N(Q)=\{v\}$ where $v$ is an endpoint of $Q$.
    We say that $N(P)\cap V(Q)$ is a \emph{junction connecting $P$ to $Q$}.
    Also, we say that:
    \begin{enumerate}
        \item If $|N(P)\cap V(Q)|=1$, we say that the junction is \emph{\textbf{type 1}}.
        \item If $|N(P)\cap V(Q)|=2$ and two vertices in $N(P)\cap V(Q)$ are adjacent, then we say the junction is \emph{\textbf{type 2}}.
        \item Otherwise, we say that the junction is \emph{\textbf{type 3}}.
    \end{enumerate}
\end{definition}

Suppose that the junction connecting $P$ to $Q$ is \textbf{type 3} where $P$ and $Q$ are induced paths and $V(P)\cap N(Q)=\{v\}$.
Let $w_1,w_2\in N(v)\cap P$ be the vertices such that $N(v)\cap V(Q)\subseteq V(w_1Qw_2)$.
Then $Q'\coloneqq Qw_1vw_2Q$ is an induced path since $w_1$ and $w_2$ are non-adjacent. 
If we also have $|V(P)|\geq 2$, then the junction connecting $P-v$ to $Q'$ is \textbf{type 1}.
We refer to this operation as \emph{cleaning}.

We now show how the existence of a $k$-rope ladder as an induced subgraph of a graph $G$ implies the presence of a large member of $\mathcal{S}\cup \mathcal{T}$ as an induced subgraph.

\begin{lemma}{\label{lem:tripodinladder}}
    For each $S\in \mathcal{S}$ and $T\in L(\mathcal{S})$, there exists an integer $k=k(S,T)$ such that every $k$-rope ladder graph contains one of $S$ or $T$ as an induced subgraph.
\end{lemma}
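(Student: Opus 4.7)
The plan is to apply pigeonhole on junction types and then do a short case analysis.

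Each rung $\Phi^i$ of the rope ladder has two junctions, one at each rail, and each is of type~$1$, $2$, or $3$, yielding $9$ possible type-pairs in total. Provided $k$ is large enough (by a factor of $9$), pigeonhole produces a sub-collection of rungs whose junctions all share one fixed type-pair. Any type-$3$ junction can be \emph{cleaned} as described just before the statement, by absorbing the rung-endpoint into the rail via the shortcut through the extremal attachment vertices on the rail; this converts the junction to type~$1$. Since cleaning alters only a small, localized segment of the rail, a further greedy/pigeonhole step allows us to clean the selected type-$3$ junctions simultaneously while retaining a constant fraction of the rungs. Hence we may assume that the remaining rungs all have a fixed junction type-pair $(t_1,t_2)\in\{1,2\}^2$.

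If $(t_1,t_2)=(1,1)$, consider the induced subgraph on $P_1$ together with all selected rungs. Because every rung attaches to $P_1$ via a single edge (type~$1$), different rungs are pairwise non-adjacent, and the only edges from a rung to $P_1$ are at its attachment endpoint, this induced subgraph is a caterpillar with spine $P_1$ and the rungs as legs. Since every subgraph of a tree is automatically induced, any specified $S\in\mathcal{S}$---a disjoint union of tripods---embeds as a subtree of this caterpillar, by assigning disjoint blocks of consecutive rungs (and the corresponding subpaths of $P_1$) to the different tripod components of $S$.

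If at least one of $t_1,t_2$ equals $2$---say $t_2=2$---then at each rung's $P_2$-attachment there is a triangle $\{\phi_2^i,p,p'\}$, where $p,p'$ are consecutive vertices of $P_2$. Combining this triangle with the two subpaths of $P_2$ extending from $p$ and $p'$ away from each other, together with the rung $\Phi^i$ as a path dangling from $\phi_2^i$ (this is legitimate because $\phi_2^i$ is the unique attachment vertex of the rung on $P_2$), produces a copy of the line graph of a subdivided $K_{1,3}$. Using disjoint blocks of consecutive rungs and the disjoint subpaths of $P_2$ around them, we build any prescribed $T\in L(\mathcal{S})$ as an induced subgraph.

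The main obstacle in both cases is ensuring that the arms of the resulting tripods (or of the three paths attached to the triangle) attain the prescribed lengths. This is handled by choosing each block of consecutive rungs sufficiently large: the subpath of the relevant rail between the extremal attachment points of such a block has length at least the block size minus one, which provides arms of arbitrary required length. Consequently, selecting $k$ as a suitable function of $|V(S)|$ and $|V(T)|$, together with the multiplicative overhead from pigeonhole and cleaning, completes the argument.
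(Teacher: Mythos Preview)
Your argument has a genuine gap: in both the $(1,1)$ case and the ``some junction is type~$2$'' case, the third arm of the tripod (respectively, the path dangling from the central triangle) is taken to lie entirely along a single rung path $\Phi^i$. But the definition of a rope ladder places no lower bound on the length of the $\Phi^i$; in particular they may all have length~$0$. Concretely, consider a rope ladder in which every rung is a single vertex with exactly one neighbour on each rail (so all junctions are type~$1$). Your caterpillar on $V(P_1)\cup\bigcup_i V(\Phi^i)$ then has legs of length one, and the only tripods it contains have one arm of length at most~$1$. Hence you cannot realise $S_p$ for $p\geq 2$ in this way, no matter how large $k$ is; the same obstruction applies to the dangling path in your type-$2$ case. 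Your final paragraph about ``choosing each block of consecutive rungs sufficiently large'' only lengthens the two arms along the rail, not the third.

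The paper's proof avoids this by routing the third arm \emph{through} the rung and then continuing along the other rail $P_2$. Since the attachment intervals of the rungs on $P_2$ appear in order, a middle rung in a $(2p+1)$-rope ladder has at least $p$ rungs' worth of $P_2$ on either side, which guarantees the required arm length regardless of how short the rung itself is. With this in hand, the paper only needs to inspect the junction at $P_1$ (not the pair of junctions): type~$1$ gives $S_p$ centred on $P_1$, type~$2$ gives $T_p$ with the triangle at $P_1$, and type~$3$ gives $S_p$ centred at $\phi_1^i$. Splitting a $(2p+2)(2n-1)$-rope ladder into $2n-1$ non-adjacent $(2p+1)$-sub-ladders and applying pigeonhole yields $n$ copies of the same object. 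Your overall strategy is salvageable, but you must include $P_2$ in the construction of the third arm; once you do, the nine-way pigeonhole on type-pairs and the simultaneous cleaning become unnecessary.
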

\begin{proof}
Let $S_p\in \mathcal{S}$ be the graph obtained from the $K_{1,3}$ by subdividing each edge $p-1$ times, and let $T_p\in \mathcal{T}$ be its line graph.
Note that $S_p$ contain a unique vertex with degree $3$.
Let us call this vertex as the \emph{central vertex}.
Also, $T_p$ contains three vertices with degree $3$, which forms a triangle,
Let us call this triangle the \emph{central triangle}.

Fix $S\in \mathcal{S}$ and $T\in L(\mathcal{S})$.
Then there exist integers $n$ and $p$ so that each component of $S$ is an induced subgraph of $S_p$ and each component of $T$ is an induced subgraph of $T_p$.
Thus, in the following it suffices to assume that $S=nS_p$ and $T=nT_p$.

Let $G$ be a $(2p+1)$-rope ladder, where $P_1,P_2$ are the two rail paths and $\Phi$ is the $(p+1)$-th rung path.
Let $\phi_1\in N(P_1)$ be an endpoint of $\Phi$.
If the junction connecting $\Phi^{p+1}$ to $P_1$ is \textbf{type 1}, then $G$ contains $S_p$ as an induced subgraph such that the central vertex is the unique vertex in $N(\phi_1)\cap V(P_1)$.
If the junction connecting $\Phi^{p+1}$ to $P_1$ is \textbf{type 2}, then $G$ contains $T_p$ as an induced subgraph such that the central triangle is formed by the two vertices in $N(\phi_1)\cap V(P_1)$ together with $\phi_1$.
Lastly, if the junction connecting $\Phi$ to $P_1$ is \textbf{type 3}, then $G$ contains $S_p$ as an induced subgraph such that the central vertex is $\phi_1$.
This last occurrence of $S_p$ is due to the cleaning operation described above.
Notice that in all three cases, we use $P_1$ to create two of the three subdivided paths of $S_p$ or $T_p$, and the remaining path will be found on $P_2$.

Finally, let $k \coloneqq (2p+2)(2n-1)$.
Then every $k$-ladder $H$ contains $(2n-1)$ pairwise non-adjacent $(2p+1)$-rope ladders as induced subgraphs.
Therefore, by our discussion above, $H$ contains either $n$ copies of $S_p$ or $T_p$ as an induced subgraph, which gives either $S$ or $T$ as an induced subgraph as desired.
\end{proof}

Notice that \cref{thm:TripodAlphaTW} is now directly implied by \cref{thm:twopaths}, \cref{thm:somethingpath}, and \cref{lem:tripodinladder}.

\subsection{Excluding an induced wheel minor}
This section is the first time where we consider specifically a cycle rope ladder instead of a rope ladder.
By \Cref{thm:pathandcycle}, every $K_{1,d}$-free graph $G$ of large enough $\alpha$-treewidth contains an induced cycle $C$ and an induced path $P$ such that their closed neighborhoods cannot be separated by a set of small independence number.
This allows us to choose $H=C$ in an application of \Cref{thm:somethingpath} which results in us obtaining a cycle rope ladder as an induced subgraph of $G$.

\begin{corollary}\label{thm:findingcycleropeladder}
    Let $k$ and $d\geq 2$ be positive integers.
    There exists a function $\tau\colon\mathbb{N}^2\to\mathbb{N}$ such that every $K_{1,d}$-free graph $G$ with $\atw(G)\geq \tau(k,d)$ contains a $k$-cycle rope ladder as an induced subgraph.
\end{corollary}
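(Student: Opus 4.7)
The plan is to combine \Cref{thm:pathandcycle} with \Cref{thm:somethingpath} in exactly the same spirit as the proof of \Cref{thm:themainresult}, but replacing the two-paths separator result (\Cref{thm:twopaths}) with the path--cycle separator result (\Cref{thm:pathandcycle}). Concretely, set
\[
\tau(k,d) \coloneqq 8\eta(k,d) + 8d - 14,
\]
where $\eta(k,d) = 8(d-1)k^{k+1}$ is the function appearing in the hypothesis of \Cref{thm:somethingpath}. Since $\eta(k,d) \geq d$ for all $k \geq 1$ and $d \geq 2$, the side condition on $\eta$ required by \Cref{thm:pathandcycle} is satisfied.

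First, I would invoke \Cref{thm:pathandcycle} on $G$ with parameter $\eta = \eta(k,d)$. As $\atw(G) \geq 8\eta(k,d) + 8d - 14$, this produces a non-adjacent pair consisting of an induced path $P$ and an induced cycle $C$ in $G$ such that every separator between $N[P]$ and $N[C]$ has independence number at least $\eta(k,d)$.

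Next, I would apply \Cref{thm:somethingpath} with the path $P$, the subgraph $H \coloneqq C$, and $\ell \coloneqq k$. All hypotheses are in place by construction: $P$ is induced, $C$ is a subgraph non-adjacent to $P$, and every separator between $N[P]$ and $N[C]$ has independence number at least $\eta(k,d) = \eta(\ell,d)$. The conclusion then yields a $k$-$C$-rope ladder as an induced subgraph of $G$ whose rail subgraph is $C$. Because $C$ is an induced cycle, this induced subgraph is by definition a $k$-cycle rope ladder, completing the proof.

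No real obstacle arises: both components have already been developed in the preceding sections, and the corollary is obtained by feeding the output of \Cref{thm:pathandcycle} as input to \Cref{thm:somethingpath}. The only bookkeeping items are verifying the lower bound $\eta(k,d) \geq d$ (so that \Cref{thm:pathandcycle} applies) and defining $\tau$ so that the tree-independence number hypothesis of \Cref{thm:pathandcycle} is exactly met, both of which are routine.
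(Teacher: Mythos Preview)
Your proposal is correct and mirrors the paper's approach exactly: the corollary is obtained by feeding the non-adjacent path--cycle pair from \Cref{thm:pathandcycle} into \Cref{thm:somethingpath} with $H=C$, precisely as you describe. The explicit choice $\tau(k,d)=8\eta(k,d)+8d-14$ and the verification that $\eta(k,d)\geq d$ are the only bookkeeping details, and you have handled both.
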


Let $W_\ell$ denote the wheel with $\ell$ spokes.
In \cite{choi2025excludinginducedwheelminor}, the authors showed that if a $K_{1,d}$-free graph has large -- in $\ell$ -- tree-independence number, it contains $W_{\ell}$ as an induced minor.

\begin{theorem}[\cite{choi2025excludinginducedwheelminor}]\label{thm:forbiddingthewheel}
    There exists a function $f(\ell,d)\in\mathcal{O}(d\ell^{11}+2^{(\max\{ \ell,d\})^5})$ such that for all positive integers $d$ and $\ell\geq 3$ and every $K_{1,d}$-free graph $G$, either
    \begin{enumerate}
        \item $G$ contains $W_{\ell}$ as an induced minor, or
        \item $\alpha\text{-}\mathsf{tw}(G)\leq f(\ell,d)$.
    \end{enumerate}
\end{theorem}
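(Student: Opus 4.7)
The plan is to use \Cref{thm:pathandcycle} to extract a non-adjacent induced cycle $C$ and induced path $P$ with a strong separator condition, and then show that this condition forces enough pairwise non-adjacent $C$-to-$P$ paths to directly assemble $W_\ell$ as an induced minor. This bypasses the full shuffled-rope-ladder machinery of \Cref{thm:somethingpath} and is what should give the sharp bound.

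More precisely, I would set $\eta := \ell(d-1)$ and apply \Cref{thm:pathandcycle}, since the hypothesis $\alpha\text{-}\mathsf{tw}(G) \geq 8\ell(d-1) + 8d - 14$ matches the required $8\eta + 8d - 14$. This yields non-adjacent $P$ and $C$ such that every separator between $N[P]$ and $N[C]$ has independence number at least $\ell(d-1)$; moreover, inspecting the construction of $C$ in the proof of \Cref{thm:pathandcycle} shows $|V(C)|\geq \ell$, which we will need in the final assembly.

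Next I would mirror the greedy step from the proof of \Cref{thm:somethingpath}. Iteratively select a shortest $V(C)$-to-$V(P)$ path $Q_i$ that is non-adjacent to the previously chosen $Q_j$'s, until no such path can be added, and let $m$ be the number of paths produced. The aim is to show $m \geq \ell$. If instead $m < \ell$, an analogue of \Cref{claimc} yields a separator between $N[P]$ and $N[C]$ derived from a ``cleaned'' neighborhood of $Q_1 \cup \cdots \cup Q_m$; the core calculation is then to check that this separator has independence number strictly less than $\ell(d-1)$, contradicting the conclusion of \Cref{thm:pathandcycle}. The main obstacle is making this bookkeeping tight enough to reach the bound $\ell(d-1)$: unlike the ladder setting, the wheel does not require the second-endpoint restriction that costs an extra factor of $2$ in \Cref{thm:somethingpath}, and the fact that shortest $V(C)$-to-$V(P)$ paths have no internal vertex in $N[V(C)] \cup N[V(P)]$ should let us charge the separator only to the endpoints, delivering the desired $\ell(d-1)$ bound.

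Given $\ell$ pairwise non-adjacent shortest $V(C)$-to-$V(P)$ paths $Q_1, \ldots, Q_\ell$ with endpoints $a_i \in V(C)$ and $b_i \in V(P)$, I would build $W_\ell$ as an induced minor as follows. Non-adjacency of the $Q_i$'s forces the $a_i$'s to be pairwise distinct on $C$, so using $|V(C)| \geq \ell$ we may partition $V(C)$ into $\ell$ arcs $A_1, \ldots, A_\ell$ with $a_i \in V(A_i)$. Define branch sets $X_i \coloneqq V(A_i)$ for $i \in [\ell]$ and a hub set $X_0 \coloneqq V(P) \cup \bigcup_{i=1}^\ell (V(Q_i) \setminus \{a_i\})$. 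Since $C$ is induced, contracting the $X_i$ gives $C_\ell$; the hub $X_0$ is connected via $P$ and adjacent to each $X_i$ through the neighbor of $a_i$ on $Q_i$. Shortest-path-ness ensures internal vertices of $Q_i$ avoid $N[V(C)]\cup N[V(P)]$, and pairwise non-adjacency of the $Q_i$'s prevents any extra edges between $X_0$ and non-incident arcs, so the contracted graph is isomorphic to $W_\ell$.
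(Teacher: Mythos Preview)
Your approach diverges from the paper's and has a genuine gap in the step where you conclude $m \geq \ell$. The separator supplied by an analogue of \Cref{claimc} has the form $N^1 \cup N[S^1]$, where $N^1$ consists of those vertices of $N[C]\setminus N[S^1]$ that are reachable from the cleaned neighbourhood $\overline{\mathcal{Q}}^1$ through $G^1$. The whole point of \Cref{claimc} in the proof of \Cref{thm:somethingpath} is that $\alpha(N^1)$ stays \emph{large} (at least $\eta - 4(d-1)\ell$), and this is precisely what drives the next iteration; it does not hand you a separator of independence number below $\ell(d-1)$. Your plan to ``charge the separator only to the endpoints'' accounts for $N[S^1]$ but ignores the $N^1$ term entirely. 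In fact it is not clear that $\ell$ pairwise non-adjacent $V(C)$--$V(P)$ paths must exist under the hypothesis at all: many vertices of $C$ may attach to the component containing $P$ through a common clique, forcing any two $V(C)$--$V(P)$ paths to be adjacent near $C$ while the separator condition is still satisfied.

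The paper's argument (given as \Cref{thm:findingwheel}, which improves the cited \Cref{thm:forbiddingthewheel}) avoids rung paths altogether. After applying \Cref{thm:pathandcycle} with $\eta=\ell(d-1)$ exactly as you do, it lets $K$ be the component of $G-V(C)$ containing $P$ and observes that $S\coloneqq N(C)\cap V(K)$ is itself a separator between $N[C]$ and $N[P]$, so $\alpha(S)\geq \ell(d-1)$. Every vertex of $S$ has a neighbour in $N(K)\subseteq V(C)$, and $K_{1,d}$-freeness then forces $|N(K)|\geq \ell$. Contracting the entire component $K$ to a single hub vertex and contracting $C$ down to $\ell$ chosen vertices of $N(K)$ yields $W_\ell$ directly---no individual non-adjacent paths are needed.
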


By using \Cref{thm:pathandcycle}, we can simplify the proof and improve the function $f$ in \Cref{thm:forbiddingthewheel} significantly.

\thmfindingwheel*
\begin{proof}
    Using \Cref{thm:pathandcycle}, we can find an induced path $P$ and an induced cycle $C$ such that $P$ and $C$ are non-adjacent and every separator between $N[P]$ and $N[C]$ has independence number at least $\ell (d-1)$ in $G$.
    Let $K$ be the component of $G-C$ containing $P$ and let $S \coloneqq N(C)\cap K$.
    Then $S$ is a separator between $N[C]$ and $N[P]$, so we have $\alpha(S)\geq \ell (d-1)$.
    Note that $N(K)=C\cap N(S)$.
    As $\ell (d-1) \leq \alpha(S)\leq (d-1)\cdot |N(K)|$, we have $|N(K)|\geq \ell$.
    Now select $N$ to be a set of $\ell$ neighbors of $K$ on $C$.
    Then, discard all components of $G-C$ other than $K$, contract $K$ into a single vertex, and contract edges of $C$ until the only remaining vertices of $C$ are precisely those of $N$.
    It is easy to see that the resulting graph is isomorphic to $W_{\ell}$.
\end{proof}

\subsection{Excluding long thetas and long prisms}
The last type of substructures we show to be unavoidable in $K_{1,d}$-free graphs of large tree-independence number are long thetas and long prisms.
In $\cite{chudnovsky2024treeindependencenumberiii}$, the authors proved that if we forbid thetas and generalized prisms as induced subgraphs in a $K_{1,d}$-free graph, then the tree-independence number is bounded.

\begin{theorem}[\cite{chudnovsky2024treeindependencenumberiii}]\label{thm:thetaandprism}
    For any positive integer $d$, the class of ($K_{1,d},\mathsf{Theta},\mathsf{Prism}$)-free graphs has bounded tree-independence number.
\end{theorem}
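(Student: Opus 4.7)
The plan is to combine \Cref{thm:findingcycleropeladder} with a structural extraction argument. For $k'=k'(k,d)$ chosen sufficiently large, \Cref{thm:findingcycleropeladder} guarantees that every $K_{1,d}$-free graph $G$ with large enough tree-independence number contains an induced $k'$-cycle rope ladder $L$, with rail cycle $C$, rail path $P$, and pairwise non-adjacent rungs $\Phi^1,\dots,\Phi^{k'}$. It therefore suffices to extract from $L$ a $k$-long theta or a $k$-long prism as an induced subgraph. For each rung $i$, let $A_i=N(\phi_1^i)\cap V(C)$ and $B_i=N(\phi_2^i)\cap V(P)$ be its attachment sets, of size at most $2(d-1)$ by \Cref{lem:nbdonpath}. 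Since the rungs are pairwise non-adjacent, the vertices $\{\phi_1^i : u\in A_i\}$ form an independent neighborhood of any fixed $u\in V(C)$, so $K_{1,d}$-freeness bounds by $d-1$ the number of rungs attaching at any vertex of $C$, and similarly of $P$. Hence $|V(C)|,|V(P)|\ge k'/(2(d-1)^2)$, which can be made arbitrarily large.

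The argument then splits by whether many $A_i$ contain a pair of $C$-adjacent vertices. In the \emph{triangular} case, at least half the rungs have some $u_i,u_i'\in A_i$ with $u_iu_i'\in E(C)$, yielding a triangle $\{\phi_1^i,u_i,u_i'\}$. A double application of \Cref{ErdosSzekeres} to the cyclic order along $C$ and the linear order along $P$ extracts two rungs $\Phi^a,\Phi^b$ whose triangles are separated by arcs of length at least $k$ on both sides of $C$ and whose $P$-attachments are at distance at least $k$ on $P$. The two triangles, the two arcs of $C$ between them, and the path traversing $\Phi^a$, the subpath of $P$, and $\Phi^b$ together form an induced $k$-long prism. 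In the \emph{non-triangular} case, at least half the rungs have attachments consisting of pairwise non-$C$-adjacent vertices. For each such rung with $|A_i|\ge 2$, we perform a cleaning step: reroute the short arc of $C$ between two non-adjacent attachment vertices through $\phi_1^i$, absorbing $\phi_1^i$ into a modified induced cycle $C'$ and reducing $|A_i|$ to a single vertex. After cleaning, all such rungs have single-vertex attachments on $C'$, and a further application of \Cref{ErdosSzekeres} yields two rungs $\Phi^a,\Phi^b$ whose $C'$-attachments are at arc-distance at least $k$ on both sides and whose $P$-attachments are at distance at least $k$; the two arcs of $C'$ together with the third path through $\Phi^a$, the $P$-subpath, and $\Phi^b$ form an induced $k$-long theta with ends on $C'$.

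The main obstacle is coordinating all length requirements simultaneously: the two selected rungs must be well-separated on $C$ (so that both cycle arcs are of length at least $k$) \emph{and} on $P$ (so that the third path is of length at least $k$). This is resolved by the double Erd\H{o}s--Szekeres extraction, which, from the abundance of rungs, produces a large subset monotone in both the cyclic order on $C$ and the linear order on $P$; the extremal pair of this subset automatically satisfies every distance requirement. A secondary point is that cleaning must preserve the induced structure and not shrink $C$ or the rungs too much; this is handled by observing that $C$ is induced (so the rerouted cycle remains induced), each rung is cleaned at most once, and each cleaning reduces $|V(C)|$ by at most $O(d)$, so the final structure is still a valid cycle rope ladder with cycle and rungs long enough to host the desired $k$-long theta or prism.
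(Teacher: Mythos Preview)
Your high-level plan matches the paper's: invoke \Cref{thm:findingcycleropeladder} to obtain a large cycle rope ladder, then extract a long theta or prism from it (this is exactly \Cref{lem:cycleropeladderthetaandprism}). The extraction argument, however, has real gaps.

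In the triangular case, ``some $u_i,u_i'\in A_i$ with $u_iu_i'\in E(C)$'' does not mean $A_i=\{u_i,u_i'\}$. If $\phi_1^a$ has further neighbours on $C$, those extra edges destroy the induced prism. The paper avoids this by classifying junctions into types 1/2/3 \emph{and} by tracking \emph{junction areas}: maximal arcs of $C$ that contain one rung's attachments but no other rung's. A rung whose attachments lie in several junction areas (so $A_i$ is interleaved with other rungs' attachments around the cycle) cannot be handled by a local operation; this is the paper's Case~2, and in particular Case~2.3 needs a genuinely different construction that reroutes the three prism paths through \emph{two} junction areas of \emph{each} of two rungs.

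Your cleaning step does not survive this situation. If $A_i$ occupies several separated arcs, rerouting one ``short arc'' through $\phi_1^i$ leaves the remaining attachments of $\phi_1^i$ on $C'$; and the removed arc may carry other rungs' attachments, so cleaning one rung can kill others. The claim that each cleaning shrinks $C$ by $O(d)$ is also unjustified: the two chosen non-adjacent attachment vertices can be far apart on $C$.

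Finally, Erd\H{o}s--Szekeres yields a monotone subsequence, not a well-spaced one; the extremal pair of a monotone subset need not be at distance $\geq k$ on both $C$ and $P$. The paper instead greedily selects four rungs whose attachment sets are pairwise at distance $\geq k$ on $C$ and on $P$: picking one rung forces you to discard only $O((d-1)^2k)$ others (since each vertex of $C$ or $P$ sees at most $d-1$ of the $\phi_j^i$), so four survive. After this spacing step, a pigeonhole on junction type among the four --- together with the junction-area case analysis --- gives the induced long theta or prism.
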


In what follows we present a strengthening of \cref{thm:thetaandprism} that replaces the class $\mathsf{Theta}$ by the class of $k$-long thetas and the class $\mathsf{Prism}$ by $k$-long prisms.

\thmklongthetaandprism*

By \Cref{thm:findingcycleropeladder}, it suffices to show that a large enough $K_{1,d}$-free cycle rope ladder either contains a $k$-long theta or a $k$-long prism as an induced subgraph.
As we need three paths between two vertices or two triangles to find a theta or a prism, our strategy is to find two of these paths as subpaths of the induced cycle of the cycle rope ladder, and to find the third one using the rail path.

\begin{lemma}\label{lem:cycleropeladderthetaandprism}
    Let $k$ and $d\geq 2$ be positive integers, and let $\ell\geq 12(d-1)^2(2k-1)+1$.
    Then every $\ell$-cycle rope ladder contains either a $k$-long theta or a $k$-long prism as an induced subgraph.
\end{lemma}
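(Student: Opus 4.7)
The plan is to pick two rung paths $\Phi^a, \Phi^b$ of the $\ell$-cycle rope ladder and assemble either a $k$-long theta or a $k$-long prism from the two arcs of the rail cycle $C$ cut out by the $C$-attachments of $\Phi^a, \Phi^b$, together with the third path $\Phi^a \cup (\text{subpath of } P) \cup \Phi^b$. If the common junction type of both rungs to $C$ is type~1, we obtain a theta whose two endpoints are the single $C$-attachment vertices. If it is type~2, we obtain a prism whose two triangles are $\{\phi_1^i, u_1^i, u_2^i\}$ for $i \in \{a,b\}$. Type-3 junctions are first transformed into type~1 via the cleaning operation from the previous section, modifying $C$ accordingly.

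The selection of $\Phi^a, \Phi^b$ proceeds by two layers of pigeonhole. First, partition the rungs by junction type on $C$ (three types); at least $\lceil \ell / 3 \rceil \geq 4(d-1)^2(2k-1)+1$ share a common type. Second, greedily extract rungs with pairwise-disjoint $C$-attachments: by \Cref{lem:nbdonpath} each rung has at most $2(d-1)$ attachments on $C$, and by $K_{1,d}$-freeness combined with pairwise non-adjacency of rungs, each vertex of $C$ is a $C$-attachment of at most $d-1$ rungs. Thus the conflict graph has maximum degree at most $2(d-1)(d-2)$, and a greedy independent set contains at least $m \coloneqq 4k-2$ rungs; the factor $12(d-1)^2$ in the hypothesis is calibrated precisely for this.

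Among these $m$ rungs I would then choose $\Phi^a, \Phi^b$ simultaneously well-separated on $C$ (cyclic distance between their $C$-attachments lies in $[k, m-k]$, so both arcs of $C$ have length at least $k$) and on $P$ ($P$-distance between their $P$-attachments is at least $k$, so the third path has length at least $k$). The existence of such a pair follows from a double-counting argument: the number of pairs with good $C$-separation is $m(m-2k+1)/2 = (2k-1)^2$, while the number of pairs with $P$-rank distance less than $k$ among the $m$ selected rungs is at most $(k-1)m = (2k-1)^2 - (2k-1)$, leaving at least $2k-1 \geq 1$ pair satisfying both constraints. Finally, the assembled three paths form an induced $k$-long theta or prism because $C$ is a chordless cycle so the two arcs have no shortcuts, rungs are pairwise non-adjacent, internal rung vertices are non-adjacent to $V(C) \cup V(P)$, and $C$ and $P$ are non-adjacent.

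The main obstacle is the simultaneous well-separation on $C$ and $P$, handled by the double-counting estimate above; the scaling $2k-1$ in the hypothesis reflects exactly this. A secondary technical point is the careful execution of the cleaning operation on type-3 junctions, ensuring that the resulting modified rail cycle remains a chordless induced cycle usable in the final verification step and that the cleaning at one rung does not interfere with another rung's attachments.
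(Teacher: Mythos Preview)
Your plan works cleanly for junction types~1 and~2, but the handling of type~3 has a genuine gap that is more than a technical point. The cleaning operation is defined for a junction to a \emph{path} $Q$, where the segment $w_1Qw_2$ to be replaced is unambiguous; on a cycle there are two arcs between $w_1$ and $w_2$, and for the new cycle to remain induced you must excise the arc containing all remaining $C$-neighbours of $\phi_1^a$. When two type-3 rungs $\Phi^a,\Phi^b$ have \emph{interleaving} $C$-attachments (say $\Phi^a$ attached at positions $1,50$ and $\Phi^b$ at $25,75$ on a $100$-cycle), whichever arc you excise for $\Phi^a$ deletes one of $\Phi^b$'s attachment vertices, so the second cleaning step is wrecked. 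Disjointness of attachments does not preclude interleaving, and one can arrange $m$ type-3 rungs so that \emph{every} pair interleaves (antipodal chords). The paper does not use cleaning here at all; instead it first isolates \emph{four} rungs that are mutually at distance $\geq k$ on both $C$ and $P$, then runs a case analysis on the pattern of their ``junction areas'' along $C$. The interleaving situation is the paper's Case~2, culminating in Case~2.3, where a $k$-long prism is assembled directly from two alternating junction areas of each of two rungs together with arcs of $C$ between them, with one prism path routed through a rung endpoint rather than through $P$. Your two-rung scheme has no analogue of this construction.

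A second, more easily patched issue: your double-counting bounds the number of pairs with small $P$-\emph{rank} among the $m$ selected rungs, but what you actually need is that the $P$-distance between the $P$-attachments of $\Phi^a$ and $\Phi^b$ is at least $k$. Since your greedy step only enforced disjoint $C$-attachments, up to $d-1$ of the selected rungs can share a common $P$-neighbour, so $P$-rank distance $\geq k$ does not imply $P$-distance $\geq k$. This can be repaired by adding $P$-conflicts to the greedy selection, but then the constant $12(d-1)^2(2k-1)+1$ no longer suffices; the paper avoids the issue by directly demanding distance $\geq k$ on both $C$ and $P$ when choosing its four rungs.
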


\begin{proof}
First, we fix our notation for the $\ell$-cycle rope ladder $G$ to be the same as in the definition:
Let $C$ be the rail cycle of $G$, $P$ be the rail path, and $\Phi^1,\Phi^2,\cdots,\Phi^\ell$ be the rung paths where $\phi_1^i\in N(C)$ and $\phi_2^i\in N(P)$ are the endpoints of $\Phi^i$ for $i\in [\ell]$.
Note that the length of $C$ is at least $\ell/(d-1)\geq 12(d-1)(2k-1)$.

We claim that there exists $A \subseteq [\ell]$ such that 
    \begin{itemize}
        \item $|A|= 4$,
        \item for each $i,j\in A$ with $i\neq j$, and for each $v\in N(\phi_1^i)\cap C$ and $w\in N(\phi_1^j)\cap C$, we have $\mbox{dist}_C(v,w)\geq k$, and
        \item for each $i,j\in A$ with $i\neq j$, and for each $v'\in N(\phi_2^i)\cap P$ and $w'\in N(\phi_2^j)\cap P$, we have $\mbox{dist}_P(v,w)\geq k$.
    \end{itemize}
To find such $A$, we iteratively choose an element $a\in [\ell]$ that has not yet been discarded and discard all elements of $[\ell]\setminus\{ a\}$ for which, if chosen next, one of the bottom two conditions would be violated.
All we need to do is to show that the number of elements removed each step by this way is bounded.
Say we chose $a\in [\ell]$ in one step.
Then we have $|N(\phi_1^a)\cap C|\leq 2(d-1)$, so the number of vertices in $C$ whose distance to $N(\phi_1^a)\cap C$ is less than $k$ is
$$\Big|\bigcup_{v\in N(\phi_1^a)\cap C}\{w\in V(C)\mid \mbox{dist}_C(v,w)<k\}\Big|\leq 2(d-1)(2k-1).$$
Since $\{\phi_1^i\}_{i\in [\ell]}$ is an independent set, each $w$ in the above set may adjacent to at most $d-1$ of the $\phi_1^i$'s.
Thus the number of $b\in [\ell]$ that cannot be in $A$ after choosing $a$ is at most
\begin{align*}
| \{b\in [\ell]\mid \text{there exists } v\in N(\phi_1^a)\cap C, \text{there exists } w\in N(\phi_1^b)\cap C \mbox{ such that } \mbox{dist}_C(v,w)<k \}|\\
\leq 2(d-1)^2(2k-1).
\end{align*}
We then apply the same procedure to $P$, so whenever we choose $a$, at most $4(d-1)^2(2k-1)$ other elements in $[\ell]$ become unavailable.
Since $\ell\geq 16(d-1)^2(2k-1)$, we are able to repeat this process at least four times, which proves the claim.
Without loss of generality, we may now assume $A=[4]$.
\medskip

Next we explain how to construct a $k$-long theta or a $k$-long prism according to the types of adjacency between $\phi_1^j$'s and $C$.

For each $i\in [4]$, we say that a path $D\subseteq C$ is a \emph{junction area of $\Phi^i$} if the endpoints of $D$ are in $N(\phi_1^i)\cap C$ for some $i$, $D$ does not contains any vertex in $N(\phi_1^j)\cap C$ for other $j\neq i$, and $D$ is maximal among such paths.
By the definition of $A$, the distances between two different junction areas in $C$ are at least $k$.
This guarantees that the paths we find from now on each have length at least $k$.

We proceed with a case distinction on the number of junction areas of $\Phi^i$ for each $i\in [4]$.

\begin{figure}
    \centering
    \begin{tikzpicture}[scale=2.5]
    \pgfdeclarelayer{background}
    \pgfdeclarelayer{foreground}
    \pgfsetlayers{background,main,foreground}
    \begin{pgfonlayer}{background}
        \pgftext{\includegraphics[width=6cm]{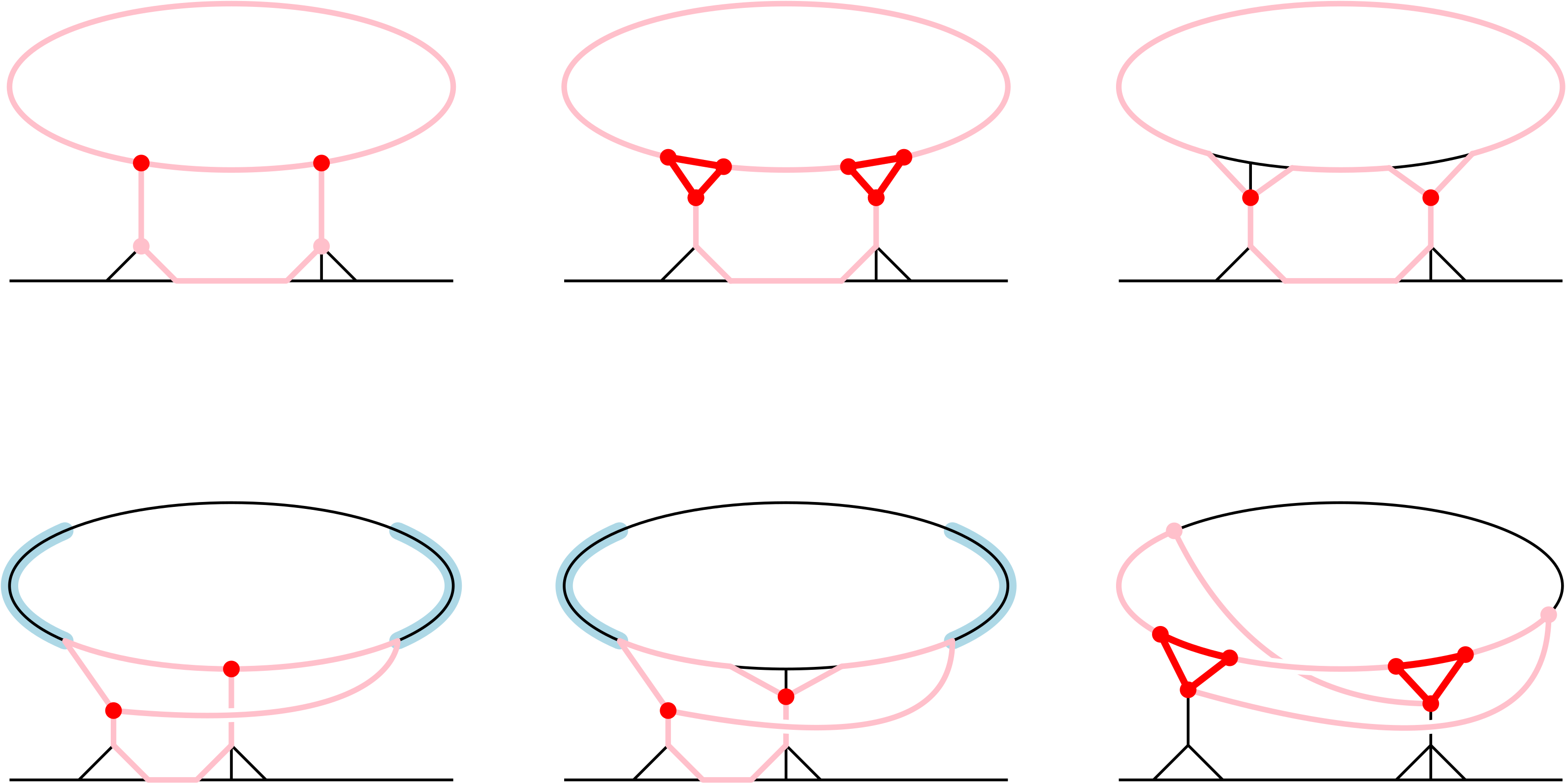}} at (C.center);
    \end{pgfonlayer}{background}
    \begin{pgfonlayer}{main}
        \node[]() at (-2.14,0.15) {\textbf{Case 1.1}} ;
        \node[]() at (-2.44,1) {\color{red}{$d_i$}} ;
        \node[]() at (-1.8,1) {\color{red}{$d_j$}} ;
        \node[]() at (-2.59,0.62) {\color{pink!90!black}{$\phi_2^i$}} ;
        \node[]() at (-1.65,0.62) {\color{pink!90!black}{$\phi_2^i$}} ;
        \node[]() at (-2.12,0.53) {\color{pink!90!black}{$\tilde{P}$}} ;
        \node[]() at (0,0.15) {\textbf{Case 1.2}} ;
        \node[]() at (-0.42,1) {\color{red}{$d_{i1}$}} ;
        \node[]() at (-0.16,0.96) {\color{red}{$d_{i2}$}} ;
        \node[]() at (0.17,0.96) {\color{red}{$d_{j1}$}} ;
        \node[]() at (0.43,1) {\color{red}{$d_{j2}$}} ;
        \node[]() at (-0.23,0.7) {\color{red}{$\phi_1^i$}} ;
        \node[]() at (0.46,0.71) {\color{red}{$\phi_1^j$}} ;
        \node[]() at (2.14,0.15) {\textbf{Case 1.3}} ;
        \node[]() at (1.9,0.7) {\color{red}{$\phi_1^i$}} ;
        \node[]() at (2.58,0.71) {\color{red}{$\phi_1^j$}} ;
        \node[]() at (-2.14,-1.8) {\textbf{Case 2.1}} ;
        \node[]() at (-2.68,-1.23) {\color{red}{$\phi_1^i$}} ;
        \node[]() at (-2.11,-0.93) {\color{red}{$d_j$}} ;
        \node[]() at (-2.9,-0.42) {\color{cyan}{$D_1^i$}} ;
        \node[]() at (-1.33,-0.42) {\color{cyan}{$D_2^i$}} ;
        \node[]() at (0,-1.8) {\textbf{Case 2.2}} ;
        \node[]() at (-0.555,-1.23) {\color{red}{$\phi_1^i$}} ;
        \node[]() at (0.13,-1.17) {\color{red}{$\phi_1^j$}} ;
        \node[]() at (-0.76,-0.42) {\color{cyan}{$D_1^i$}} ;
        \node[]() at (0.8,-0.42) {\color{cyan}{$D_2^i$}} ;
        \node[]() at (2.14,-1.8) {\textbf{Case 2.3}} ;
        \node[]() at (1.4,-0.4) {\color{pink!90!black}{$d_{i12}$}} ;
        \node[]() at (1.5,-0.81) {\color{red}{$d_{j11}$}} ;
        \node[]() at (1.8,-0.92) {\color{red}{$d_{j12}$}} ;
        \node[]() at (2.3,-0.94) {\color{red}{$d_{i21}$}} ;
        \node[]() at (2.6,-0.89) {\color{red}{$d_{i22}$}} ;
        \node[]() at (3.1,-0.9) {\color{pink!90!black}{$d_{j21}$}} ;
        \node[]() at (1.45,-1.2) {\color{red}{$\phi_1^j$}} ;
        \node[]() at (2.6,-1.2) {\color{red}{$\phi_1^i$}} ;
    \end{pgfonlayer}{main}
    \begin{pgfonlayer}{foreground}
    \end{pgfonlayer}{foreground}
\end{tikzpicture}
    \caption{The 6 cases towards finding a $k$-long theta or a $k$-long prism in a cycle rope ladder in the proof of \Cref{lem:cycleropeladderthetaandprism}.}
    \label{fig:thetaandprism}
\end{figure}

\paragraph{Case 1:} For each $i\in [4]$, there is a unique junction area of $\Phi^i$, say $D^i$.

\smallskip
Consider the type of junctions between $\Phi^i$ and $D^i$.
As there are three types of junctions and $|A|=4$, there must be $i\neq j$ such that $\Phi^i$ and $D^i$ has the same type of junction as $\Phi^j$ and $D^j$.
For any $i\in[A]$ we refer to the type of junction between $\Phi^i$ and $D^i$ as the \emph{type of $i$}

\paragraph{Case 1.1:} Both $i$ and $j$ are of \textbf{type 1}.
\smallskip

Let $V(D^i)=\{d_i\}$ and $V(D^j)=\{d_j\}$.
We now find a $k$-long theta as follows:
Let $d_i$ and $d_j$ be the ends of the theta.
Next, we choose two paths from $d_i$ to $d_j$ to be the two arcs of $C$ between them.
The third path of the theta is the path $d_i \phi_1^i \Phi^i \phi_2^i \widetilde{P} \phi_2^j \Phi^j \phi_1^j d_j$, where $\widetilde{P}$ is a shortest path between $\phi_2^i$ and $\phi_2^j$ in $G[P\cup \{\phi_2^i,\phi_2^j\}]$.
By the second part of the definition of $A$, this path has length at least $d$.
Except for the \textbf{Case 2.3}, one of the three paths will always be found in a similar way by routing through $P$.

\paragraph{Case 1.2:} Both $i$ and $j$ are of \textbf{type 2}.
\smallskip

Let $V(D^i)=\{d_{i1},d_{i2}\}$, $V(D^j)=\{d_{j1},d_{j2}\}$, where $d_{i1},d_{i2},d_{j1},d_{j2}$ appears according to the cyclic order on $C$.
Then a $k$-long prism can be constructed as follows:
The two triangles of the prism are $\{d_{i1},d_{i2},\phi_1^i\}$ and $\{d_{j1},d_{j2},\phi_1^j\}$.
We choose the path from $d_{i1}$ to $d_{j2}$ and the path from $d_{i2}$ to $d_{j1}$ to be the two internally disjoint arcs of $C$ between them.
The path from $\phi_1^i$ to $\phi_1^j$ can be obtained through $P$ as we did in \textbf{Case 1.1}.

\paragraph{Case 1.3:} Both $i$ and $j$ are of \textbf{type 3}.
\smallskip

Let $V(D^i)=\{d_{i1},\cdots,d_{ir}\}$ and let $V(D^j)=\{d_{j1},\cdots,d_{jr'}\}$, where $d_{i1},\cdots,d_{ir},d_{j1},\cdots,d_{jr'}$ appears in the cyclic order on $C$.
Then we can find a $k$-long theta as follows:
Let $\phi_1^i$ and $\phi_1^j$ be the ends of the theta.
We can choose the path from $d_{i1}$ to $d_{jr'}$ and the path from $d_{ir}$ to $d_{j1}$ to be the two disjoint arcs of $C$ between them.
By adding $\phi_1^i$ and $\phi_1^j$ as endpoints as depicted in \cref{fig:thetaandprism}, we can find two paths between the ends of the theta.
The third path can now be obtained through $P$ following the construction from \textbf{Case 1.1}.

\paragraph{Case 2:} There exists $i\in A$ such that $\Phi^i$ that has more than one junction area.
\smallskip

Let $D_1^i,\cdots,D_p^i$ be the junction areas of $\Phi^i$ appearing in the cyclic order on $C$.
Let $E_1^i,\cdots,E_p^i$ be the components of $C-\bigcup_{q\in [p]} D_q^i$ where $E_h^i$ is between $D_h^i$ and $D_{h+1}^i$ (considering $D_{p+1}^i=D_1^i$).
Then by the definition of junction areas, there is some $j\neq i\in A$ such that a junction area of $\Phi^j$, say $D_1^j$, is contained in one of the $E_q^i$'s, say $E_1^i$.

\paragraph{Case 2.1:} The junction connecting $\Phi^j$ to $E_1^i$ is of \textbf{type 1}.
\smallskip

In this case we have $|V(D_1^j)|=1$ and $D_1^j$ is the only junction area of $j$ contained in $E_1^i$.
Then we can find a $k$-long theta as following:
Let $\phi_1^i$ and $d_j\in V(D_1^j)$ be the ends of the theta.
The first and second path between the ends of the theta goes through two disjoint arcs of $C$ from $d_j$ to $D_1^i$ and $D_2^i$, respectively.
The third path can be obtained through $P$ by the similar way as before.

\paragraph{Case 2.2:} The junction connecting $\Phi^j$ to $E_1^i$ is of \textbf{type 3}.
\smallskip

Let $N(\Phi^j)\cap E_1^i=\{d_{j1},\cdots,d_{jr}\}$, where $D_1^i,d_{j1},\cdots,d_{jr},D_2^i$ appears in the cyclic order on $C$ as stated.
Note that $d_{j1}$ and $d_{jr}$ are not adjacent since the junction is of \textbf{type 3}.
We construct a $k$-long theta as follows:
Let $\phi_1^i$ and $\phi_1^j$ be the ends of the theta.
The first path can be obtained using an arc of $C$ from $d_{j1}$ to $D_1^i$, then adding $\phi_1^j$ as an endpoint.
The second path can be obtained using an arc of $C$ from $d_{jr}$ to $D_2^i$, and then adding $\phi_1^j$ as an endpoint.
Finally, the third path can be obtained through $P$ in the same way as we already did in \textbf{Case 1.1}.

\paragraph{Case 2.3:} The junction connecting $\Phi^j$ to $E_1^i$ is of \textbf{type 2}.
\smallskip

Furthermore we may assume that, since the two vertices in the junction are adjacent, only one junction area of $j$ is in $E_1^i$.
Similarly, we may assume for each $q\in [p]$, that each $E_q^i$ contains at most one junction area of $j\in [4]\setminus \{i\}$ and the junction connecting $\Phi^j$ to $E_q^i$ is of \textbf{type 2}.
This is because, if there is any possible choice for $j\in[4]\setminus\{ i\}$ such that there is a subpath $E$ of $C$ between two junction areas of $i$ and the junction of $\Phi^j$ to $E$ is \textsl{not} of \textbf{type 2}, we find ourselves in the situation of \textbf{Case 2.1} or \textbf{Case 2.2}.

If each $j\in [4]\setminus \{i\}$ has exactly one junction area, we may select two of them, remove the others, and then find ourselves in the same situation as \textbf{Case 1.2} which, as demonstrated above, implies the existence of a $k$-long prism.

So now we may assume that $j\in [4]\setminus \{i\}$ has two junction areas and is of \textbf{type 2}.

By switching the role of $i$ and $j$, we may further assume that $i$ has exactly one junction area between the junction areas of $j$, and their junctions are all of \textbf{type 2}.
In particular, the number of junction areas of $\Phi^i$ and $\Phi^j$ are the same, and the appearance of their junction areas alternates along $C$.
Let $D_1^j,\cdots,D_p^j$ be the junction areas of $j$, where $D_1^i,D_1^j,D_2^i,D_2^j,\cdots,D_p^i,D_p^j$ appear in the cyclic order on $C$ as listed.
Also let $D_1^i=\{d_{i11},d_{i12}\}$, $D_1^j=\{d_{j11},d_{j12}\}$, $D_2^i=\{d_{i21},d_{i22}\}$, and $D_2^j=\{d_{j21},d_{j22}\}$ where $d_{i12},d_{j11},d_{j12},d_{i21},d_{i22}$,and $d_{j21}$ appear in the cyclic order on $C$ as listed.

Now we may construct a $k$-long prism as follows:
The two triangles of our prism are $\{d_{j11},d_{j12},\phi_1^j\}$ and $\{d_{i21},d_{i22},\phi_1^i\}$.
The path from $d_{j11}$ to $\phi_1^i$ is obtained by taking the arc of $C$ from $d_{i12}$ to $d_{j11}$ not containing $d_{j12}$, and adding $\phi_1^i$ as an endpoint.
The path from $d_{j12}$ to $d_{i21}$ is obtained by taking the arc of $C$ between them not containing $d_{i22}$.
Similarly, the last path from $\phi_1^j$ to $d_{i22}$ is obtained by taking the arc of $C$ from $d_{i22}$ to $d_{j21}$ not containing $d_{i21}$, and adding $\phi_1^j$ as an endpoint.
See \cref{fig:thetaandprism} for an illustration.

Therefore, in each case, we found either a $k$-long theta or a $k$-long prism as an induced subgraph of $G$ as desired.
\end{proof}

\section{Conclusion}\label{sec:Conclusion}
In this section, we discuss a possiblity for a next step towards resolving \Cref{conj:K1dfreeinducedgrid}.
With \cref{thm:mainresult}, have constructed the first floor of the grid, so it seems natural to ask whether this result can be extended towards constructing the second floor.
For graphs $G$ and $H$, their \emph{Cartesian product} $G\openbox H$ is the graph with vertex set $V(G)\times V(H)$ where $(u_1,v_1)$ and $(u_2,v_2)$ are adjacent if and only if either $u_1=u_2$ and $v_1v_2\in E(H)$ or $v_1=v_2$ and $u_1u_2\in E(G)$.
A \emph{$k$-double ladder} is the graph obtained by the Cartesian product of the two paths $P_3\openbox P_k$.

\begin{figure}[h!]
    \centering
    \includegraphics[width=0.5\linewidth]{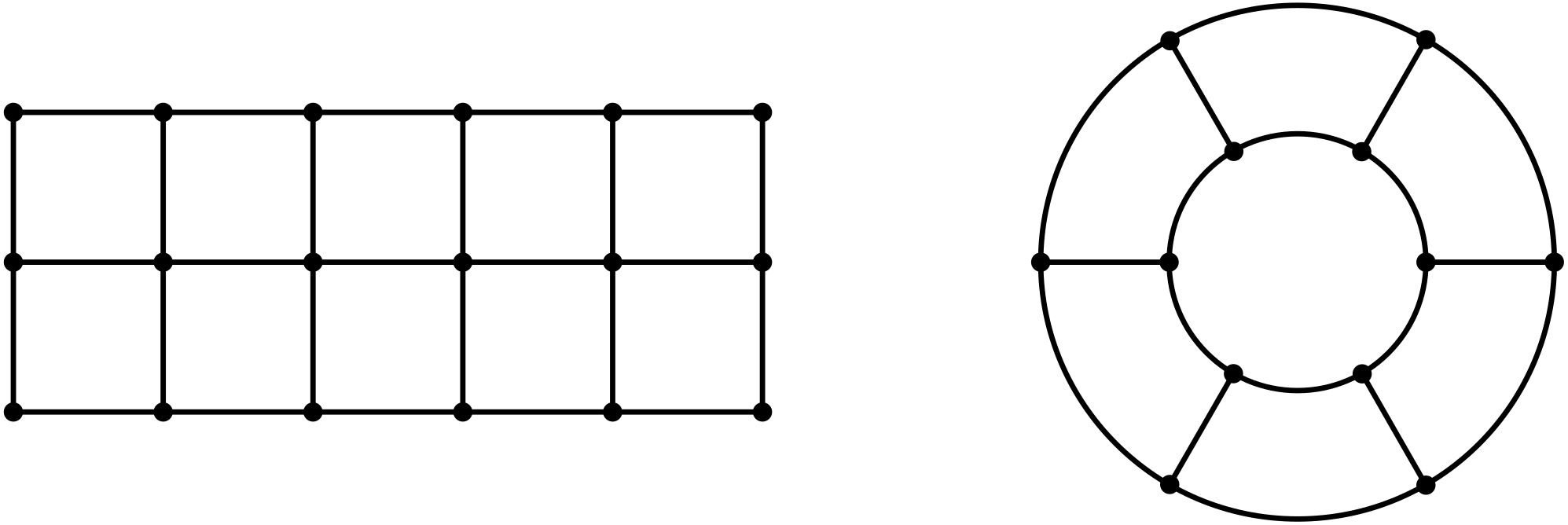}
    \caption{Possible next steps: a double ladder (left) and a double wheel (right)}
    \label{fig:doubleladderdoublewheel}
\end{figure}

\begin{conjecture}\label{conj:doubleladder}
    There exists a function $f_{\ref{conj:doubleladder}}\colon\mathbb{N}^2\rightarrow \mathbb{N}$ such that for every $K_{1,d}$-free graph $G$ with $\atw(G)\geq f_{\ref{conj:doubleladder}}(k,d)$ contains the $k$-double ladder as an induced minor.
\end{conjecture}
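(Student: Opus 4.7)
Our plan is to extend the paper's main argument by one floor. The first step is to prove a three-rail analogue of \Cref{thm:twopaths}: if $G$ is a $K_{1,d}$-free graph with $\atw(G)$ sufficiently large (as a function of $k$ and $d$), then there exist three pairwise non-adjacent induced paths $P_1, P_2, P_3$ such that any separator between the closed neighborhoods of any two of them has independence number at least some large $\eta(k,d)$. To prove this, one uses \Cref{thm:alphaduality} to obtain a strong bramble $\mathcal{B}$ of very large $\alpha$-order, and \Cref{lemma:dominatingpath} to obtain a dominating induced path $Q=v_1\cdots v_m$. The monotonicity argument used in the proof of \Cref{thm:pathandcycle} (tracking the $\alpha$-order of $\mathcal{B}^i=\{B\in\mathcal{B}\mid B\cap N[v_1Qv_i]\neq\emptyset\}$) lets us pick two breakpoints on $Q$ so that each of the three resulting pieces dominates a sub-bramble of large $\alpha$-order; pairwise separator-hardness then follows from the usual argument that a small separator cannot hit two of the sub-brambles simultaneously.

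The second step is to chain two applications of \Cref{thm:somethingpath}. Applying it with $H=P_1$ and $P=P_2$ produces a large shuffled rope ladder, whose ``$P_2$-side'' rail $R_2$ is an induced path built from vertices of $N[P_2]$ and the rung paths constructed during the iteration. To apply \Cref{thm:somethingpath} a second time with $H=R_2$ and $P=P_3$, we must verify that any separator between $N[R_2]$ and $N[P_3]$ in the residual graph still has large independence number. This should follow by a covering argument: if $S$ were such a small separator, then $S$ together with a bounded blow-up of the endpoints and second-endpoints used in the construction of the first rope ladder would separate $N[P_2]$ from $N[P_3]$ in $G$, contradicting the hypothesis of Step 1 provided we set $\eta$ large enough to absorb the blow-up.

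The third step is rung alignment. After the two applications we obtain many rungs from $P_1$ attached to positions $a_1<\cdots<a_{k_1}$ on $R_2$ and many rungs from $P_3$ attached to positions $b_1<\cdots<b_{k_2}$ on $R_2$. For a $k$-double ladder we need $k$ disjoint intervals of $R_2$, each containing one top-rung attachment and one bottom-rung attachment, with no other attachments. An iterated \Cref{ErdosSzekeres} argument (analogous to \Cref{lem:cleanshuffledropeladder} but applied to two independent orderings coming from $P_1$ and $P_3$) can reorder and thin out both rung families to appear monotonically on their respective rail sides; then a pigeonhole on the $k_1+k_2$ attachment positions on $R_2$ produces many intervals each containing exactly one top and one bottom rung. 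A final three-rail cleaning lemma yields the $k$-double ladder as an induced minor.

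The main obstacle is Step 3, specifically the \emph{simultaneous} alignment of the two sides of the middle rail while keeping the rungs pairwise non-adjacent and the middle-rail branch sets connected. The Erd\H{o}s-Szekeres trick gives us monotone orderings on each of $P_1, R_2, P_3$ individually, but turning this into a column-by-column matching on $R_2$ requires that the top and bottom attachment points on $R_2$ interleave in a controlled fashion; this likely needs a genuinely two-dimensional Ramsey-type argument. A secondary concern is that the path $R_2$ produced by the first application of \Cref{thm:somethingpath} could be much shorter than $P_2$, so the separator-hardness needed for the second application must be recovered by choosing the parameters of \Cref{thm:somethingpath} in Step 2 to be quantitatively large enough to guarantee $R_2$ inherits a usable portion of the original separator-hardness.
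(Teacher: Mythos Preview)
The statement you are attempting to prove is \emph{not} a theorem in the paper; it is \Cref{conj:doubleladder}, an open problem posed in \Cref{sec:Conclusion}. There is no proof in the paper to compare your proposal against. On the contrary, the authors explicitly discuss why their methods do not extend to the double ladder, and two of the obstacles they identify are exactly the places where your sketch breaks.

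The most serious gap is in your Step~2. You assert that the rail path $R_2$ produced by \Cref{thm:somethingpath} is ``built from vertices of $N[P_2]$ and the rung paths constructed during the iteration''. This is not what the proof of \Cref{thm:somethingpath} does. Inspecting that proof, the output rail path is either some $Q_j^{i-1}$ (in the early-termination case) or the path $P'$ pieced together from the $R^i$'s (after $\ell+1$ steps). For $i\geq 2$ all of these paths live in $G^{i-2}\subseteq G^0 = G-(N[H]\cup N[P])$, so $R_2$ is typically \emph{disjoint} from $N[P_2]$, not contained in it. Your covering argument---that a small separator between $N[R_2]$ and $N[P_3]$ would yield a small separator between $N[P_2]$ and $N[P_3]$---therefore has no basis.

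Even granting Step~2, there is a second gap you do not mention: when you apply \Cref{thm:somethingpath} the second time with $H=R_2$, nothing prevents the new rung paths (and the new rail path replacing $P_3$) from being adjacent to the \emph{old} rung paths $\Phi^i$ or to $P_1$. For a double ladder you need the top floor and the bottom floor to be non-adjacent except through the middle rail, and the proof of \Cref{thm:somethingpath} gives you no control over adjacencies to anything outside the two designated objects $H$ and $P$. This is precisely the ``distance at least~$2$'' obstacle the paper highlights in \Cref{sec:Conclusion}: in $K_{1,d}$-free graphs one cannot bound $\alpha(N_2[v])$, so the machinery that keeps objects non-adjacent does not iterate to keep them at distance~$2$. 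Your proposal does not engage with this, and without it the two rope ladders cannot be assembled into a double ladder.
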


An immediate obstacle to using our method towards a possible resolution of \cref{conj:doubleladder} is that when we find a $k$-shuffled rope ladder, we cannot guarantee that each rung path has length at least $1$.
This seems relevant due to the following intuition:
We needed to find the $k$-skinny ladder as an intermediate step to find the $k$-ladder as an induced minor.
Similarly, it seems reasonable to suspect that one should find the graph obtained by subdividing each vertical edge of the $k$-double ladder once as an intermediate step towards finally finding the $k$-double ladder.
Such a graph contains, as an induced subgraph, a graph obtained by subdividing each rung path of the $k$-ladder at least twice; a ``\textsl{$k$-double skinny ladder}'' so to speak.
However, even finding this $k$-double skinny ladder seems to be a challenge.
Essentially, the difficulty boils down to the observation that we loose control over the independence number of the second neighbor of a fixed vertex set in $K_{1,d}$-free graphs.
Hence, replacing `non-adjacent' in our proof to `distance at least $2$' would require either require stronger assumptions than just $K_{1,d}$-freeness, or much more refined arguments for $K_{1,d}$-free graphs.

Another obstacle towards possible extensions of our result is the cleaning procedure applied in the proof of \Cref{lem:cleanshuffledropeladder} that allows us to go from a shuffled rope ladder to a non-shuffled one.
For example, in case of the cycle rope ladder, we cannot enforce the order in which the neighbors of the rung paths appear on the cycle to reflect the order in which they appear on the rail path. 
This is a stark contrast to \Cref{lem:cleanshuffledropeladder} and mostly due to the fact that we need to be able to sort our overlapping ``junction areas'' between the rung paths and the cycle.
In \Cref{thm:somethingpath}, if we choose $H$ to be a more complex graph, for example a ladder, it will be much harder to find an ordered substructure between $H$ and its neighbors.
\medskip

Another possible next step toward a grid is the double wheel.
A \emph{$k$-double wheel} is the graph $C\openbox P_2$, where $C$ is a cycle on $k$ vertices.
A double wheel could be considered as the first floor of a cylindrical grid and would ``only'' require to take the $k$-ladder and somehow ``close'' it to a cycle.

\begin{conjecture}\label{conj:doublewheel}
    There exists a function $f_{\ref{conj:doublewheel}}:\mathbb{N}^2\rightarrow \mathbb{N}$ such that for every $K_{1,d}$-free graph $G$ with $\atw(G)\geq f_{\ref{conj:doublewheel}}(k,d)$ contains the $k$-double wheel as an induced minor.
\end{conjecture}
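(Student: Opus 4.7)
The plan is to adapt the two-step strategy that the paper uses for the wheel (Corollary~\ref{thm:findingcycleropeladder} combined with \Cref{thm:forbiddingthewheel}) and push it one level further. First I would try to strengthen \Cref{thm:pathandcycle} so that, instead of producing one induced path and one induced cycle whose neighborhoods are expensive to separate, it produces two pairwise non-adjacent induced cycles $C_1, C_2$ with the same separation property. Starting from a strong bramble $\mathcal{B}$ of sufficiently large $\alpha$-order, one dominates it by a path $Q$ via \Cref{lemma:dominatingpath}, and then splits $\mathcal{B}$ into two sub-brambles $\mathcal{B}_1, \mathcal{B}_2$ that are supported on disjoint subpaths $Q_1, Q_2$, each of $\alpha$-order still large enough. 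On each $Q_i$ one then applies the ``close a subpath into a cycle'' trick already used inside the proof of \Cref{thm:pathandcycle}, obtaining two induced cycles $C_1, C_2$, each of whose neighborhood is a hitting set of its own sub-bramble. The bramble intersection property then forces any separator between $N[C_1]$ and $N[C_2]$ to have large independence number, exactly as in the path/cycle case.

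With $C_1, C_2$ in hand, the next step is to apply an analogue of \Cref{thm:somethingpath} in which the fixed subgraph~$H$ is the cycle~$C_1$ and one seeks a ``cycle-cycle rope ladder'': many internally disjoint, pairwise non-adjacent induced paths whose endpoint neighborhoods sit on $C_1$ and $C_2$ respectively. The iterative construction of \Cref{thm:somethingpath} should go through essentially verbatim, because the only way it used the path structure of~$P$ was to control its closed neighborhood, which was bounded by \Cref{lem:nbdonpath}; the same bound is available for induced cycles of length at least $4$. The result is a large cycle-cycle rope ladder as an induced subgraph.

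The main obstacle, and the reason this is only a plan, is the cleaning step. The ``shuffled $\to$ ordered'' reduction of \Cref{lem:cleanshuffledropeladder} relies crucially on \Cref{cleanoneside}, which uses the existence of a \emph{linear} order on the rail path to discard prefixes or suffixes whenever many junctions concentrate. On a cycle, all one has is a cyclic order, and neighbors of distinct rung paths can interlock around both rails simultaneously; the Erd\H{o}s-Szekeres step then has no natural monotone subsequence to hand back. To repair this one needs a genuinely new combinatorial lemma, something like: given two cyclic orderings on a common index set of size $N = N(k,d)$, there exist $k$ indices that induce the same cyclic pattern on both cycles (for instance, both orientation-preserving identical). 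Such a cyclic analogue of Erd\H{o}s-Szekeres seems within reach via a Ramsey-type argument, but it is the step I expect to absorb most of the work.

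Once such a cleaned cycle-cycle rope ladder is available, extracting the $k$-double wheel as an induced minor is routine: contract each rung path to a single edge between its two endpoints, then contract the arcs of $C_1$ (respectively $C_2$) between consecutive rung attachment areas down to single vertices, exactly as in the proof of \Cref{thm:findingwheel}. A secondary obstacle, explicitly flagged by the authors in the discussion after \Cref{conj:doubleladder}, is that rung paths may have length zero and thus create unwanted chords; one would likely first target a once-subdivided version of the $k$-double wheel (the natural cyclic analogue of the skinny ladder) and only then argue, under a slightly larger $f_{\ref{conj:doublewheel}}$, that the unsubdivided $k$-double wheel itself appears.
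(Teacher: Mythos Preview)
The statement you are attempting is \Cref{conj:doublewheel}, which the paper states as an open problem in the conclusion; there is no proof in the paper to compare against. The authors in fact outline essentially the same plan you give and then explain where it breaks.

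Your first step---producing two non-adjacent induced cycles whose neighborhoods are expensive to separate---is precisely \Cref{conj:twocycles}, which the paper also leaves open. You propose to close each subpath $Q_i$ into a cycle via the bramble-element trick from the proof of \Cref{thm:pathandcycle}. The paper explicitly flags why this does not go through: the closing arc for a cycle is routed through two bramble elements $B_u,B_v$, and nothing prevents these from being adjacent to the \emph{other} cycle, whose own closing arc has already left the dominating path $Q$. In the original proof this was harmless because the second rail was just the subpath $P\subseteq Q$ and the elements $B_u,B_v\in\mathcal{B}'$ were selected to avoid $N[P]$; once both rails carry external closing arcs, that control is lost.

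Your second step is also more fragile than you claim. The iterative construction in \Cref{thm:somethingpath} does \emph{not} keep the original second rail: either it terminates early by finding $\ell$ rung paths to some intermediate path $Q_j^{i-1}$, or it runs for $\ell+1$ rounds and manufactures a brand-new rail path $P'$ by concatenating pieces of the $R^i$. In both cases the output rail is a path, irrespective of whether you fed in $P$ or a cycle $C_2$. So even granting \Cref{conj:twocycles}, a verbatim rerun of \Cref{thm:somethingpath} yields only a cycle--path rope ladder (which is exactly \Cref{thm:findingcycleropeladder}), not the cycle--cycle structure you need. The cyclic cleaning obstacle you correctly identify is real, but it sits downstream of these two earlier gaps.
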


In an ideal world, this could be achieved by adapting our methods to finding many pairwise non-adjacent paths between two long cycles instead of two long induced paths.
It seems reasonable to expect an analogue of \cref{thm:pathandcycle} that finds two induced cycles that are non-adjacent but cannot be separated by a set of small independence number.
However, it seems that one would need to adjust our arguments in a non-trivial way to achieve such a result as discussed below.

\begin{conjecture}\label{conj:twocycles}
    There exists a function $f_{\ref{conj:twocycles}}:\mathbb{N}^2\rightarrow \mathbb{N}$ such that for every $K_{1,d}$-free graph $G$ with $\atw(G)\geq f_{\ref{conj:twocycles}}(\eta,d)$ contains two induced cycles $C_1,C_2$ such that
    \begin{itemize}
        \item $C_1$ and $C_2$ are non-adjacent, and
        \item if $S$ is a separator between $C_1$ and $C_2$, then $\alpha(S)\geq \eta$.
    \end{itemize}
\end{conjecture}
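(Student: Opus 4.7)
The plan is to adapt the proof of \Cref{thm:pathandcycle} by splitting the dominating path into three pieces and producing a cycle from each of the two outer pieces instead of just one. Concretely, I would start with a strong bramble $\mathcal{B}$ of $\alpha$-order roughly $3\eta + \mathcal{O}(d)$ via \Cref{thm:alphaduality}, obtain a dominating induced path $Q = v_1 v_2 \cdots v_m$ via \Cref{lemma:dominatingpath}, and track the growth of the $\alpha$-order of the sub-brambles $\mathcal{B}^i = \{B \in \mathcal{B} \mid B \cap N[v_1 Q v_i] \neq \emptyset\}$. Using the same ``increment by at most $d-1$ per step'' argument as in \Cref{thm:pathandcycle}, I would locate indices $j_1 < j_2$ so that the left sub-bramble $\mathcal{B}^{j_1}$ and the right sub-bramble $\mathcal{B} \setminus \mathcal{B}^{j_2+1}$ each have $\alpha$-order at least $\eta + \mathcal{O}(d)$, and the buffer $v_{j_1+1} Q v_{j_2}$ separates their dominating subpaths.

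On each outer piece I would then apply the cycle-construction technique from \Cref{thm:pathandcycle}: take a minimal subpath $R_i$ whose closed neighborhood hits the corresponding sub-bramble, find two witnessing bramble elements $B_u^i, B_v^i$ whose intersection is nonempty, and close $R_i$ into an induced cycle $C_i$ by routing an internal path $R_i'$ through $B_u^i \cup B_v^i$. The separator condition would then follow by the same reasoning as in \Cref{thm:pathandcycle}: any candidate separator $S$ with $\alpha(S) < \eta$ cannot hit either sub-bramble, so there exist $B_1 \in \mathcal{B}^{j_1}$ and $B_2 \in \mathcal{B} \setminus \mathcal{B}^{j_2+1}$ missed by $S$; since $B_1 \cap B_2 \neq \emptyset$ by the strong bramble property, one can concatenate pieces of $B_1$ and $B_2$ into a path from $N[C_1]$ to $N[C_2]$ avoiding $S$, contradicting the choice of $S$.

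The main obstacle is proving that $C_1$ and $C_2$ are non-adjacent. Since $\mathcal{B}$ is a strong bramble, $B_u^1, B_v^1$ are \emph{forced} to intersect $B_u^2, B_v^2$, and nothing in the construction prevents the closing paths $R_1'$ and $R_2'$ from sharing vertices or being mutually adjacent inside $G - N[Q]$. \Cref{thm:pathandcycle} sidesteps this because it only constructs one cycle; there, the single set $B_u \cup B_v$ is allowed to be arbitrarily sprawling as long as it reconnects $R$ to itself. Here, by contrast, the natural closing regions for the two cycles can easily touch or overlap in the middle of the graph.

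Resolving this obstacle seems to require one of several nontrivial ingredients: (i) a localized form of \Cref{lemma:dominatingpath} producing a dominating path for a sub-bramble whose elements are confined to a prescribed region, so that each cycle-closing path is forced to remain near its outer subpath of $Q$; (ii) an alternative duality result yielding two mutually non-adjacent ``sub-brambles'' each of large $\alpha$-order, which would play the role of an induced analogue of Menger's theorem between two bramble-witnessed regions; or (iii) an iterative approach in the spirit of \Cref{thm:somethingpath}, first applying \Cref{thm:pathandcycle} to obtain a path-cycle pair $(P, C_1)$, then rerunning the argument inside the subgraph reachable from $P$ after deleting $N[C_1]$, provided the surviving sub-bramble still has $\alpha$-order $\geq \eta + \mathcal{O}(d)$. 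Each route faces the same underlying difficulty identified by the authors in the discussion of \Cref{conj:inducedMenger}: without an induced Menger-type theorem for $K_{1,d}$-free graphs, simultaneously producing two connected induced witnesses that are both non-adjacent and separator-robust is genuinely harder than producing only one.
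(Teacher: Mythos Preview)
The statement you are addressing is \Cref{conj:twocycles}, which the paper presents as an \emph{open conjecture}, not a theorem; there is no proof in the paper to compare against. The paper explicitly states that the natural adaptation of the proof of \Cref{thm:pathandcycle} fails, for precisely the reason you isolate: after closing one subpath into a cycle using two bramble elements, the closing path for the second cycle may be adjacent to the first cycle, because the bramble elements used to close the two paths are forced to intersect.

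Your proposal is therefore not a proof but a correct diagnosis of the difficulty, and it aligns with the paper's own discussion. You go somewhat further than the paper by listing three candidate workarounds---a localized dominating-path lemma, a non-adjacent sub-bramble duality, and an iterative deletion-and-rerun approach---none of which the paper mentions, and none of which you carry out. Each of these would indeed require a genuinely new idea (essentially an induced Menger-type statement or a bramble-localization lemma), which is consistent with the paper's remark that \Cref{conj:twocycles} appears to lie beyond the reach of the current methods. In short: your analysis is sound and matches the paper's, but neither you nor the paper resolves the conjecture.
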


Unfortunately, we cannot directly follow the proof of \Cref{thm:pathandcycle}.
In the proof of \Cref{thm:pathandcycle}, after using two bramble elements to form the first cycle, a second cycle cannot be formed by the same argument.
This is because the path connecting the endpoints of the second path might be adjacent to the first cycle.

Finally, changing the path or the cycle in \Cref{thm:pathandcycle} to some other useful graphs would be another interesting problem.

\paragraph{Acknowledgments.}
We are grateful to Maximilian Gorsky for helpful discussions and great company.

\bibliographystyle{alphaurl}
\bibliography{Reference}

\end{document}